\newtheorem{theorem}{Theorem}[section] 
\newtheorem{claim}{Claim}[theorem]
\newtheorem{lemma}[theorem]{Lemma} 
\newtheorem{observation}[theorem]{Observation} 
\newtheorem{corollary}[theorem]{Corollary} 
\newtheorem{mainthm}{Theorem}
\theoremstyle{definition}
\newtheorem{definition}[theorem]{Definition}
\newtheorem{notation}[theorem]{Notation}
\theoremstyle{remark}
\newtheorem{remark}[theorem]{Remark}
\newenvironment{PROOF}[2][\proofname.]
{\begin{proof}[#1]}
	{\end{proof}}
\DeclareMathOperator{\sym}{Sym}
\DeclareMathOperator{\fs}{FS}
\DeclareMathOperator{\otp}{otp}
\DeclareMathOperator{\dom}{dom}
\DeclareMathOperator{\ran}{Im}
\DeclareMathOperator{\acc}{acc}
\DeclareMathOperator{\pr}{Pr}
\DeclareMathOperator{\U}{U}
\newcommand{\one}{\mathop{1\hskip-3pt {\rm l}}} 
\newcommand{\diagonal}{\bigtriangleup}
\newcommand{\rest}{\mathbin\upharpoonright}
\newcommand{\mal}{\mathrel{\leq_{\rm m}}}
\newcommand{\s}{\subseteq}
\newcommand{\br}{\blacktriangleright}
\newcommand{\varp}{{\varepsilon}}
\newcommand{\tieconcat}{{}^\smallfrown}
\newcommand*\axiomfont[1]{\textsf{\textup{#1}}}
\newcommand\zfc{\axiomfont{ZFC}}
\newcommand\ch{\axiomfont{CH}}
\title {A Shelah group in ZFC}
\author {Márk Poór}
\address{Einstein Institute of Mathematics\\
Edmond J. Safra Campus, Givat Ram\\
The Hebrew University of Jerusalem\\
Jerusalem, 91904, Israel\\}
\email{sokmark@gmail.com}
\author {Assaf Rinot}
\address{Department of Mathematics, Bar-Ilan University, Ramat-Gan 5290002, Israel.}
\urladdr{http://www.assafrinot.com}
\email{rinotas@math.biu.ac.il}
\keywords{J\'onsson groups, Bergman property, strong colorings, subadditive colorings}
\subjclass[2010]{Primary 03E75, 20A15; Secondary 03E02, 20E15, 20F06}
\begin{document}
	\begin{abstract}
	In a paper from 1980,
	Shelah constructed an uncountable group all of whose proper subgroups are countable.
	Assuming the continuum hypothesis, he constructed an uncountable group $G$ that moreover admits
	an integer $n$ satisfying that for every uncountable $X\subseteq G$,
	every element of $G$ may be written as a group word of length $n$ in the elements of $X$.
The former is called a \emph{J{\'o}nsson group} and the latter is called a \emph{Shelah group}.
	
	In this paper, we construct a Shelah group on the grounds of $\zfc$ alone, that is,
	without assuming the continuum hypothesis. More generally, 
	we identify a combinatorial condition (coming from the theories of negative square-bracket partition relations and strongly unbounded subadditive maps) sufficient for the 
	construction of a Shelah group of size $\kappa$, and prove that the condition holds true for all successors of regular cardinals (such as $\kappa=\aleph_1,\aleph_2,\aleph_3,\ldots$).
This also yields the first consistent example of a Shelah group of size a limit cardinal.
	\end{abstract}
	
\date{This is a preliminary preprint as of May 18, 2023. The most updated version may be found in \textsf{http://p.assafrinot.com/60}.}
	\maketitle

\numberwithin{equation}{section}

\section{Introduction}
For a prime number $p$, the {Pr\"ufer $p$-group}
$$\{x\in \mathbb C\mid \exists n\in\mathbb N\,(x^{p^n}=1)\}$$ 
is an example of an infinite subgroup of $(\mathbb C,\cdot)$ all of whose proper subgroups are finite.
In \cite{MR571100}, Ol'\v{s}anski\u{\i} constructed the so-called \emph{Tarski monsters}
that are in particular infinite countable groups all of whose proper subgroups are finite.
Then, in \cite{Sh:69}, answering a question of Kurosh,
Shelah constructed an uncountable group all of whose proper subgroups are countable.
All of those are examples of so-called \emph{J{\'o}nsson groups}, i.e., 
an infinite group $G$ having no proper subgroups of full size.
An even more striking concept is that of a  \emph{boundedly-J{\'o}nsson group}, that is,
a group $G$ admitting a positive integer $n$ such that for every $X\s G$ of full size,
it is the case that $X^n=G$, i.e., every element of $G$ may be written as a group word of length exactly $n$ in the elements of $X$.
In \cite{Sh:69}, Shelah constructed a boundedly-J{\'o}nsson group of size $\aleph_1$
with the aid of Continuum Hypothesis ($\ch$). More generally, Shelah proved that
$2^\lambda=\lambda^+$ yields a boundedly-J{\'o}nsson group of size $\lambda^+$.
By now, the concept of boundedly-J{\'o}nsson groups is named after him:
\begin{definition}\label{nshelah}  A group $G$ is \emph{$n$-Shelah} 
if $X^n=G$ for every $X\s G$ of full size.

A group is \emph{Shelah} if it is $n$-Shelah for some positive integer $n$.
\end{definition}

Along the years, variations of this concept were studied quite intensively, and from various angles.
A group $G$ is said to be \emph{Cayley bounded} with respect to a subset $S\s G$
if there exists a positive integer $n_S$ such that $G=\bigcup_{i=1}^{n_S}(S\cup S^{-1})^i$,
i.e., every element of $G$ may be written as a group word of length at most $n_S$ in the elements of $S$ and inverses of elements of $S$.
Extending the work of Macpherson and Neumann \cite{macpherson1990subgroups},
Bergman proved \cite{MR2239037} that the permutation group $\sym(\Omega)$ of an infinite set $\Omega$ 
is Cayley bounded with respect to all of its generating sets.
Soon after, the notion \emph{Bergman property} was coined as the assertion of being Cayley bounded with respect to all generating sets.
Since then it has received a lot of attention, see \cite{MR2122432, MR2154425,  MR2266526, MR2241973, MR2332091, MR2418802,  MR2645225, MR2520386, MR2959420, MR2982769}.
More recent examples include the work of Dowerk \cite{MR4095507} on von Neumann algebras with unitary groups possessing the property of \emph{$n$-strong uncountable cofinality} (i.e.\ having a common Cayley bound $n$ for all generating sets, 
and the group is not the union of an infinite countable strictly increasing sequence of subgroups),
and Shelah's work on locally finite groups \cite{MR4186458}.
It is worth mentioning that the notion of strong uncountable cofinality has also geometric reformulations, e.g, by Cornulier \cite{cornulier2006strongly}, Pestov (see \cite[Theorem 1.2]{MR2503307}) and Rosendal \cite[Proposition 3.3]{MR2503307}.

\medskip

Shelah's 1980 construction from $\ch$ was of a $6640$-Shelah group.
It left open two independent questions:
\begin{enumerate}
\item Can $\ch$ be used to construct an $n$-Shelah group for a small number of $n$? 
\item  Is $\ch$ necessary for the construction of an $n$-Shelah group?
\end{enumerate}

Recently, in \cite{banakh2022nonpolybounded}, Banakh addressed the first question,
using $\ch$ to construct a $36$-Shelah group. 
Even more recently, Corson, Ol'\v{s}anski\u{\i} and Varghese \cite{corson2023steep} addressed the second question,
constructing the first $\zfc$ example of a J{\'o}nsson group of size $\aleph_1$
to have the Bergman property.
Unfortunately, the new example stops short from being Shelah,
as every generating set $S$ of this group has its own $n_S$.
In this paper, an affirmative answer to the second question is finally given,
where a Shelah group of size $\aleph_1$ is constructed within $\zfc$. Specifically:
\begin{mainthm}\label{thma} For every infinite regular cardinal $\lambda$, there exists a $10120$-Shelah group of size $\lambda^+$.
In particular, there exist Shelah groups of size $\aleph_1,\aleph_2,\aleph_3,\ldots$.
\end{mainthm}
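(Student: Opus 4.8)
The plan is to reduce Theorem~\ref{thma} to a purely combinatorial construction principle and then to verify that principle at every successor of a regular cardinal. First I would isolate the combinatorial object that does the work: a coloring $c\colon[\lambda^+]^2\to\lambda^+$ (or a family of such, organized via a subadditive ``walks on ordinals''-style map $\rho$) which is simultaneously \emph{strongly unbounded} and \emph{subadditive}, so that for every unbounded $X\subseteq\lambda^+$ and every prescribed color $\gamma<\lambda^+$ one can find pairs from $X$ realizing $\gamma$, with the subadditivity clause giving uniform control when one concatenates several such pairs. The standard source for such colorings is the theory of $\mathrm{Pr}_1(\lambda^+,\lambda^+,\lambda^+,\omega)$ together with the fact (due to Todorcevic, and in the sharp subadditive form developed by Lambie-Hanson--Rinot) that $\square$-like scales on successors of regulars yield strongly unbounded subadditive colorings; I would assume whatever version of this was stated earlier in the paper.

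\smallskip

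Next I would build the group $G$ of size $\lambda^+$ by a transfinite recursion of length $\lambda^+$, at each stage $\alpha<\lambda^+$ adjoining a new generator $x_\alpha$ and imposing relations dictated by the coloring: the idea, going back to Shelah's $6640$-construction, is to amalgamate at limit stages using HNN-extensions or free products with amalgamation so that the group remains, say, a union of a continuous increasing chain of subgroups of size $\le\lambda$, while the relations encode that whenever $c(\alpha,\beta)=\gamma$ the conjugation/multiplication pattern among $x_\alpha,x_\beta$ forces $x_\gamma$ (and hence, after bounded bookkeeping, every earlier generator) into a word of fixed length. The fixed length $10120$ arises from the explicit normal-form word that expresses an arbitrary target element as a product of a bounded number of the $x_\xi$'s coming from an arbitrary full-size $X$; the doubling from Shelah's $6640$ presumably reflects the need to pass through both a pair realizing the color and its inverse, plus a few slots of slack for the amalgamation bookkeeping. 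Throughout I would maintain an inductive hypothesis that the partial group has no proper subgroup of full size \emph{yet} (a ``small cancellation''/normal-form condition ensuring freeness of large subsets), so that the limit $G$ inherits it.

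\smallskip

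The verification that $G$ is $10120$-Shelah is then the heart: given $X\subseteq G$ with $|X|=\lambda^+$, I would project $X$ to the generators, extract an unbounded set $A\subseteq\lambda^+$ of indices such that each $x_\alpha$ ($\alpha\in A$) occurs in the normal form of some element of $X$ in a controlled way, apply the strong unboundedness of $c$ on $A$ to hit the index of any desired target generator, and then use subadditivity to bound uniformly the length of the word assembled from finitely many such realized pairs — this is where the single integer $n$ (independent of $X$) comes out, exactly as in the $\mathrm{Pr}_1$ paradigm where subadditivity is what upgrades ``unbounded'' to ``boundedly unbounded.'' Finally, since $\aleph_1=\aleph_0^+$, $\aleph_2=\aleph_1^+$, $\aleph_3=\aleph_2^+$ with $\aleph_0,\aleph_1,\aleph_2$ all regular, the particular instances follow at once.

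\smallskip

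The main obstacle I anticipate is not the combinatorics of $c$ — that machinery is off the shelf — but rather the \emph{group-theoretic amalgamation}: one must choose the relations so delicately that (i) the group does not collapse (the target element must be \emph{expressible} as a length-$n$ word, not \emph{forced equal} to something trivial), (ii) the word length stays bounded uniformly across all $\lambda^+$ many stages and all full-size $X$, and (iii) the no-large-proper-subgroup property is genuinely preserved at limits of cofinality $\lambda$, which is the one place where regularity of $\lambda$ (hence of $\operatorname{cf}(\lambda^+)=\lambda^+$ and the club structure on $\lambda^+$) is essential. Getting a clean normal form for the amalgamated group that is robust enough to run the counting argument — and squeezing the constant down to $10120$ — is where the real work lies.
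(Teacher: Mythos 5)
Your overall skeleton does match the paper's: a $\zfc$ coloring theorem at $\lambda^+$ coming from walks on ordinals, followed by a transfinite amalgamation construction in the style of Shelah/Hesse with small cancellation preventing collapse. But there is a genuine gap exactly at the point you defer to "bookkeeping": how an \emph{arbitrary} full-size set $Z\subseteq G$ --- whose elements are arbitrary group words, not generators, and which is of course not known at any stage of the recursion --- gets captured by relations imposed $\lambda^+$ stages earlier. Your plan to ``project $X$ to the generators'' and have the coloring hit ``the index of any desired target generator'' does not achieve this: it would at best express generators in terms of generators occurring in normal forms of elements of $X$, not express an arbitrary target $g\in G$ as a length-$n$ word in elements of $X$ \emph{themselves}. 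The paper's solution is a device absent from your proposal: at stage $\gamma$, along the filtration induced by the subadditive map $e$ (the sets $D^\gamma_{<i}=\{\beta<\gamma\mid e(\beta,\gamma)<i\}$), one fixes transversals $T^\gamma_{<i,\beta}$ for double-coset equivalence relations and imposes relators $h^{-1}\varrho(ba,b'a)$ in which $b$ is assembled from a transversal element and auxiliary elements $y_0,y_1,d$ and a sign that are \emph{decoded from the color} $c_1(\alpha,\gamma)$ via a fixed surjection $\vec q:\kappa\to{}^3\kappa\times\{\pm1\}$, while $h$ is the color $c_0(\alpha,\gamma)$. In the verification one writes $z_\beta=y_0t^{\pm1}y_1$ with $t$ a transversal representative, and the joint hitting property supplies a single $\alpha$ with $c_0(\alpha,\beta)=g$, $c_1(\alpha,\beta)$ equal to the code of $(y_0,y_1,h,\pm1)$, and $e(\alpha,\beta)$ large; the prepared relator then literally yields $g=\varrho(z_\beta h z_\alpha,\,z_\beta h z_\beta h z_\alpha)$, a word of length $3240\cdot3+80\cdot5=10120$ in elements of $Z$.

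Two further points where your mechanism is off. First, the uniform bound does not come from subadditivity ``upgrading unbounded to boundedly unbounded'': it comes from the fixed relator word $\varrho(x,y)=xyx^2y\cdots x^{80}y$ and the substitution above (so in particular $10120$ is not a doubling of $6640$). Subadditivity of $e$ serves a different purpose: it builds the coherent filtration $\langle G_{D^\gamma_{<i}}\mid i<\theta\rangle$ along which the amalgamations of Lemma~\ref{shhe} are performed, and in the verification the requirement $e(\alpha,\beta)>i^\beta_h$ guarantees that $z_\alpha$ lies in the correct member of the filtration at $\beta$, so that the relator applies to it. Second, the key combinatorial input is not one coloring that is both unbounded and subadditive, but \emph{two} maps --- an onto-type square-bracket coloring and a subadditive strongly unbounded one --- that can be triggered \emph{simultaneously} on the same pair $(\alpha,\beta)$; establishing this simultaneity (Theorem~\ref{successorofregular}) is itself a nontrivial step, not off-the-shelf. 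Without the transversal/encoding idea and the correct division of labor between $c$ and $e$, the proposal does not assemble into a proof.
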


The proof of Theorem~\ref{thma} reflects advances both in small cancellation theory and in the study of infinite Ramsey theory.
Towards it,
we prove a far-reaching extension
of Hesse's amalgamation lemma,
and we obtain two maps, one coming from the theory of negative square-bracket partition relations,
the other coming from the theory of strongly subadditive functions,
and the two maps have the property that they may be triggered simultaneously,
making them `active' over each other.

The connection to infinite Ramsey theory should not come as a surprise.
First, note that an $n$-Shelah group of size $\aleph_0$ does not exist,
since such a group would have induced a coloring
$c:[\mathbb N]^n\rightarrow k$ for a large enough integer $k$
admitting no infinite homogeneous set,\footnote{See the proof of Corollary~\ref{cor514}.}
in particular contradicting Ramsey's theorem $\aleph_0\rightarrow(\aleph_0)^n_k$.
A deeper connection to (additive) Ramsey theory is 
in the fact that the existence of J{\'o}nsson group of size $\kappa$
is equivalent to a very strong failure of the higher analog of Hindman's finite sums theorem \cite{MR0349574}.
Indeed, by \cite[Corollary~2.8]{paper27}, 
if there exists a J{\'o}nsson group of size $\kappa$, 
then for \emph{every} Abelian group $G$ of size $\kappa$,
there exists a map $c:G\rightarrow G$ such that for every $X\s G$ of full size,
$c\restriction\fs(X)$ is onto $G$, i.e.,
$$\{ c(x_1+\cdots+x_n)\mid n\in\mathbb N,~\{x_1,\ldots,x_n\}\in[X]^n\}=G.$$
Conversely, if $G$ is an Abelian group of size $\kappa$
admitting a map $c:G\rightarrow G$ as above, then the structure $(G,+,c)$
is easily an example of a so-called \emph{J{\'o}nsson algebra} \cite{MR0345895}
of size $\kappa$, which by Corson's work \cite{MR4428866} implies the existence of a J{\'o}nsson group of size $\kappa$.

The fact that the elimination of $\ch$ goes through advances in the theory of partition calculus of uncountable cardinals 
should not come as a surprise, either. To give just one example,
we mention that that three decades after 
Juh\'{a}sz and Hajnal \cite{MR336705} constructed an $L$-space  with the aid of $\ch$,
Moore \cite{Moore} gave a $\zfc$
construction of an $L$-space by establishing a new unbalanced partition relation for the first uncountable cardinal.

\medskip

Having discussed Shelah groups of size $\aleph_0$ and of size a successor cardinal, the next question is whether 
it is possible to construct a Shelah group of size an uncountable \emph{limit} cardinal. 
To compare, a natural ingredient available for transfinite constructions of length 
a successor cardinal $\kappa=\lambda^+$ 
is the existence of $\lambda$-filtrations of all ordinals less than $\kappa$.
We overcome this obstruction at the level of a limit cardinal $\kappa$ by employing subadditive strongly unbounded maps 
$e:[\kappa]^2\rightarrow\lambda$ having arbitrarily large gaps between $\lambda$ and $\kappa$.
This way, we obtain the first consistent example of a Shelah group of size a limit cardinal. More generally:

\begin{mainthm}
For every regular uncountable cardinal $\kappa$ satisfying the combinatorial principle $\square(\kappa)$, there exists a Shelah group of size $\kappa$.
\end{mainthm}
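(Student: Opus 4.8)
The plan is to reduce the assertion to the combinatorial condition identified in this paper (the one shown, in the proof of Theorem~\ref{thma}, to hold at every successor of a regular cardinal). Recall that that condition asks, for the cardinal $\kappa$ at hand, for a pair consisting of (i)~an infinite cardinal $\lambda<\kappa$ together with a subadditive, strongly unbounded map $e\colon[\kappa]^2\to\lambda$, and (ii)~a strong coloring $c\colon[\kappa]^2\to\kappa$ witnessing a negative square-bracket partition relation on $\kappa$, with the extra feature that the two maps are \emph{active over each other}: on any configuration on which the value of $e$ has already been pinned down, the coloring $c$ can still be driven so as to realize every prescribed color, and symmetrically. Once such a pair is produced, the extension of Hesse's amalgamation lemma together with the group construction carried out in this paper deliver a Shelah group of size $\kappa$ (in fact a $10120$-Shelah group, the criterion being the same as for Theorem~\ref{thma}). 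Thus it suffices to manufacture, from $\square(\kappa)$, such a pair $(e,c)$.

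Fix a $\square(\kappa)$-sequence $\vec C=\langle C_\alpha:\alpha<\kappa\rangle$, and fix an auxiliary regular cardinal $\lambda<\kappa$; one may take $\lambda=\aleph_0$, or — when $\kappa$ is a (necessarily weakly inaccessible) limit cardinal — any regular cardinal strictly below $\kappa$ whatsoever, and it is exactly this latitude in the size of the gap between $\lambda$ and $\kappa$ that removes the obstruction flagged in the introduction and yields Shelah groups at limit cardinals. From $\vec C$ one runs Todorcevic's walks on ordinals and reads off the standard characteristics. Ingredient~(i) is then provided by the known consequence of $\square(\kappa)$ that, for every regular $\theta<\kappa$, there exists a subadditive and strongly unbounded map $[\kappa]^2\to\theta$; apply it with $\theta:=\lambda$. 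The raw form of ingredient~(ii) is provided by another theorem of Todorcevic, namely that $\square(\kappa)$ implies $\pr_1(\kappa,\kappa,\kappa,\omega)$, and in particular $\kappa\nrightarrow[\kappa]^2_\kappa$.

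The substantive point is to obtain (i) and (ii) \emph{coherently}, so that the resulting $e$ and $c$ are active over one another. The natural route is to define both from the \emph{same} walk along $\vec C$ — say $e$ from the maximal weight visited by the walk and $c$ from an oscillation computed on its lower trace — and then to exploit the coherence of $\vec C$ to show that, upon restricting to pairs whose walk is routed through a prescribed ordinal, the value of $e$ is frozen while $c$ remains free to attain all of $\kappa$. Verifying this follows the template of the $\pr_1$-from-$\square$ proofs: fix a club of elementary submodels of a suitable $H(\chi)$, thin out to a stationary set of approachable ordinals that reflect the given configuration, and run a pigeonhole over the finitely many possible walk traces; the novelty is the additional bookkeeping needed to keep the weights — hence the values of $e$ — under control throughout that pigeonhole.

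I expect this ``simultaneity'' step to be the main obstacle. Subadditivity of $e$ imposes a triangle-inequality rigidity that, a priori, competes with the freedom required to make $c$ rainbow; overcoming it means decoupling the value of the strong coloring from that of the subadditive map \emph{along a single walk}, uniformly over cofinally many inputs. Granting that, the remainder is a routine appeal to the amalgamation lemma and the group-theoretic machinery already in place, now fed with the pair $(e,c)$ just built, which produces the desired Shelah group of size $\kappa$.
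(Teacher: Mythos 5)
Your overall architecture is the paper's: reduce Theorem~B to the existence of a pair $(c,e)$ with $e$ subadditive and the two maps ``active over each other'', and then feed that pair into the amalgamation lemma and the recursive group construction (Theorem~\ref{thm41}). The gap is that you never actually produce the pair from $\square(\kappa)$: the ``simultaneity'' step, which you yourself flag as the main obstacle and defer (``Granting that\dots''), is precisely the content of Theorem~\ref{generalcase}, so the heart of the proof is missing. Moreover, the route you sketch for it is not the one that works in the paper, and your two guiding intuitions point the wrong way. First, the paper does \emph{not} build $c$ and $e$ from a single walk and then fight to decouple them; it takes two off-the-shelf colorings as black boxes, both consequences of $\square(\kappa)$: a coloring $d$ witnessing $\pr_1(\kappa,\kappa,\kappa,\theta^+)$ from \cite{paper52}, and a subadditive witness $e$ to $\U(\kappa,2,\theta,2)$ from \cite{paper36} (after reducing to the case $\theta^+<\kappa$ via Theorem~\ref{successorofregular}). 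Second, subadditivity is not a rigidity to be overcome but the very mechanism that yields simultaneity: given $A$, one shows (using only that $e$ witnesses $\U(\kappa,2,\theta,2)$, plus subadditivity) that for club many pivots $\gamma$ there is $a_\gamma\in[A\cap\gamma]^\theta$ with $e[a_\gamma\times\{\gamma\}]$ cofinal in $\theta$; the $\pr_1$ property applied to the disjoint family $\{a_\gamma\}$ makes $d$ constantly equal to a prescribed $\tau$ on some $a_\gamma\times\{\beta\}$ with $a_\gamma\subseteq\delta$; then choosing $\alpha\in a_\gamma$ with $e(\alpha,\gamma)>\max\{i,e(\gamma,\beta)\}$, subadditivity forces $e(\alpha,\beta)>i$ while $c(\alpha,\beta)$ is the prescribed pair. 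So one never needs to control $e(\alpha,\beta)$ and $c(\alpha,\beta)$ ``along a single walk'' at all.

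A second, concrete defect: you propose to use Todorcevic's $\pr_1(\kappa,\kappa,\kappa,\omega)$ as the raw form of ingredient~(ii). That relation only uniformizes $d$ on rectangles with \emph{finite} sides, which is useless for the argument above when $\theta>\omega$ (and even for $\theta=\omega$ one needs sides of size $\theta$, not $<\omega$): the whole point of the pivot trick is that the uniformized side $a_\gamma$ has size $\theta$, so that $e(\cdot,\gamma)$ can be made to exceed any prescribed $i<\theta$ \emph{inside} the uniformized set. This is why the paper invokes the stronger, recent result $\pr_1(\kappa,\kappa,\kappa,\theta^+)$ of \cite{paper52} rather than the classical $\omega$-version. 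Without either that strengthening or a genuinely new verification of your ``same-walk'' coherence scheme, the proposal does not close.
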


By a seminal work of Jensen \cite{jensen72},
in G\"odel's constructible universe \cite{MR2514}, the combinatorial principle $\square(\kappa)$ holds for every regular uncountable cardinal $\kappa$ that is not weakly compact.
As the reader may anticipate, a cardinal $\kappa$ is \emph{weakly compact} if it is a regular limit cardinal satisfying the higher analog of Ramsey's theorem $\kappa\rightarrow(\kappa)^2_2$ .
Altogether, we arrive at the following optimal result:

\begin{mainthm} In G\"odel's constructible universe, for every regular uncountable cardinal $\kappa$, the following are equivalent:
\begin{itemize}
\item There exists a Shelah group of size $\kappa$;
\item Ramsey's partition relation $\kappa\rightarrow(\kappa)^2_2$ fails.
\end{itemize}
\end{mainthm}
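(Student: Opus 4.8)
The plan is to prove Theorem C as an immediate corollary of Theorems A and B together with Jensen's work and the definition of weak compactness, so the proof is essentially a bookkeeping exercise combining already-established implications. First I would fix a regular uncountable cardinal $\kappa$ in $L$. For the direction from the failure of $\kappa\rightarrow(\kappa)^2_2$ to the existence of a Shelah group: if $\kappa$ is a successor cardinal, say $\kappa=\lambda^+$ with $\lambda$ regular (note that in $L$, and indeed in $\zfc$, a successor cardinal is the successor of a regular cardinal unless it is the successor of a singular, but the hypothesis is that $\kappa$ is regular, so $\kappa=\lambda^+$ automatically forces $\lambda$ to be regular by standard cardinal arithmetic), then Theorem~\ref{thma} directly supplies a $10120$-Shelah group of size $\kappa$, and we are done without even invoking the partition hypothesis. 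If $\kappa$ is a regular limit cardinal, then by definition $\kappa$ fails to be weakly compact precisely when $\kappa\rightarrow(\kappa)^2_2$ fails (a regular limit cardinal satisfying the Ramsey relation is weakly compact, and conversely); hence our hypothesis tells us $\kappa$ is not weakly compact, so by Jensen's theorem \cite{jensen72} the principle $\square(\kappa)$ holds in $L$, and Theorem~B then produces a Shelah group of size $\kappa$.

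For the converse direction, suppose $\kappa\rightarrow(\kappa)^2_2$ holds. Then in particular $\kappa$ is a regular limit cardinal (an uncountable successor $\lambda^+$ never satisfies $\lambda^+\rightarrow(\lambda^+)^2_2$, by the Sierpi\'nski coloring, so the Ramsey relation forces $\kappa$ to be a limit; regularity is part of our standing assumption), i.e.\ $\kappa$ is weakly compact. I would then argue that a weakly compact cardinal admits no Shelah group, using the Ramsey-theoretic obstruction already flagged in the introduction: an $n$-Shelah group of size $\kappa$ induces a coloring $c:[\kappa]^n\rightarrow k$ for some finite $k$ with no homogeneous set of size $\kappa$ (this is the content of the argument sketched for Corollary~\ref{cor514}, transplanted from $\aleph_0$ to $\kappa$), which contradicts the fact that weak compactness gives $\kappa\rightarrow(\kappa)^n_k$ for all finite $n,k$ — the higher-exponent, finitely-many-colors partition relation being a standard consequence of $\kappa\rightarrow(\kappa)^2_2$ for weakly compact $\kappa$ via the usual stepping-up argument.

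The one genuinely substantive point — and the step I expect to need the most care — is verifying that the finite-coloring obstruction to Shelah groups of size $\aleph_0$ lifts verbatim to an arbitrary infinite cardinal $\kappa$: namely that if $G$ is an $n$-Shelah group of size $\kappa$ then there is a finite $k$ and a coloring $c:[\kappa]^n\rightarrow k$ (after identifying $G$ with $\kappa$) admitting no homogeneous set of full size. This amounts to checking that the combinatorial extraction in the proof of Corollary~\ref{cor514} is insensitive to the cardinality of the ground set, using only that a homogeneous set of size $\kappa$ would be a full-size subset $X$ with $X^n\neq G$. Granting that, together with the implication $\kappa\rightarrow(\kappa)^2_2 \Rightarrow \kappa\rightarrow(\kappa)^n_k$, the contradiction is immediate and the equivalence closes. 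I would present the whole argument compactly: one short paragraph for each direction, citing Theorems~A and~B, \cite{jensen72}, and the characterization of weak compactness, with the cardinal-arithmetic remarks (successor of a regular, limit cardinal, etc.) handled inline.
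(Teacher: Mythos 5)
Your first direction contains a genuine error. You assert that since $\kappa$ is regular, $\kappa=\lambda^+$ forces $\lambda$ to be regular; this is false, because \emph{every} successor cardinal is regular, so regularity of $\lambda^+$ gives no information about $\lambda$. In $L$ the cardinal $\aleph_{\omega+1}$ is regular and is the successor of the singular $\aleph_\omega$, and for such $\kappa$ Theorem~\ref{thma} simply does not apply (it requires $\lambda$ regular, and the Remark following Theorem~\ref{successorofregular} explains why the underlying combinatorial condition can consistently fail when $\lambda$ is singular). So your case analysis leaves the successors of singular cardinals uncovered. The repair is to drop the case split altogether, which is what the paper does: a successor cardinal is never weakly compact (it is not a limit cardinal), so by Jensen's dichotomy in $L$ every regular uncountable non-weakly-compact $\kappa$ — successor or limit — satisfies $\square(\kappa)$, and Theorem~B (i.e.\ Theorem~\ref{thm41} via Theorem~\ref{generalcase}) yields the Shelah group uniformly; Theorem~\ref{thma} is not needed for Theorem~C at all. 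Your translation between ``$\kappa\nrightarrow(\kappa)^2_2$'' and ``not weakly compact'' (Sierpi\'nski at successors, the definition at regular limits) is fine.

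Your second direction follows the paper's route in outline, but the step you yourself flag as the substantive one — producing from an $n$-Shelah group of size $\kappa$ a coloring into finitely many colors with no homogeneous set of size $\kappa$ — is left as an unverified promise, and it is the heart of that implication. For comparison, the paper's Corollary~\ref{cor514} does it as follows: identify $G$ with $\kappa$; for each of the $n^n$ patterns $j:n\to n$ let $f_j(u)$ be the corresponding length-$n$ product of the elements of $u\in[\kappa]^n$, so that $Z^n=\bigcup_{j<n^n}f_j``[Z]^n$ for every $Z\in[\kappa]^\kappa$; truncate by setting $g_j(u)=f_j(u)$ if $f_j(u)<n^n$ and $g_j(u)=n^n$ otherwise; and let $c(u)=\langle g_j(u)\mid j<n^n\rangle$, a coloring into the finite set ${}^{n^n}(n^n+1)$. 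Weak compactness gives a $c$-homogeneous $Z\in[\kappa]^\kappa$; the constant value is a sequence of length $n^n$ with entries in $n^n+1$, so some $m\le n^n$ is missing from it, whence $m\notin Z^n$, contradicting $Z^n=G=\kappa$. So your intuition that the obstruction is ``insensitive to the cardinality of the ground set'' is correct, but as written your proof delegates exactly this construction to a black box rather than carrying it out.
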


We conclude the introduction by discussing additional features that the groups constructed here possesses.
A group is said to be \emph{topologizable} if it admits a nondiscrete Hausdorff group topology; otherwise, it is \emph{nontopologizable}. 
The first consistent instance for a nontopologizable group was the group constructed by Shelah in \cite{Sh:69} using $\ch$.
Shortly after, an uncountable $\zfc$  example was given by Hesse \cite{hesse1979topologisierbarkeit}. Then a countable such group was given by Ol'\v{s}anski\u{\i}  \cite[Theorem~31.5]{ol2012geometry} (an account of his construction may be found in \cite[\S13.4]{adian2006classifications}). Ol'\v{s}anski\u{\i}'s group is periodic; a torsion-free example  was given by Klyachko and Trofimov in \cite{KlyachkoTrofimov}.

The group we construct in this paper is nontopologizable,
which follows combining the property of Shelah-ness together with the fact that there will be a filtration of the group consisting of malnormal subgroups.
Moreover, our group contains a nonalgebraic unconditionally closed set, which can be shown by proving that small sets can be covered by a topologizable  subgroup, similarly to the argument by Sipacheva \cite[Lemmas 1 and A.4]{sipacheva2006consistent}.

\subsection{Organization of this paper}
In Section~\ref{sec2}, 
we fix our notations and conventions,
and provide some necessary background from small cancellation theory,

In Section~\ref{pumpupsection},
we prove an amalgamation lemma that will serve as a building block in our recursive construction in Section~\ref{sec4}.

In Section~\ref{infcombinatorics}, we provide set-theoretic sufficient conditions for 
the existence of two types of maps to exist, and moreover be active over each other.
The first type comes from the classical theory of negative square-bracket partition relations \cite[\S18]{MR202613},
and enables to eliminate the need for $\ch$ in the construction of a Shelah group of size $\aleph_1$. 
The second type comes from the theory of subadditive strongly unbounded functions \cite{paper36},
and enables to push the construction to higher cardinals including limit cardinals.
At the level of successors of regulars, both of these colorings are obtained in $\zfc$ using the method of \emph{walks on ordinals} \cite{todorcevic_book} that did not exist at the time Shelah's paper \cite{Sh:69} was written.

In Section~\ref{sec4}, we provide a transfinite construction of Shelah groups guided by the colorings given by Section~\ref{infcombinatorics}.

\section{Preliminaries}\label{sec2}

\subsection{Notations and conventions}
	Under ordinals we always mean von Neumann ordinals, and for a set $X$ the symbol $|X|$ always refers to the smallest ordinal with the same cardinality.  For a set $X$ the symbol $\mathcal P(X)$ denotes the power set of $X$, while if $\theta$ is an cardinal we use the standard notation $[X]^\theta$ for $\{Y \in \mathcal P(X)\mid |Y| = \theta\}$, similarly for $[X]^{<\theta}$, $[X]^{\leq \theta}$, etc.~By a sequence we mean a function on an ordinal, where for a sequence $ \overline{s}= \langle s_\alpha\mid \alpha < \dom(\overline{s}) \rangle$ the length of $\overline{s}$ (in symbols $\ell(\overline{s})$) denotes $\dom(\overline{s})$. We denote the empty sequence by $\langle\rangle$. Moreover, for sequences $\overline{s}$, $\overline{t}$, we  let $\overline{s} \tieconcat \overline{t}$ denote the natural concatenation of them (of length $\ell(\overline{s}) + \ell(\overline{t})$).
	For a set $X$, and ordinal $\alpha$ we use	$^\alpha X = \{ \overline{s}\mid \ell(\overline{s}) =  \alpha,~\ran(\overline{s}) \subseteq X\}$.

\subsection{Small cancellation theory}

	The main algebraic tool we are going to use is small cancellation theory.
	In this regard the paper is self-contained, but for more details and proofs
 the interested reader can consult \cite{lyndon1977combinatorial} and  \cite[§1]{Sh:69}.
	
	\begin{definition} \label{d21}
Given groups $H,K,L$ such that $K\cap L = H$ (as sets), in particular $H \leq K,L$, then one constructs the free amalgamation of $K$ and $L$ over $H$ as
		\[ K *_H L = F_{K \cup L} / N, \] 
		where $F_{K \cup L}$ is the free group generated by the elements of $K \cup L$,
		and
		\[ N = \ < E_K \cup E_L >^{K *_H L}, \]
		i.e., $N$ is the normal subgroup generated by $E_K \cup E_L$, where for $G \in \{K,L\}$,
		\[E_G = \{g_1g_2g_3^{-1}\mid g_1,g_2,g_3 \in G, ~g_1g_2 = g_3\}.\]
	\end{definition}
	We invoke basic results about the structure of groups of the form $ K *_H L $.
	\begin{definition}
		If $g = g^*_0 g^*_1 \ldots g^*_{n-1} \in   K *_H L$,
		where $g^*_i \in K \cup L$, then we call the sequence of $g^*_i$'s the \emph{canonical form} of the group element of $g$, if
		\begin{itemize}
			\item either $n = 1$, or
			\item $n>1$, and for each $i<n$ 
			\begin{enumerate}
				\item $g^*_i \notin H$,
				\item $i+1 <n$ $\to$ ($g^*_i \in K$ $ \iff$ $g^*_{i+1} \in L$),
			\end{enumerate}
		\end{itemize}
	\end{definition}
	
	The canonical form is unique in the following sense.
	\begin{lemma} \label{unie}
		Suppose that $ g^*_0 g^*_1 \ldots g^*_{n-1} =  g^{**}_0 g^{**}_1 \ldots g^{**}_{m-1} \in  K *_H L$ are canonical representations of the same element. Then $n = m$, and there exist
		$h_0, h_1, h_2, \ldots, h_{n} \in H$ with $h_0=h_n = \one$, and 
		\[ (\forall i <n)[g^{**}_i = h^{-1}_i g^*_i h_{i+1}].\]
	\end{lemma}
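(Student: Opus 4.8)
The plan is to run the classical van der Waerden permutation argument, adapted to the presentation $K*_H L=F_{K\cup L}/N$ fixed above. First I would fix a transversal $T_K\s K$ for the left cosets $\{kH\mid k\in K\}$ and a transversal $T_L\s L$ for $\{lH\mid l\in L\}$, each containing $\one$; thus every $k\in K$ factors uniquely as $k=h\cdot t_K(k)$ with $h\in H$ and $t_K(k)\in T_K$, and $t_K(k)=\one$ exactly when $k\in H$ (symmetrically for $L$), while $T_K\cap T_L=\{\one\}$. Let $\Omega$ be the set of all tuples $\langle h\rangle\tieconcat\langle s_0,\dots,s_{n-1}\rangle$ with $h\in H$, $n\in\mathbb N$, each $s_i\in(T_K\cup T_L)\setminus\{\one\}$, and no two consecutive entries lying in the same transversal (the ``formal normal forms''). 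For $k\in K$, define $\rho_K(k)\colon\Omega\to\Omega$ by prepending $k$ and renormalizing: on $\omega=\langle h,s_0,\dots,s_{n-1}\rangle$, if $n\geq1$ and $s_0\in T_K$, compute $khs_0\in K$, write it as $h'\cdot t$ with $t=t_K(khs_0)$, and output $\langle h',t,s_1,\dots,s_{n-1}\rangle$ if $t\neq\one$ and $\langle h',s_1,\dots,s_{n-1}\rangle$ if $t=\one$; otherwise ($n=0$, or $s_0\in T_L$), compute $kh\in K$, write it as $h'\cdot t$ with $t=t_K(kh)$, and output $\langle h',t\rangle\tieconcat\langle s_0,\dots,s_{n-1}\rangle$ if $t\neq\one$ and $\langle h'\rangle\tieconcat\langle s_0,\dots,s_{n-1}\rangle$ if $t=\one$. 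Define $\rho_L(l)$ symmetrically. Keeping the $H$-coordinate at the front is exactly what stops the renormalization from cascading.

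The next step is to verify that $\rho_K\colon K\to\sym(\Omega)$ and $\rho_L\colon L\to\sym(\Omega)$ are group homomorphisms. This is the crux: one checks $\rho_K(k_1)\circ\rho_K(k_2)=\rho_K(k_1k_2)$ by a case analysis according to where the leading entry of the argument sits and whether each renormalized syllable comes out trivial, using only associativity in $K$ and the uniqueness of $k=h\cdot t_K(k)$; since $\rho_K(\one)$ is the identity, each $\rho_K(k)$ is then a permutation. One also computes directly that $\rho_K(h_0)(\langle h,s_0,\dots\rangle)=\langle h_0h,s_0,\dots\rangle=\rho_L(h_0)(\langle h,s_0,\dots\rangle)$ for $h_0\in H$, so $\rho_K$ and $\rho_L$ agree on $H=K\cap L$ and combine into a single function $K\cup L\to\sym(\Omega)$ whose restrictions to $K$ and to $L$ are homomorphisms. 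By the universal property of the free group this function extends to a homomorphism $F_{K\cup L}\to\sym(\Omega)$; it annihilates $E_K$ and $E_L$, hence the normal subgroup $N$, and so descends to a homomorphism $\rho\colon K*_H L\to\sym(\Omega)$.

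The conclusion is then read off by evaluating $\rho$ at the empty tuple $\omega_0:=\langle\one\rangle$. Given canonical representations $g^*_0g^*_1\cdots g^*_{n-1}=g^{**}_0g^{**}_1\cdots g^{**}_{m-1}$ of a common $g\in K*_H L$, compute $\rho(g)(\omega_0)=\rho(g^*_0)(\cdots\rho(g^*_{n-1})(\omega_0)\cdots)$ from the inside out. When $n\geq2$ every $g^*_i$ lies outside $H$ and consecutive ones alternate factors, so no renormalized syllable is ever trivial, and induction on $i$ shows that after applying $\rho(g^*_i)$ one reaches $\langle h'_i\rangle\tieconcat\langle t_i,\dots,t_{n-1}\rangle$ with $h'_i\in H$ and $t_i$ the transversal representative extracted from $g^*_ih'_{i+1}$, so $t_i=(h'_i)^{-1}g^*_ih'_{i+1}$ for every $i<n$ (convention $h'_n:=\one$); the case $n=1$ is handled directly, giving $\langle g^*_0\rangle$ if $g^*_0\in H$ and $\langle h'_0,t_0\rangle$ with $t_0=(h'_0)^{-1}g^*_0$ otherwise. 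The same computation on the double-starred word gives $\langle h''_0\rangle\tieconcat\langle t'_0,\dots,t'_{m-1}\rangle$ with $t'_i=(h''_i)^{-1}g^{**}_ih''_{i+1}$. As $\rho(g)$ depends only on $g$, these tuples coincide: comparing their lengths yields $n=m$ (the length is $1$ precisely in the degenerate case $n=1$, $g^*_0\in H$, and is $n+1$ otherwise), comparing their $H$-coordinates yields $h'_0=h''_0$, and comparing entries yields $t_i=t'_i$ for all $i<n$; since each such entry is nontrivial and $T_K\cap T_L=\{\one\}$, the syllables $g^*_i$ and $g^{**}_i$ lie in the same free factor, so the desired identity is meaningful. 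Setting $h_n:=\one$ and $h_i:=h'_i(h''_i)^{-1}\in H$ for $i<n$, we get $h_0=\one$ from $h'_0=h''_0$, and solving $(h'_i)^{-1}g^*_ih'_{i+1}=t_i=t'_i=(h''_i)^{-1}g^{**}_ih''_{i+1}$ for $g^{**}_i$ gives precisely $g^{**}_i=h_i^{-1}g^*_ih_{i+1}$ for every $i<n$; in the degenerate case $n=1$, $g^*_0\in H$, the identity $\langle g^*_0\rangle=\langle g^{**}_0\rangle$ forces $g^*_0=g^{**}_0$ and $h_0=h_1=\one$ suffices.

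The step I expect to be the main obstacle is the second paragraph: establishing that $\rho_K$ and $\rho_L$ are genuine homomorphisms requires a careful, mildly tedious check that the renormalization operation is associative, with attention to the degenerate subcases --- a syllable that already lies in $H$, a renormalized product that falls back into $H$, and the empty tail. Everything after $\rho$ is constructed is routine bookkeeping.
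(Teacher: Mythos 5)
Your argument is correct. One point of comparison to flag first: the paper does not prove Lemma~\ref{unie} at all --- it is invoked as one of the ``basic results about the structure of groups of the form $K*_H L$'', with the reader referred to Lyndon--Schupp and to \S1 of Shelah's paper --- so there is no in-paper proof to measure yours against; what you have written is essentially the classical Artin--van der Waerden permutation-representation proof of the normal form theorem, which is also how the cited sources establish it. Your construction of $\Omega$, the prepending actions $\rho_K,\rho_L$, the verification that they are homomorphisms agreeing on $H$, the descent through $F_{K\cup L}/N$, and the inside-out evaluation at $\langle\one\rangle$ (with $h'_n=\one$ and $t_i=(h'_i)^{-1}g^*_ih'_{i+1}$, all $t_i\neq\one$ because $g^*_i\notin H$) are all sound, and the extraction of $n=m$ and $h_i:=h'_i(h''_i)^{-1}$ with $h_0=h_n=\one$ is exactly what the lemma asks for; the degenerate case $n=1$, including the possibility $g^*_0\in H$, is handled correctly, and as a by-product your $\rho$ also gives the embedding facts used in Corollary~\ref{smacor}. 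The only blemish is cosmetic: you call $T_K$ a transversal for the \emph{left} cosets $kH$ but then use the factorization $k=h\cdot t_K(k)$ with $h\in H$ on the left, which is the unique factorization coming from a transversal of the \emph{right} cosets $Hk$; since you use the latter consistently throughout, nothing breaks, but the wording should be fixed. Your own assessment of where the labor lies (the multiplicativity check for $\rho_K$, $\rho_L$) is accurate, and the case analysis you sketch does go through using only associativity in $K$ (resp.\ $L$) and uniqueness of the factorization.
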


	\begin{definition} \label{dfmaln}
		Let $H \leq L$ be a pair of groups. We say that $H$ is a \emph{malnormal} subgroup of $L$, and denote it by $H \mal L$, if 
		\[ (\forall h \in H \setminus \{\one\})(\forall g \in L \setminus H)[g^{-1}hg \notin H].\]
	\end{definition}
	Note that $\mal$ is a transitive relation.
	
	\begin{definition}
		Fix $g  \in   K *_H L$ distinct from $\one$, and the canonical representation $g = g^*_0 g^*_1 \ldots g^*_{n-1}$. We say that $g^*_0 g^*_1 \ldots g^*_{n-1}$ is \emph{weakly cyclically reduced} if
		\begin{itemize}
			\item $n=1$, or 
			\item $n$ is even, or
			\item $g^*_{n-1}g^*_0 \notin H$, equivalently, $g$ has no conjugate that has a canonical representation shorter than $n-1$.
		\end{itemize}
	\end{definition}
	\begin{observation}\hfill \label{obsa}
			\begin{enumerate}
			\item 	If $g^*_0 g^*_1 \ldots g^*_{n-1}$ is a canonical representation
			of an element $g\neq \one$, $n\geq 2$, then $g$ has a conjugate $g'$ that has a canonical representation of length $m=1$, or $m=2k$ for some $k \geq 1$. Moreover, each conjugate $g''$ of $g$ has length at least $m$.
			\item \label{obsa2}	If $g^*_0 g^*_1 \ldots g^*_{n-1}$ is a canonical representation of an element $g\neq\one$, $n$ is even, and $g'$ is a weakly cyclically reduced conjugate of $g$, then $g'$ has a canonical representation in the following form:
			\[ g' = x'_i g^*_{i+1} g^*_{i+2} \ldots g^*_{n-1} g^*_{0} \ldots g^*_{i-1} x''_i, \]
			where:
			\begin{itemize}
			\item for all $g^*_i \in K$,  $x'_i, x''_i \in K$ and $K \models x''_ix'_i = g^*_i$,
			\item for all $g^*_i \in L$, $x'_i, x''_i \in L$ and $L \models x''_ix'_i = g^*_i$.
			\end{itemize}
		\end{enumerate}
	\end{observation}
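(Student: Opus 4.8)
The plan is to deduce Observation~\ref{obsa} from the normal form theorem recorded in Lemma~\ref{unie}, following the classical treatment of conjugacy in a free product with amalgamation.

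For the first assertion of clause~(1) I would induct on $n$. If $n$ is even there is nothing to do: take $g'=g$ and $m=n$. If $n$ is odd then $n\geq3$, and by the alternation requirement in the definition of the canonical form the letters $g^*_0$ and $g^*_{n-1}$ lie in a common factor $G\in\{K,L\}$. Put $g^{**}:=g^*_{n-1}g^*_0$, the product computed inside $G$, and pass to the conjugate $\tilde g:=(g^*_0)^{-1}g\,g^*_0=g^*_1g^*_2\cdots g^*_{n-2}g^{**}$. If $g^{**}\notin H$ then, since $g^*_{n-2}$ lies in the factor other than $G$, the displayed word is a canonical representation of $\tilde g$ of even length $n-1$. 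If $g^{**}\in H$ and $n=3$ then $\tilde g=g^*_1g^{**}$ is a nontrivial element of a single factor, hence has canonical length $1$. Finally, if $g^{**}\in H$ and $n\geq5$ then $g^*_{n-2}g^{**}$ lies in the factor of $g^*_{n-2}$ and outside $H$, so $g^*_1\cdots g^*_{n-3}(g^*_{n-2}g^{**})$ is a canonical representation of $\tilde g$ of odd length $n-2$, and the induction hypothesis applies. In every case some conjugate $g'$ of $g$ has canonical length $m=1$ or $m=2k$.

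The minimality clause in~(1) together with the whole of clause~(2) amount to the conjugacy theorem for $K*_HL$, and this is where the real work lies. When $m=1$ the minimality is trivial, since $\one$ is the only group element of canonical length $0$. It remains to handle an even cyclically reduced length, and here I would prove the following, for an arbitrary weakly cyclically reduced word $f$ of even canonical length $m\geq2$, say with canonical representation $a_0\cdots a_{m-1}$, by induction on the canonical length $\ell(w)$: for every weakly cyclically reduced $h$ that is a cyclic rotation of $f$ with at most one letter split within its factor --- that is, $h$ is the canonical reduction of $x'\,a_{i+1}\cdots a_{m-1}a_0\cdots a_{i-1}\,x''$ for some $i<m$ and some $x',x''$ in the factor of $a_i$ with $x''x'=a_i$, whence $\ell(h)\in\{m,m+1\}$ --- and for every $w$, the conjugate $whw^{-1}$ has canonical length at least $m$, and whenever $whw^{-1}$ is itself weakly cyclically reduced it is again of that rotated-with-a-split shape. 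The base case $\ell(w)=0$ is immediate. In the inductive step one peels a single letter $b$ off $w$ and inspects the two junctions of $b\cdot h\cdot b^{-1}$: when $\ell(h)$ is even exactly one junction collapses, carrying --- through the ambiguity described in Lemma~\ref{unie} --- an element of $H$ one letter inward; when $\ell(h)$ is odd, either neither junction collapses, and then the full concatenation is already canonical, hence certainly of length at least $m$ while failing to be weakly cyclically reduced, or both junctions collapse, and here it is precisely the hypothesis that $h$ is weakly cyclically reduced --- which, since $\ell(h)$ is odd, says that the product of the last letter of $h$ with its first avoids $H$ --- that forbids the \emph{double} absorption of $H$-elements which would otherwise shrink the length to $m-1$. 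Whenever a collapse does occur, $b\cdot h\cdot b^{-1}$ is again a weakly cyclically reduced cyclic rotation of $f$ with at most one split letter, of length $m$ or $m+1$, so the induction hypothesis feeds back. Applying this with $f$ a weakly cyclically reduced conjugate of $g$ of length $m$ (produced above) and $h=f$ gives the bound required in clause~(1); applying it with $f=g$, using that a word of even length is automatically weakly cyclically reduced, gives the description of the weakly cyclically reduced conjugates of $g$ asserted in clause~(2) (an expression that becomes a literal canonical representation once the evident collapse is performed in case $x'$ or $x''$ happens to lie in $H$).

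I expect the main obstacle to be exactly this junction bookkeeping in the inductive step: the case split according to which factors the peeled letter $b$ and its inverse share with the endpoint letters of $h$, the possibility of a short cascade of further collapses when the $H$-element produced at a junction keeps meeting the next letter, and the careful use of Lemma~\ref{unie} to see that such an $H$-valued discrepancy propagates inward by exactly one letter --- so that the induction closes and no conjugate of a weakly cyclically reduced word of length at least $2$ is shorter than that word. All of this is classical for free products with amalgamation, but the details must be handled with care, since clause~(2) is the very mechanism by which the canonical forms occurring in the small-cancellation construction of Section~\ref{sec4} are later kept under control.
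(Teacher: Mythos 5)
The paper does not actually prove this Observation: it is invoked as one of the ``basic results about the structure of groups of the form $K\ast_H L$'' imported from the amalgamated free product literature (Lyndon--Schupp and \S1 of Shelah's paper), and it is essentially the Conjugacy Theorem for amalgams adapted to the notion of ``weakly cyclically reduced'' used here. So there is no in-paper argument to compare with; your proposal reconstructs the standard proof, and its overall architecture is right: produce an even-length (or length-one) conjugate by absorbing end letters, then prove minimality of length and the rotated-with-one-split description by induction on the length of the conjugator, peeling one letter at a time and using the $H$-shift ambiguity of Lemma~\ref{unie} to keep the rotated shape, with weak cyclic reducedness ruling out the simultaneous collapse at both junctions. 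Your first-part reduction (odd $n$: conjugate by $g^*_0$, split into the cases $g^*_{n-1}g^*_0\notin H$, $\in H$ with $n=3$, $\in H$ with $n\ge 5$) is correct, and your observation that the displayed form is a literal canonical representation only after absorbing an $x'$ or $x''$ that lies in $H$ matches how the Observation is used later.

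One local point in your inductive step is misstated and should be repaired. For $h$ of odd length $m+1$ (endpoints $x',x''$ in the factor of the split letter $a_i$, with $x''x'=a_i$), your dichotomy ``either neither junction collapses, and then the concatenation is canonical and not weakly cyclically reduced, or both junctions collapse'' is not exhaustive and is wrong in one branch: if the peeled letter $b$ lies in the \emph{same} factor as the endpoints and neither $bx'$ nor $x''b^{-1}$ falls into $H$, the result $(bx')a_{i+1}\cdots a_{i-1}(x''b^{-1})$ still has length $m+1$, is weakly cyclically reduced (its last-times-first product is $x''x'=a_i\notin H$), and is again of the split shape; and if exactly one of these products lies in $H$, the length drops to $m$ with the shape preserved (only the double collapse is excluded, exactly by weak cyclic reducedness, as you say). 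The case you describe as ``neither junction collapses, hence not weakly cyclically reduced'' is really the case where $b$ lies in the factor \emph{opposite} to the endpoints; there the word grows, and since the letters of $w$ alternate, no further cancellation can ever occur, which is what legitimizes your claim that the full concatenation is canonical of length $>m$ and closes the induction outside the reach of the inductive hypothesis. Also, the ``short cascade of further collapses'' you worry about cannot occur: an $H$-element merged into an adjacent letter leaves that letter outside $H$, so at most one letter is absorbed at each junction. With this corrected case split the argument goes through and proves both clauses, including the minimality statement in clause~(1).
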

	Recalling Lemma~\ref{unie} it is not difficult to see that this is a property of the element of  $K *_H L$, i.e., it does not depend on the particular choice of the canonical representation $g^*_0 g^*_1 \ldots g^*_{n-1}$.
	
	\begin{definition}
		Let $H \leq K,L$ be groups such that $L \cap K = H$, and fix $R \subseteq K *_H L$. We say that 
		$R$ is \emph{symmetrized} if for every $g \in R$:
		\begin{enumerate}
			\item $g^{-1} \in R$, and
			\item for each $g'$ that is conjugate to $g$ and  weakly cyclically reduced, $g' \in R$.
		\end{enumerate}
	\end{definition}

	\begin{definition}
		Let $X \subseteq K *_H L$, and $\chi \in (0, 1)$. We say that  $X$ satisfies $C'(\chi)$, if whenever 
		\begin{enumerate}
			\item $g^*_{n-1} g^*_{n-2} \ldots g^*_{0}, g^{**}_0 g^{**}_1 \ldots g^{**}_{m-1} \in X$,
			\item $g^*_{n-1} g^*_{n-2} \ldots g^*_1 g^*_{0} \cdot g^{**}_0 g^{**}_1 \ldots g^{**}_{m-1} \neq \one$,
			\item $\ell < n,m$, and
			\item $g^*_{\ell-1}g^*_{\ell-2} \ldots g^*_0g^{**}_0g^{**}_1 \ldots g^{**}_{\ell-1} \in H$,
		\end{enumerate}  
			then $\ell < \min(n,m) \cdot \chi$.
	\end{definition}
	
	\begin{definition}
		Let $H,K,L$ be as in Definition~\ref{d21}, and let
		$g \in K *_H L$. We say that the word $w_0w_1 \ldots w_{m-1}$ is a part of $g$, if
		\begin{enumerate}
			\item $w_0w_1 \ldots w_{m-1} \in K *_H L$ is in canonical form,
			\item for some weakly cyclically reduced conjugate $g'$ of $g$, the word $\langle w_0,w_1, \ldots, w_{m-1} \rangle$ is a subword of a canonical representation of $g'$
			(i.e., \emph{for some} canonical representation $v_0 v_1 \ldots v_{n-1}$ of $g'$ and some $k  \leq n-m$, we have $v_k = w_0$, $v_{k+1} = w_1, \ldots, v_{k+m-2} = w_{m-2}$, $v_{k+m-1} = w_{m-1}$.)
		\end{enumerate}
	\end{definition}
	We cite the following lemma, which is our key technical tool borrowed from small cancellation theory.
	\begin{lemma} \label{smacan}
		Let $H \leq K,L$ be groups, $K \cap L = H$, $k$ a positive integer, and assume that $R \subseteq K *_H L$ is symmetrized and satisfies $C'(\frac{1}{k})$.
		
		Then, letting $N= \ < R >^{K *_H L}$ be the normal subgroup generated by $R$, 
		for every weakly cyclically reduced $w \in N$, there exist $r \in R$ and a part $p$ of $r$, which is also a part of $w$, and $\ell(p) > \frac{k-3}{k} \ell(r)$. 
	\end{lemma}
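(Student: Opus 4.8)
The plan is to reduce this statement, which concerns the free amalgamated product $K *_H L$, to the classical small cancellation theorem for ordinary free products (or free groups), where the analogous statement — a weakly cyclically reduced word in the normal closure of a symmetrized $C'(1/k)$-set must contain a piece of some relator occupying more than a $\frac{k-3}{k}$-fraction of it — is well known; see \cite{lyndon1977combinatorial} and \cite[\S1]{Sh:69}. The structural input we have is Lemma~\ref{unie} (uniqueness of canonical forms up to $H$-conjugation) and Observation~\ref{obsa}, which together let us treat a canonical form $g^*_0 g^*_1 \cdots g^*_{n-1}$ as a genuinely reduced word in the free product sense, with the length function $\ell$ playing the role of syllable length.

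First I would fix $w \in N$ weakly cyclically reduced and, since $N = \langle R\rangle^{K*_H L}$, write $w$ as a product of conjugates of elements of $R$; because $R$ is symmetrized, we may drop the inverses and assume $w = u_1 r_1 u_1^{-1} \cdots u_t r_t u_t^{-1}$ with each $r_j \in R$. The heart of the matter is a van~Kampen / cancellation-diagram argument: build a reduced diagram over this product, and analyze the boundary of a region. Concretely, among all ways of expressing $w$ as such a product, take one with $t$ minimal and with $\sum_j \ell(r_j)$ minimal; then argue that no two of the conjugated relators can cancel each other by more than their $C'(1/k)$-fraction, and that the pieces lost to interaction with neighbours are, for at least one relator $r$, together strictly less than $\frac{3}{k}\ell(r)$ of its length — so more than $\frac{k-3}{k}\ell(r)$ of $r$ survives as a subword (up to weakly-cyclically-reduced conjugation) of the final reduced form of $w$. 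This is exactly where the hypotheses (3),(4) in the definition of $C'(\chi)$ enter: they bound the length of any common $H$-coset-segment shared between two relators, which is precisely the amount of cancellation permitted in the free-product setting.

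The one genuinely new wrinkle compared to the textbook free-group case is bookkeeping the subgroup $H$: cancellation in $K *_H L$ is not literal letter-deletion but collapsing of adjacent syllables modulo $H$ (as in Lemma~\ref{unie}, the $h_i$'s), and a "part" of $r$ is only defined up to passing to a weakly cyclically reduced conjugate and up to the $x'_i,x''_i$ splitting of a boundary syllable from Observation~\ref{obsa}\eqref{obsa2}. So the main obstacle I anticipate is purely technical: one must verify that when a maximal initial-or-final segment of a conjugated relator is absorbed into its neighbour, the segment that is absorbed lies in $H$ after multiplying by the appropriate $H$-elements, hence its "syllable length" is well-defined and is exactly the quantity bounded by $C'(1/k)$; and symmetrically that what remains of $r$, after this $H$-normalization, is still a part of $r$ in the precise sense of the definition. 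Once that syllable-level accounting is set up correctly, the numerical estimate $\ell(p) > \frac{k-3}{k}\ell(r)$ follows by the same counting as in the classical proof — each relator in a minimal reduced diagram meets at most two others (or three, in the weakly-cyclically-reduced cyclic case, which accounts for the "$-3$" rather than "$-2$"), each interaction removing less than $\frac{1}{k}\ell(r)$, leaving more than $\frac{k-3}{k}\ell(r)$ exposed on the boundary.

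I would therefore structure the write-up as: (i) recall the free-product normal-form machinery and reduce to a minimal product of conjugates; (ii) set up the notion of a reduced cancellation diagram over $K*_H L$, citing the relevant structural lemmas so the paper stays self-contained; (iii) prove the key local claim that adjacent relators in a reduced diagram share an $H$-segment of syllable-length $< \frac{1}{k}$ of the shorter one; (iv) run the global counting to extract the desired relator $r$ and part $p$. The expectation is that (iii) is the only step requiring real care, and everything else is a transcription of Greendlinger-type arguments into the amalgamated-product language.
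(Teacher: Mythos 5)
You should first note that the paper offers no proof of Lemma~\ref{smacan} to compare against: it is explicitly \emph{cited} as a known result of small cancellation theory over free products with amalgamation (see \cite{lyndon1977combinatorial} and \cite[\S 1]{Sh:69}). Your sketch does follow the route by which the cited sources establish it (reduced diagrams over $K*_H L$ and a Greendlinger-type count), so at the level of strategy it is the standard one.

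As a proof, however, there is a genuine gap: all of the quantitative content --- why some single relator survives to more than a $\frac{k-3}{k}$-fraction --- is delegated to ``the same counting as in the classical proof'', and the one mechanism you do offer in its place is not correct. It is not true that in a minimal or reduced expression (or diagram) each relator ``meets at most two others (or three)''; a boundary region of a reduced diagram can abut arbitrarily many other regions, and likewise a conjugated relator in a minimal product $u_1r_1u_1^{-1}\cdots u_tr_tu_t^{-1}$ can, after partial cancellations, interact with many non-adjacent factors, so pairwise $C'(\frac1k)$-bounds between ``neighbours'' do not by themselves yield the estimate. The constant $3$ comes from the structure theorem for reduced $R$-diagrams (an Euler-characteristic/curvature count in the style of Greendlinger's lemma): one shows that \emph{some} boundary region $D$ meets the boundary of the diagram in a consecutive arc and has at most three interior edges, each interior edge carrying a piece of length less than $\frac1k$ of the length of the boundary label of $D$, whence the exterior arc --- which is a part of the corresponding $r\in R$ and a part of $w$ --- has length greater than $\frac{k-3}{k}\ell(r)$. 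Establishing that structure theorem in the amalgamated setting, including the verification that interior edges are pieces in the sense of $C'$ and that boundary labels are canonical forms of weakly cyclically reduced conjugates of elements of $R$ (your ``$H$-bookkeeping'', which is exactly where Lemma~\ref{unie} and Observation~\ref{obsa} enter), \emph{is} the proof, and your write-up leaves it entirely undone. Since the paper itself imports the lemma, the efficient course is to do the same: cite the amalgamated-product small cancellation theorem from \cite{lyndon1977combinatorial} or \cite[\S 1]{Sh:69} rather than re-derive it from a sketch.
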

	\begin{corollary} \label{smacor}
		If $H,K,L,R$ are as in Lemma~\ref{smacan}, then for the canonical projection map
		$\pi: K \ast_H L \to (K \ast_H L)/N$, it is the case that $\pi \rest K$ and $\pi \rest L$ are injective, and $ \pi``K \cap \pi``L = \pi``H$ (where $K,L$ are identified with the subgroups of $K \ast_H L$).
	\end{corollary}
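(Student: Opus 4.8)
The plan is to reduce all three assertions to a single statement about short elements of $N$: \emph{no element of $N\setminus\{\one\}$ admits a canonical representation of length at most $2$.} Granting this, the corollary follows by bookkeeping with canonical forms. The inclusion $\pi``H\s\pi``K\cap\pi``L$ is immediate from $H\le K$ and $H\le L$. For the injectivity of $\pi\rest K$ (the case of $\pi\rest L$ being symmetric): if $a,a'\in K$ satisfy $\pi(a)=\pi(a')$, then $w:=a(a')^{-1}$ lies in $K\cap N$, and were $w\ne\one$, its canonical representation as an element of $K\ast_HL$ would be the singleton $\langle w\rangle$, of length $1$, contradicting the italicized statement; hence $a=a'$. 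For $\pi``K\cap\pi``L\s\pi``H$, take $a\in K$ and $b\in L$ with $\pi(a)=\pi(b)$, so $ab^{-1}\in N$; if both $a\notin H$ and $b\notin H$ then also $b^{-1}\in L\setminus H$, so $\langle a,b^{-1}\rangle$ is a canonical representation of the nontrivial element $ab^{-1}$, of length $2$, again contradicting the italicized statement. Hence $a\in H$ or $b\in H$, and in either case $\pi(a)=\pi(b)\in\pi``H$.

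The work is thus in the italicized statement. First I would observe that a canonical representation of length at most $2$ is automatically weakly cyclically reduced --- length $1$ by definition, and length $2$ because it is of even length --- so Lemma~\ref{smacan} applies to any such $w\in N$, producing $r\in R$ and a part $p$ of $r$ that is simultaneously a part of $w$ with $\ell(p)>\frac{k-3}{k}\ell(r)$. The crux is to bound $\ell(p)$. By definition a part of $w$ is a subword of a canonical representation of some weakly cyclically reduced conjugate of $w$, and Observation~\ref{obsa} (with Lemma~\ref{unie} guaranteeing that this notion is independent of the chosen canonical representative) describes these conjugates: an element of canonical length $1$ has conjugates only of odd canonical length, so its weakly cyclically reduced conjugates all have length $1$; and an element of canonical length $2$ has conjugates only of canonical length $2$ or of odd length, among which only those of length at most $3$ can be weakly cyclically reduced, since an odd-length conjugate longer than the element itself has a strictly shorter conjugate (cf.\ Observation~\ref{obsa}(\ref{obsa2})). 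Hence $\ell(p)\le 3$, so $\ell(r)<\frac{3k}{k-3}$; as $R$ consists --- in the standard small-cancellation setting in which Lemma~\ref{smacan} is invoked --- of relators long relative to $k$, concretely with $\ell(r)>\frac{3k}{k-3}$, this is a contradiction, and the italicized statement follows.

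I expect the main obstacle to be pinning the length bound on parts down correctly: the naive guess that a canonical representation of length $2$ has only parts of length at most $2$ is false, because such an element can acquire a weakly cyclically reduced conjugate of length $3$ by rewriting one of its two letters as a product $x''x'$ in the sense of Observation~\ref{obsa}(\ref{obsa2}). Getting this constant right --- and keeping the lower bound on relator lengths in view, without which Lemma~\ref{smacan} is vacuous --- is where the argument needs care; the remainder is just unwinding the definitions of canonical form, part, and weak cyclic reduction.
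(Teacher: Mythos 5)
Your proposal is correct in substance, and it is essentially the argument the paper leaves to the reader: Corollary~\ref{smacor} is stated without proof, as standard small-cancellation background accompanying the quoted Lemma~\ref{smacan}. Your reduction to the claim that $N\setminus\{\one\}$ contains no element of canonical length at most $2$ is the right one, and your accounting of parts is accurate: a nontrivial element of canonical length $1$ or $2$ is weakly cyclically reduced, its weakly cyclically reduced conjugates have canonical length at most $3$ (the length-$3$ possibility coming from splitting a letter as in Observation~\ref{obsa}\eqref{obsa2}), so Lemma~\ref{smacan} forces $\frac{k-3}{k}\ell(r)<3$, i.e.\ $\ell(r)<\frac{3k}{k-3}$, for some $r\in R$. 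The only caveat is the one you flag yourself: this is contradictory only if the relators are long, and no such length hypothesis is literally contained in ``$H,K,L,R$ as in Lemma~\ref{smacan}''. This is a defect of the literal statement rather than of your argument: some length assumption is unavoidable, since for instance $K=\langle a\rangle$, $L=\langle b\rangle$, $H=\{\one\}$, $R=\{a,a^{-1}\}$ gives a symmetrized set satisfying $C'(\frac1k)$ vacuously (only $\ell=0$ can occur in the definition of $C'$), yet $a\in K\cap N$, so $\pi\rest K$ fails to be injective. The corollary is therefore to be read with the tacit convention of the standard setting that every $r\in R$ is long relative to $k$, and this holds wherever the paper invokes it: in Lemma~\ref{shhe} one has $k=10$ and $\ell(r)\in\{6640,6641\}$, and the relators $R_1$ in the topologizability argument are longer still. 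With that reading, your proof is complete and matches the intended derivation.
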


\section{Finding the right amalgam}\label{pumpupsection}
The main result of this section is Lemma~\ref{shhe} below. It originates to the lemma by G.~Hesse appearing in the Appendix of \cite{Sh:69}.
The lemma will serve as a building block in the recursive construction of Section~\ref{sec4}.

		\begin{definition}
		We let $\varrho(x,y)$ denote the word $xyx^2yx^3y \ldots x^{80}y$. Note that $\ell(\varrho(x,y)) = 3320$.
	\end{definition}

In reading the statement of the next lemma, recall that $H \mal L$ means $H$ is a malnormal subgroup of $L$ (see Definition \ref{dfmaln}).
	\begin{lemma}\label{shhe}
		Let $H \leq K$, $H \mal L$ be groups, $K \cap L = H$ and suppose that 
		$$S = \{ (h_i,a_i,b_i,b_i')\mid i \in I\} \subseteq H \times (K \setminus H) \times (L\setminus H)\times (L\setminus H)$$
		satisfies the following two:
		\begin{enumerate}
			\item for every $i \in I$, $b_i$ and $b'_{i}$ are good fellows over $H$,
			\item for all $i \neq j$ in $I$, at least one of the following holds: \begin{enumerate}[label = $(\circleddash)_\alph*$, ref = $(\circleddash)_\alph*$]
						\item $a_i$ and $a_j$ are good fellows over $H$ (in $K$),
				\item \label{harm} $b_i = b_j$, $b_i' = b_j'$, and $a_i \neq a_j$, 
				\item $b_i$ and $b_j$ are good fellows over $H$,
				\item \label{harh} there are subgroups $H' \leq H$ and $K' \leq K$ such that all of the following hold:
				\begin{enumerate}[label = $(\roman*)$, ref =  $(\roman*)$]
					\item $K' \cap H = H'$,
					\item $a_i, a_j \in K' \setminus H = K' \setminus H'$,
					\item \label{gf} $b_i$ and $b_j$ are good fellows over $H'$ (in $L$),
					\item \label{gf2} $b_i$ and $b_j'$ are good fellows over $H$,
					\item $(K' \setminus H) \cdot (H\setminus K') \cdot (K' \setminus H) \subseteq (K \setminus H)$.

				\end{enumerate}  
			\end{enumerate}
		\end{enumerate}

		Then, letting $R$  be the symmetric closure of $\{h_i^{-1}\varrho(b_ia_i, b_i'a_i)\mid i \in I\}$, $M = K \ast_H L$,  $N= R^{M}$ the generated normal subgroup and $M^* = M / N$,  all of the following hold:
		\begin{enumerate}[label = $(\Alph*)$ , ref = $(\Alph*)$]
			\item $R$ satisfies the condition $C'(\frac{1}{10}$), consequently, for the canonical mapping $\pi: M \to M^*$, we have that $\pi \rest (K \cup L)$ is injective,
			\item $K \mal M^*$, and if $H \mal K$, then $L \mal M^*$,
			\item \label{gfa} if $b,b' \in L \setminus H$ are \emph{not} 
			 good fellows over $H$, $d \in K \setminus H$, then the group elements $db'$ and $dbdb$ are good fellows over $K$ in $M^*$, 
				\item \label{par} 
				if $b,b' \in L \setminus H$, $a \in K \setminus H$, then $M^* \models bab' \notin K$, $ba \notin K$ (and similarly the parallel statement with with interchanging $K$ and $L$)
			\item \label{H'} if $a$, $a' \in K$ are good fellows over $H' \leq H$ (in $K$), and $L' \leq L$ is such that $L' \cap K = L' \cap H = H'$, then they are good fellows over $L'$ in $M^*$,
			\item\label{HH}  
			if $b,b' \in L$ are good fellows over $H$, then they are good fellows over $K$ in $M^*$,
			\item \label{tfree} If $K$, $L$ are torsion-free, then so is $M^* =M/N$.
		
		\end{enumerate}
	\end{lemma}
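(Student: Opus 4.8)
All seven conclusions (A)--(G) will be obtained by a single mechanism: an assertion about the quotient $M^{*}=M/N$ is first recast, using uniqueness of canonical forms (Lemma~\ref{unie}), as a statement about canonical forms in the amalgam $M=K\ast_H L$, and is then transported to $M^{*}$ via Lemma~\ref{smacan}, which guarantees that every nontrivial weakly cyclically reduced element of $N$ contains, as a subword, a part of some $r\in R$ of length exceeding $\tfrac{k-3}{k}\ell(r)$, here with $k=10$. Everything therefore rests on (A). The first observation is that each generating relator is well behaved: since $b_i,b_i'\in L\setminus H$ and $a_i\in K\setminus H$, the words $b_ia_i$ and $b_i'a_i$ are in canonical form of length $2$, no cancellation occurs when they are concatenated according to the pattern of $\varrho$, and $h_i^{-1}$ merely merges into the first letter $b_i$ (which remains outside $H$); hence $r_i=h_i^{-1}\varrho(b_ia_i,b_i'a_i)$ is weakly cyclically reduced of even length $\ell(r_i)=2\cdot 3320=6640$, and the same is true of every member of $R$ (inverses and weakly cyclically reduced conjugates).

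The core of the proof is the verification of $C'(\tfrac1{10})$: a segment $g^{*}_{\ell-1}\cdots g^{*}_{0}g^{**}_{0}\cdots g^{**}_{\ell-1}$ that lies in $H$ and is shared by two members of $R$ must satisfy $\ell<\tfrac1{10}\cdot 6640=664$. This is exactly what the staircase shape $\varrho=xyx^2yx^3y\cdots x^{80}y$ is designed for: a common segment long enough to contain two consecutive blocks $x^{k}y\,x^{k+1}y$ forces these --- up to the $H$-translations allowed by Lemma~\ref{unie} --- to coincide with corresponding blocks of the other relator, which pins down the exponent $k$ and hence, up to $H$-conjugacy, the syllables $b_ia_i,b_i'a_i$ and $b_ja_j,b_j'a_j$ themselves; one then runs through the four cases $(\circleddash)_a$--$(\circleddash)_d$ of hypothesis~(2) and derives a contradiction for a long segment. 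Hypothesis~(1) and malnormality of $H$ in $L$ exclude the degenerate overlaps that would arise from $b_i,b_i'$ (or $b_j,b_j'$) being dependent; clauses $(\circleddash)_a$ and $(\circleddash)_c$ force independence of the $K$-, resp.\ $L$-coordinates across $i\neq j$; the harmony clause $(\circleddash)_b$ forces the $b$-letters to match, so the distinct letters $a_i\neq a_j$ must clash as soon as the alignment is fixed, which happens within one block; and clause $(\circleddash)_d$ is handled by relativizing the same computation to the auxiliary groups $H'\le H$, $K'\le K$, its clause (v) ensuring that the relevant triple products stay outside $H$. Residual self-overlaps --- of a relator with itself, with its inverse, or with one of its weakly cyclically reduced conjugates --- are short because the exponents $1,2,\dots,80$ are pairwise distinct and strictly increasing, so no nontrivial alignment of $\varrho$ with a cyclic shift, a conjugate, or the reverse-inverse of itself persists beyond a single block. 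This case analysis, and in particular the bookkeeping in $(\circleddash)_d$, is the step I expect to be the main obstacle; the rest is comparatively mechanical. Granting $C'(\tfrac1{10})$, Corollary~\ref{smacor} at once gives injectivity of $\pi\rest(K\cup L)$, completing (A).

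For (B), one starts in $M$: $K\mal M=K\ast_H L$ is a standard consequence of $H\mal L$, and $L\mal M$ follows symmetrically from $H\mal K$ --- by Lemma~\ref{unie} a conjugate $g^{-1}hg$ with $h\in K\setminus\{\one\}$ and $g\notin K$ can return to $K$ only by being forced into $H$ along a path through $L$, which malnormality forbids. To lift this to $M^{*}$, suppose $g^{-1}hg=h'\in K$ there with $g\notin K$; among the preimages $\tilde g\in M$ of $g$ (none of which lies in $K$, since $\pi\rest K$ is injective) choose one of minimal canonical length, so that $n:=\tilde g^{-1}h\tilde g(h')^{-1}\in N$. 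If $n\neq\one$, a weakly cyclically reduced conjugate $n_0$ of $n$ is still in $N$, hence by Lemma~\ref{smacan} contains a part $p$ of some $r\in R$ with $\ell(p)>\tfrac{7}{10}\cdot 6640=4648$; when $\tilde g$ is short enough that $\ell(n)<\ell(p)$ this is an immediate contradiction, and otherwise $p$ either lies inside $\tilde g^{\pm1}$ --- whereupon, since $r$ equals in $M^{*}$ its complementary part of length $<\tfrac{3}{10}\ell(r)$, one substitutes to obtain a strictly shorter preimage of $g$, contradicting minimality --- or $p$ straddles the single middle letter of $\tilde g^{-1}h\tilde g$, which the rigid staircase form of $r$ (together with the symmetrization of $R$) does not allow. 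Hence $n=\one$, so $\tilde g^{-1}h\tilde g=h'\in K$ in $M$, contradicting $K\mal M$.

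Conclusions (C)--(F) follow the template ``first in $M$, then lift''. Unfolding the definition of good fellows, one verifies the relevant instance inside $M$ by a canonical-form computation: for (E), the hypothesis $L'\cap K=L'\cap H=H'$ makes $\langle a,a',L'\rangle$ the amalgam $\langle a,a',H'\rangle\ast_{H'}L'$, in which good fellows over $H'$ on the $K$-side stay good fellows over $L'$; for (F) and (C) one invokes hypothesis~(1) together with $H\mal L$; and (D) is immediate from $M=K\ast_H L$, since $bab'$ has canonical length $3$ (and $ba$ length $2$) and so does not lie in $K$. The lift to $M^{*}$ works because each of these is a local statement: a failure in $M^{*}$ produces a word $w$ of bounded length with $w\in N$, and by Lemma~\ref{smacan} a nontrivial such $w$ has a weakly cyclically reduced conjugate of canonical length $>4648$, whence $w=\one$ and the structure already established in $M$ yields the conclusion. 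The one subtlety is to rule out that a whole relator participates in $w$; the extra hypotheses are present precisely for this --- ``$b,b'$ not good fellows over $H$'' in (C), the intersection conditions on $L'$ in (E) --- so in each case one checks that no $r\in R$ can occur in the relevant word. Finally, for (G), $M=K\ast_H L$ is torsion-free because $K,L$ are and torsion in an amalgam is conjugate into a factor, and no $r\in R$ is a proper power, since the $L/K$-syllable pattern of $r$ exhibits the strictly increasing exponent blocks $1,2,\dots,80$ and therefore has no nontrivial period; the standard fact that a $C'(\tfrac16)$ quotient by relators that are not proper powers creates no new torsion --- rederivable here from Lemma~\ref{smacan}, since a part of a relator longer than $\tfrac12\ell(r)$ inside a periodic word $\tilde g^{\,n}$ would force $r$ to be periodic --- then shows that $M^{*}$ is torsion-free as well.
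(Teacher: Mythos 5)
Your overall architecture is the paper's: establish $C'(\frac{1}{10})$ for $R$ and then push every clause through Lemma~\ref{smacan}/Corollary~\ref{smacor}, treating (C)--(F) by noting that a failure would produce a short nontrivial element of $N$, which is impossible because every nontrivial weakly cyclically reduced element of $N$ contains a part of a relator of length $>\frac{7}{10}\cdot 6640$; that reduction is sound and is essentially how the paper handles (D) and (E). But at the two places where the real content lies, nothing is actually argued. For (A) you explicitly defer the case analysis (``the main obstacle''): the paper's proof \emph{is} that analysis --- a common $H$-segment of length $>664$ forces both $a_i,a_j$ and $b_i,b_j$ into common $H$-double cosets, which eliminates $(\circleddash)_a$ and $(\circleddash)_c$ and leaves $(\circleddash)_b$, $(\circleddash)_d$ or $i=j$, in each of which $b_i$ and $b_j'$ are good fellows over $H$; that is what rules out misaligned overlaps (within fewer than $500$ letters a $b$-position faces a $b'$-position), and the aligned case is then closed using $H\mal L$ (harmony/$i=j$) resp.\ clauses (iii)--(v) of $(\circleddash)_d$. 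Your sketch names these ingredients but does not carry out the step you yourself flag as the crux, so (A) remains an outline.

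The clearest genuine gap is in (B), and the same defect recurs in (G). After normalizing the conjugator so that it carries at most half of any relator --- which is also the paper's normalization, so your route is not really different there --- everything hinges on showing that a part of some $r\in R$ of length $>\frac{7}{10}\cdot 6640$ cannot straddle the conjugated letter. You dismiss this with ``the rigid staircase form of $r$ (together with the symmetrization of $R$) does not allow'' it, but that is not a reason: parts of relators straddle single $K$-letters constantly (every second letter of $r$ lies in $K$), and symmetrization only enlarges $R$. The actual obstruction is a mirror-symmetry clash measured by the good-fellows relation: in $zgz^{-1}$ the letters symmetric about $g$ are $z_k$ and $z_k^{-1}$, hence lie in one $H$-double coset and are \emph{not} good fellows over $H$, whereas in any $r\in R$, around any central position there are symmetric positions within distance $250$ carrying a letter of $Hb_i^{\pm1}H$ and a letter of $H(b_i')^{\pm1}H$, which \emph{are} good fellows by hypothesis (1); since canonical forms are determined only up to $H$-twists (Lemma~\ref{unie}), this double-coset formulation is unavoidable. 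Your argument for (B) never invokes hypothesis (1), which signals that the essential idea is missing. Likewise in (G), ``a long part inside $\tilde g^{\,n}$ would force $r$ to be periodic'' must be run through double cosets rather than literal letters: one needs the quantitative bound $2m\le\frac{6640}{2}+1$ from the normalization and then a position $\ell\le 330$ with $s_\ell\in Hb_i^{\pm1}H$ while $s_{\ell+2m}\in H(b_i')^{\pm1}H$, contradicting that letters at distance $2m$ in a periodic word are equivalent over $H$.
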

	\begin{PROOF}{Lemma~\ref{shhe}}
		First we note that if $a \in K \setminus H$, $b,b' \in L \setminus H$, then the word $\varrho(ba, b'a)$ is  an alternating word (over the union of $K\setminus H$ and $L \setminus H$) of length $6640$.
		\begin{enumerate}[label = $(\Alph*)$ , ref = $(\Alph*)$]
			\item By Corollary \ref{smacor} it is enough to argue that $R$ satisfies $C'(\frac{1}{10})$.
To this end, fix two elements $g \neq g^* \in R$, as well as some canonical representations
			 $$g = g_0g_1\cdots g_{n-1},$$ 
			 $$g^* = g^*_0g^*_1\cdots g_{m-1}^*.$$
			 By Clause~\eqref{obsa2} of Observation~\ref{obsa}, we clearly get that $n,m \in \{6640,6641\}$.
			Let $l \in \omega$, and assume that 
			 \begin{equation} \label{felt} (k \leq l) \ \to \ (K \ast_H L \models \ g^{-1}_{k-1}g^{-1}_k\cdots g_0^{-1}g^*_0g^*_1\cdots g^*_{k-1} \in H), \end{equation}
			 so we have to show that $l \leq 664$.
			 
			 Assume on the contrary that $l > 664$.
			 We can choose $i,i^* \in I$, such that $g$ is a weakly cyclically reduced conjugate of $h_i^{-1}\varrho(b_ia_i, b_i'a_i)$
			  or of $(h_i^{-1}\varrho(b_ia_i, b_i'a_i))^{-1}$,
			  and similarly for $g^*$ and $i^*$.
			 If we fix the canonical representations
			 $$h_i^{-1}\varrho(b_ia_i, b_i'a_i) = u_0u_1 \cdots u_{6639},$$
			 where $u_j \in \{b_i,b_i',a_i,h_i^{-1}b_i\}$,
			 and similarly
			  $$h_{i^*}^{-1}\varrho(b_{i^*}a_{i^*}, b_{i^*}'a_{i^*}) = u^*_0u^*_1 \cdots u^*_{6639},$$
			 then again recalling Observation~\ref{obsa}\eqref{obsa2}, we can assume that there exist $j,j^* <6640$, $\varp,\varp^* \in \{1,-1\}$, such that
			 whenever $0 < k \leq 6640$, then $g_k = u^\varp_{j+ \varp k}$ and $g^*_k = u_{j^* + \varp^* k}^{\varp^*}$.

			 First note that the pair $b_i$, $b_i'$ are not good fellows over $H$:
			 there is a $1 \leq k \leq 10$ such that $u_{j + \varp k} \in \{b^\varp_i, (h^{-1}_ib_i)^{\varp}\}$, and $u^*_{j^* + \varp^* k} \in \{b^{\varp^*}_{i^*}, (h^{-1}_{i^*}b_{i^*})^{\varp^*}\}$, so by \eqref{felt} for some $h \in H$ we have $b^{-\varp}_ih b^{\varp^*}_{i^*} \in H$, implying that $b_i$ and $b_{i^*}$ are not good fellows over $H$. Similarly, for some $1 \leq k^\bullet \leq 2$ $u_{j + \varp k^\bullet} = a^\varp_i$, and $u^*_{j^* + \varp^* k^\bullet} = a^{\varp^*}_{i^*}$, and by the same line of reasoning $a_i$ and $a_{i^*}$ are not good fellows over $H$.
			 
			 We clearly get that 
			 \begin{enumerate}[label = $(\boxminus)$, ref =  $(\boxminus)$]
			 	\item \label{uj}	  \ref{harm}, or \ref{harh}, or $i = i^*$ holds, and in each case $b_i$, $b_{i^*}'$ are good fellows over $H$.
			 \end{enumerate}
		
			 We have to distinguish between cases depending on the values of $j,j^*,\varp,\varp^*$: if $j \neq j^*$, or $\varp \neq \varp^*$, then there exists $1 \leq k <500$ such that $u_{j + \varp k} \in \{b^\varp_i, (h_i^{-1}b_i)^\varp \}$, and $u^*_{j^* + \varp^* k} = (b'_{i^*})^{\varp^*} = (b_i')^{\varp^*}$, and for some $h \in H$ we have $b_i^{-\varp} h(b'_{i^*})^{\varp^*}$ $\in H$ (or $(h_i^{-1}b_i)^{-\varp} h(b'_{i^*})^{\varp^*}$), so $b_i$ and $b_{i^*}'$ are not good fellows over $H$, contradicting \ref{uj}.

			 Therefore we can assume that $j = j^*$ and $\varp = \varp^*$. 	 \begin{enumerate}[label = $(\boxtimes)_1$, ref =  $(\boxtimes)_1$]
			 	\item Suppose first that either \ref{harm} or $i = i^*$, and so necessarily $b_i = b_{i'}$, $b_i' = b_{i'}'$.
			 \end{enumerate}
		  But now for some $1\leq k \leq 10$ $g_k = g^*_k = b_i$, so for 
			 $$h = g^{-1}_{k-1}g^{-1}_{k-2} \cdots g^{-1}_0g^*_0g^*_1 \dots g^*_{k-1} \in H$$ we have
			 $$ g_k h g_k^{-1} \in H,$$
			 but then $H \mal L$ (together with $b_i \in L \setminus H$) implies that $h = \one$.
			
			Now if $i = i^*$, then invoking Observation~\ref{obsa}\eqref{obsa2} again (and recalling that  $g$ and $g^*$ are cyclically reduced conjugates of $h^{-1}_i \varrho(b_ia_i, b_i'a_i)$),  it is straightforward to check that $j = j^*$ and $\varp = \varp^*$ imply $g = g^*$, which is a contradiction.
			
			On the other hand, if $i \neq i'$, $a_i \neq a_{i'}$, then $g_k h g_k^{-1} = \one$ implies that
			$g_{k+1}(g_kh g_k^{-1})(g^*_{k+1})^{-1} = a_ia^{-1}_{i'} \neq \one$, and in the following step (conjugating by $b_i=b_{i'}$ again) we get a contradiction.
			
			It remains to check the case when
			 \begin{enumerate}[label = $(\boxtimes)_2$, ref =  $(\boxtimes)_2$]
				\item the pair $i$, $i^*$ satisfies condition \ref{harh}:
			\end{enumerate}
			(Recall that we can assume $j = j^*$, $\varp = \varp^*$.)
			First we claim that
			\begin{enumerate}[label = $(\intercal)$, ref =  $(\intercal)$]
				\item \label{inte}  for some $k$ with $1 \leq k < 12$ we have $g_k = u_{j + \varp k} = a^\varp_i$, and $g^*_k = u^*_{j + \varp k} = a^{\varp}_{i^*}$, and 
				$$h = g^{-1}_{k-1}g^{-1}_{k-2} \cdots g^{-1}_0g^*_0g^*_1 \cdots g^*_{k-1} \in H \setminus K' = H \setminus H'.$$
			\end{enumerate}
			
			As above for some $k^\bullet < 10$ we have $u_{j + \varp k^\bullet} = a^\varp_i$, and $u^*_{j + \varp k^\bullet} = a^{\varp}_{i^*}$, $u_{j + \varp (k^\bullet+1)} = b^\varp_i$, and $u^*_{j + \varp (k^\bullet+1)} = b^{\varp}_{i^*}$  suppose that 
			$$h = g^{-1}_{k^\bullet-1}g^{-1}_{k^\bullet-2} \cdots g^{-1}_0g^*_0g^*_1 \cdots g^*_{k^\bullet-1} \in  H'.$$
			Then $h' = a^{-\varp}_i h a^\varp_{i^*} \in K' H' K' = K'$, and by our indirect assumptions $a^{-\varp_i} h a^\varp_i \in H$, so $h'$ lie in the intersection $K' \cap H = H'$.
			Now 
			$$u_{j + \varp (k^\bullet+1)}^{-\varp} h' u^{\varp}_{j + \varp (k^\bullet+1)} = b^{-\varp}_i h' b^\varp_{i^*} \in b^{-\varp}_i H' b^{\varp}_{i^*},$$
			so by \ref{harh} \ref{gf} this product is not in $H'$, thus we can assume that some $k<12$ satisfies \ref{inte}.
			
			But then using $a_i, a_{i^*} \in K' \setminus H'$, 
			$$ g^{-1}_{k}g^{-1}_{k} \cdots g^{-1}_0g^*_0g^*_1 \cdots g^*_{k} = a_i^{-\varp}h a_{i^*}^{\varp} \in (K' \setminus H) \cdot (H \setminus H') \cdot (K' \setminus H) \subseteq K \setminus H,$$
			a contradiction.
			
			\item Fix $g,g' \in K\setminus \{\one\} \subseteq M^*$, and $z \in M^* \setminus K$, with a canonical form $z = z_0z_1 \cdots z_{m-1}$, that satisfies it does not contain any subsequence $z_{i_0}z_{i_0+1}\ldots z_{i_0+j-1}$ that is a subsequence of a canonical form of an element $r \in R$, where $j > \frac{6640}{2}+1$ (we can assume this, since otherwise we could insert the entire sequence of the inverse of this fixed canonical form of $r$). 
			Now suppose that $zgz^{-1}g' = \one$ holds in $M^*$, i.e.\
			\[ M \models zgz^{-1}g' \in N. \]
			W.l.o.g.~$z_0,z_{m-1} \in L$ (thus $m$ is odd), since otherwise we can replace $g$ with $z_{m-1}gz_{m-1}^{-1} \in K \setminus \{\one\}$, and $g'$ with $z^{-1}_0g'z_0 \in K \setminus \{\one\}$.
			This means that the product $z_0z_1 \cdots z_{m-1}gz_{m-1}^{-1} \cdots z_0^{-1}g'$ is in a  weakly cyclically reduced form, so a cyclic conjugation contains a long ($> 7/10$) subword of some canonical form of an $r \in R$. By our assumptions on $z$ (not containing more than half of a canonical representation of $r$) this has to involve either $g$ or $g'$, in fact either the word $z_{j}z_{j+1} \dots z_{m-1}gz^{-1}_{m-1} z^{-1}_{m-2} \dots z^{-1}_{j}$ or $z^{-1}_{j_*}z^{-1}_{j_*-1} \dots z^{-1}_{0}g'z_{0} z_{1} \dots z_{j_*}$ contains a long ($>2/10$ fraction) subword of a canonical form of some $r \in R$. But this is impossible since in any $r = r_0r_1 \dots r_{n-1} \in R$ ($n \in \{6640,6641\}$) at any fixed $t \in [\frac{6640}{10}, \frac{6640 \cdot 9}{10}]$ there exists $k < 250$ such that (for some $i \in I$) $r_{t-k} \in Hb_i^{\pm 1}H$, $r_{t+k} \in H(b_i')^{\pm 1}H$, and so are good fellows over $H$, while $z_k$, $z_k^{-1}$ are clearly not good fellows over $H$. 
			 \item Suppose otherwise, e.g. for some $k,k' \in K$ 
			 either $$y = (db')k(b^{-1}d^{-1}b^{-1}d^{-1})k' = \one \text{ in }M^*,$$
			 or  $$y= (db')k(dbdb)k' = \one.$$
			 Observe that after performing the cancellations in the free amalgam $M$ and writing $y = y_0y_1\ldots y_{m-1}$ as a reduced (alternating) word, in both cases (regardless of whether $k,k' \in H$) there is at most one $j$ for which $y_j \in L \setminus H$ and $y_j$ and $b$ are good fellows over $H$. Now possibly replacing $y_0y_1 \cdots y_{m-1}$ with a weakly cyclically reduced conjugate of it (if the reduced form of $y_0y_1 \cdots y_{m-1}$ is not weakly cyclically reduced) this clause remains true (and the resulting word similarly belongs to $N$ in $M$). It is not difficult to see, that there exists at least one $j'$ such that $y_{j'}$ and $b$ are not good fellows over $H$. Again, $y_0y_1 \cdots y_{m-1}$ (or a cyclical permutation of it) contains a long subword of a canonical form of some $r \in R$, but any such subword (if longer than $400$) contains at least two-two occurrences of $b_i$ and $b_i'$ (for some $i \in I$), and $b$ must be good fellow with either $b_i$ or $b_i'$ (since $b_i$, $b_i'$ are good fellows).
			 
			 \item This is the same as above. Assuming that $M^* \models bab' \in K$, then for some $a' \in K$ 
			 $$M^* \models \ bab'a' = \one,$$
			 so 
			 $$M \models \ bab'a' \in N.$$
			 Now if $a' \in K' \setminus H$, then the word $bab'a'$ is weakly cyclically reduced, so any weakly cyclically reduced conjugate to it is of length either $4$ or $5$, and clearly cannot contain a long subword of any $r \in R$.
			 
			 If $a' \in H$, then depending on whether $b'' = b'a'b \in H$, or not we have that either $b^{-1}(bab'a')b = ah \in K \setminus H$ is  weakly cyclically reduced (so $M \models bab'a' \notin b^{-1}Nb$),  or  $b^{-1}bab'a'b = ab'a'b = a b''$ (where $b'' \notin H$), which is weakly cyclically reduced, and similarly cannot lie in $N$. 
			 
			 \item Let $k,k' \in K \setminus H$ be good fellows over $H'$, and fix $l,l' \in L'$.
			 Suppose that $M^* \models klk'l' = \one$, i.e.\
			  $$M \models w= klk'l' \in N.$$
			 We can write $w$ as a reduced word. If $l \in H$, then $l \in H'$, and since $k,k'$ are good fellows over $H'$ we have $klk' \in K \setminus H$, so either $w= (klk')l'$ is a product of an element of $K \setminus H$ and $L \setminus H$ (if $l' \notin H$), or $(klk')l' \in (K \setminus H) \cdot H = K \setminus H$, we are done.
			 
			 So w.l.o.g.~$l \notin H$. (Similarly, $M^* \models k'l'kl = \one$ implies that w.l.o.g.~$l' \notin H$).
			 So any weakly cyclically reduced conjugate of $w \in M$ has length at most $5$, and contains at least $2$ entries from $K \setminus H$. But $w \in N$ implies that some weakly cyclically reduced conjugate contains a long subword of some $r \in R$, which is clearly impossible.
			 
			 \item Let $g \in M^*$, $n \in \omega$, $n>1$ be such that $g \neq 1$, $M^* \models \ g^n = \one$. (Recalling Observation \ref{obsa}) we can write $g$ as an alternating product of elements of $K\setminus H$ and $L \setminus H$
			 	$$g = g_0g_1 \cdots g_{2m-1}.$$ 
			 	W.l.o.g.~ there exists no conjugate $ygy^{-1}$ of $g$, and $g'$ with $g'(ygy^{-1})^{-1} \in N$ such that $g'$ has a shorter canonical representation than $2m$, since  we can replace $g$ with $g'$ and get a torsion element. Therefore there is no $r \in R$, $i_0 <2m$ with the sequence $g_{i_0}g_{i_0+1} \ldots g_{2m-1}g_0g_1 \ldots g_{i_0-1}$ containing a subsequence of a canonical representation of $r$ of length $j > \frac{6640}{2}+1$.
			 	
			 	Now, since 
			 	$$M \models \ (g_0g_1 \cdots g_{2m-1})^n \in N,$$
			 	there exists a cyclic conjugate of $(g_0g_1 \cdots g_{2m-1})^n$ and a subsequence $s_0s_1 \ldots s_j$ of it that is also a subsequence of a canonical form of some $s \in R$ with $j \geq \frac{7}{10} \cdot 6640$. Our assumptions above on $g_0g_1 \cdots g_{2m-1}$ easily implies
			 	$$2m \leq  \frac{6640}{2}+1,$$
			 	thus
			 	$$2m +  \frac{2}{10}\cdot 6640-1 \leq j,$$
			 	clearly $2m+330 \leq j$.
			 	This way we get that $s_\ell$ and $s_{\ell+2m}$ are \emph{not} good fellows over $H$ for each $\ell \leq 330$, but as $s$ is a cyclically reduced conjugate of $h_i^{-1}\varrho(b_ia_i,b_i'a_i)$ or of its inverse (for some $i \in I$), we get that for some $\ell \in [1, 330]$ $s_{\ell} \in Hb_i^{\pm 1}H$, $s_{\ell+2m} \in H(b'_i)^{\pm 1}H$, thus $s_\ell$ and $s_{\ell+2m}$ are good fellows over $H$, a contradiction.
			 	  \qedhere
		\end{enumerate}
	\end{PROOF}

\section{A set-theoretic interlude}\label{infcombinatorics}
\begin{definition}\label{def12}
A coloring of pairs $e:[\kappa]^2\rightarrow \theta$ is
  \emph{subadditive} if, for all $\alpha < \beta < \gamma < \kappa$, the following inequalities
  hold:
  \begin{enumerate}
    \item $e(\alpha, \gamma) \leq \max\{e(\alpha, \beta), e(\beta, \gamma)\}$;
    \item $e(\alpha, \beta) \leq \max\{e(\alpha, \gamma), e(\beta, \gamma)\}$.
  \end{enumerate}
\end{definition}
\begin{notation}\label{def41} Whenever the map $e:[\kappa]^2\rightarrow\theta$ is clear from the context,
we define for all $\gamma < \kappa$ and $i \leq \theta$, the following sets:
\begin{itemize}
\item $D^\gamma_{<i}=\{\beta < \gamma \mid e(\beta, \gamma) < i\}$, and
\item $D^\gamma_{\le i}=\{\beta < \gamma \mid e(\beta, \gamma) \le i\}$.
\end{itemize}
\end{notation}

\begin{theorem}\label{successorofregular} Suppose that $\lambda$ is an infinite regular cardinal.
Then there exist two maps $c:[\lambda^+]^2\rightarrow\lambda^+\times\lambda^+$ and $e:[\lambda^+]^2\rightarrow\lambda$ 
such that:
	\begin{itemize}
	\item $e$ is subadditive;
	\item for every $A\in[\lambda^+]^{\lambda^+}$, there exists a club $D\s\lambda^+$ such that for every $\delta\in D$,
	for every $\beta\in\lambda^+\setminus\delta$, for every $(\xi_0,\xi_1)\in\delta\times\delta$, 
	for every $i<\lambda$, there are cofinally many $\alpha<\delta$ such that $\alpha\in A$, $c(\alpha,\beta)=(\xi_0,\xi_1)$ and $e(\alpha,\beta)>i$.
	\end{itemize}
\end{theorem}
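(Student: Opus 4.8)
\emph{Strategy.} The plan is to extract both maps from a single run of \emph{walks on ordinals} \cite{todorcevic_book} on $\lambda^+$, so that the data defining $c$ and the data defining $e$ overlap and may be steered simultaneously — this is the point phrased in the introduction as the two maps being ``active over each other''. I would fix a $C$-sequence $\vec C=\langle C_\beta\mid\beta<\lambda^+\rangle$ with $\otp(C_\beta)\le\lambda$ for all $\beta$, with $C_{\gamma+1}=\{\gamma\}$, and with $C_\beta$ a club in $\beta$ of order type $\cf(\beta)$ for limit $\beta$; fix also a bijection $p\colon {}^{<\omega}\lambda\to\lambda$ and, for each $\gamma\in[\lambda,\lambda^+)$, a surjection $g_\gamma\colon\lambda\to\gamma\times\gamma$. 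Writing $\operatorname{Tr}(\alpha,\beta)=\langle\beta=\beta_0>\beta_1>\dots>\beta_n=\alpha\rangle$ for the walk ($\beta_{k+1}=\min(C_{\beta_k}\setminus\alpha)$) and $\rho_0(\alpha,\beta)=\langle\otp(C_{\beta_k}\cap\alpha)\mid k<n\rangle$ for its full code, I would let $e\colon[\lambda^+]^2\to\lambda$ be a subadditive, strongly unbounded map derived from these walks — such an $e$ exists in \zfc{} because $\lambda$ is regular (\cite{paper36}), so the first bullet of the theorem is guaranteed by construction. For $c$ I would read the walk off through the fixed surjections, roughly $c(\alpha,\beta):=g_\beta\bigl(p(\rho_0(\alpha,\beta))\bigr)$; since $\delta\times\delta\subseteq\beta\times\beta$ whenever $\delta\le\beta$, surjectivity of $\alpha\mapsto c(\alpha,\beta)$ onto $\delta\times\delta$ then reduces to the walk-code realizing a prescribed subset of $\lambda$ of size $|\delta|$. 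If the code read from $\beta$ turns out too constrained for this (its first entry is bounded by $\otp(C_\beta\cap\delta)$), I would instead feed into $g$ a genuinely $\lambda^+$-valued invariant — an oscillation of the walk against a fixed matrix of cofinal sequences, read from the pivot of the walk just below $\delta$, which is the classical device for lifting a $\lambda$-valued walk invariant to a $\lambda^+$-valued one with reflection points — and I expect this, or something equivalent, to be what the paper does.

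\emph{The club.} Given $A\in[\lambda^+]^{\lambda^+}$, I would take $D$ to be the club of $\delta<\lambda^+$ that are closure points of $A$, of $\vec C$ together with its associated Skolem functions, of $p$ and of $\langle g_\gamma\mid\gamma<\lambda^+\rangle$, and of a continuous $\in$-chain $\langle M_\xi\mid\xi<\lambda^+\rangle$ of elementary submodels of some $H(\theta)$, each of size $\lambda$, containing all of this data; thus for $\delta\in D$ we get $\sup(A\cap\delta)=\delta$, and $M_\delta\cap\lambda^+$ is an ordinal with supremum $\delta$ (indeed equal to $\delta$ when $\cf(\delta)=\lambda$).

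\emph{Localization and joint steering.} The core step is the standard localization of walks at a point of $D$: for $\delta\in D$ and any $\beta\in[\delta,\lambda^+)$, there is a fixed finite ``upper part'' of the walk from $\beta$ that is traversed on the way down to \emph{every} $\alpha<\delta$ sufficiently close to $\delta$, after which the walk either makes a single step into $\delta$ itself and continues as $\operatorname{Tr}(\alpha,\delta)$, or drops at a fixed pivot $\eta=\eta(\beta,\delta)$ with $\delta\in\acc(C_\eta)$ into the cofinal-in-$\delta$ set $C_\eta\cap\delta$ and then continues downward. In either case, as $\alpha$ ranges over a cofinal subset of $A\cap\delta$ the ``lower part'' of the walk ranges over a family rich enough — here I would use elementarity of $M_\delta$ together with the fact that $\vec C$, $p$ and $\langle g_\gamma\rangle$ belong to $M_\delta$ — that, after concatenation with the fixed upper part and decoding, $c(\alpha,\beta)$ hits every prescribed pair $(\xi_0,\xi_1)\in\delta\times\delta$ for cofinally many such $\alpha$. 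I would then superimpose the demand $e(\alpha,\beta)>i$: since $e$ is subadditive and $e(\delta,\beta)$ is a fixed value $<\lambda$, while $e(\cdot,\delta)$ — and hence, below $\delta$, also $e(\cdot,\beta)$ — is unbounded in $\lambda$ on every cofinal subset of $\delta$ (strong unboundedness of $e$ reflected through the closure point $\delta$), the set $\{\alpha<\delta\mid e(\alpha,\beta)>i\}$ is already cofinal; and because the step of the walk responsible for the value of $c$ can be arranged (by the choice of $\vec C$, or simply by reselecting which step realizes the relevant maximum) not to coincide with the step that forces $e$ above $i$, prescribing one leaves the other free, so both requirements are met on a common cofinal subset of $A\cap\delta$.

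\emph{Main obstacle.} The delicate point is exactly this last one — making $c$ and $e$ active over each other. That $\alpha\mapsto c(\alpha,\beta)$ is onto $\delta\times\delta$ and that $\alpha\mapsto e(\alpha,\beta)$ is $\lambda$-unbounded on $A\cap\delta$ are, taken separately, essentially classical walks bookkeeping; what takes work is producing a \emph{single} cofinal family of $\alpha\in A\cap\delta$ witnessing both, uniformly in $\beta\in[\delta,\lambda^+)$, $i<\lambda$ and $(\xi_0,\xi_1)\in\delta\times\delta$. Arranging that the sub-walk deciding the value of $c$ is disjoint from — and therefore non-interfering with — the sub-walk driving $e$ past $i$ will require a careful choice of $\vec C$ and a case split according to $\cf(\delta)$ and to whether $\delta\in\acc(C_\eta)$ for the pivots $\eta$ that arise; this is where I expect the bulk of the argument to lie.
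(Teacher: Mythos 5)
There is a genuine gap, and it sits exactly where you place it yourself: both halves of what you call ``joint steering'' are left as expectations rather than arguments. First, the map $c$: the property you need — that for every $A\in[\lambda^+]^{\lambda^+}$ there is a club of $\delta$ such that for all $\beta\ge\delta$ the values $c(\alpha,\beta)$ realize every pair in $\delta\times\delta$ for cofinally many $\alpha\in A\cap\delta$ — is essentially the full strength of a witness to $\lambda^+\nrightarrow[\lambda^+;\lambda^+]^2_{\lambda^+}$, a deep theorem and not something that falls out of feeding $\rho_0$-codes through surjections $g_\beta\colon\lambda\to\beta\times\beta$; you notice the obstruction (the code from $\beta$ is too constrained) and then defer to ``an oscillation of the walk against a fixed matrix \dots I expect this \dots to be what the paper does,'' which is not a proof. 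The paper does not re-derive this: it quotes the rectangular strong coloring $d\colon[\lambda^+]^2\to\lambda^+$ of Rinot--Todorcevic together with an off-the-shelf ``pump-up'' lemma (for every $A$ of full size there is $\epsilon$ such that for all $\beta\ge\epsilon$ and $\tau<\epsilon$ some $\alpha\in A\cap\epsilon$ has $d(\alpha,\beta)=\tau$), composes with a bijection $\lambda^+\leftrightarrow\lambda^+\times\lambda^+$, and then uses an $\in$-chain of elementary submodels to turn the single $\epsilon$ into club-many $\delta$ with the cofinal realization property.

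Second, the interaction with $e$. Your proposed mechanism — arranging that the walk step responsible for $c$ is disjoint from the step driving $e$ past $i$ — is unsubstantiated, and the fallback claim that $\{\alpha<\delta\mid e(\alpha,\beta)>i\}$ is ``already cofinal'' so that prescribing $c$ ``leaves the other free'' does not suffice: the statement must hold at every $\delta$ in a club, hence also at $\delta$ of cofinality $<\lambda$, where a cofinal set of $\alpha$'s realizing the prescribed $c$-value could a priori have size $<\lambda$ and be swallowed by $\{\alpha\mid e(\alpha,\beta)\le i\}$. The paper's resolution is soft and avoids any walk-level interference analysis: take $e=\rho$, which besides subadditivity is \emph{locally small} ($|\{\alpha<\beta\mid e(\alpha,\beta)\le i\}|<\lambda$ for all $\beta,i$), and choose the club $D$ (via the submodel chain, demanding $\otp(C\cap\delta)=\lambda^\delta$) so that for each $\delta\in D$, $\beta\ge\delta$, target $\tau$ and $\eta<\delta$, the set of $\alpha\in A\cap\delta$ above $\eta$ with $d(\alpha,\beta)=\tau$ has cardinality exactly $\lambda$; one then simply picks such an $\alpha$ outside the $<\lambda$-sized bad set for $e$. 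Without (a) an actual source for the onto-$\delta\times\delta$ coloring and (b) the size-$\lambda$-versus-size-$<\lambda$ avoidance argument (or a worked-out substitute), your outline does not yet constitute a proof.
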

\begin{proof} Let $e$ be the function $\rho:[\lambda^+]^2\rightarrow\lambda$ defined in \cite[\S9.1]{todorcevic_book}.
  By \cite[Lemma~9.1.1]{todorcevic_book}, $e$ is subadditive.
  By \cite[Lemma~9.1.2]{todorcevic_book}, $e$ is also \emph{locally small},
  i.e., $|D^\gamma_{\le i}|<\lambda$ for all $\gamma<\lambda^+$ and $i<\lambda$.
  
  Next, by \cite{paper14}, we may fix a coloring $d:[\lambda^+]^2\rightarrow\lambda^+$ witnessing $\lambda^+\nrightarrow[\lambda^+;\lambda^+]^2_{\lambda^+}$.
  By \cite[Lemma~3.16]{paper53}, this means that for every $A\in[\lambda^+]^{\lambda^+}$, there exists an $\epsilon<\lambda^+$ such that, for all $\beta\in\lambda^+\setminus\epsilon$ and $\tau<\epsilon$,
there exists $\alpha\in A\cap\epsilon$ such that $d(\alpha,\beta)=\tau$.
Fix a bijection $\pi:\lambda^+\leftrightarrow\lambda^+\times\lambda^+$,
and then let $c$ be the composition $\pi\circ d$.

We now verify that $c$ and $e$ are as sought.

\begin{claim} Let $A\in[\lambda^+]^{\lambda^+}$. Then there exists a club $D\s\lambda^+$ such that for every $\delta\in D$,
	for every $\beta\in\lambda^+\setminus\delta$,
	for every $(\xi_0,\xi_1)\in\delta\times\delta$, 
	for every $i<\lambda$, there are cofinally many $\alpha<\delta$ such that $\alpha\in A$, $c(\alpha,\beta)=(\xi_0,\xi_1)$ and $e(\alpha,\beta)>i$.
\end{claim}
\begin{proof} Let $\langle M_\gamma\mid \gamma<\lambda^+\rangle$ be a sequence of elementary submodels of $H_{\lambda^{++}}$,
each of size $\lambda$, such that $\{A,d,\pi\}\in M_0$, such that $M_\gamma\in M_{\gamma+1}$ for every $\gamma<\lambda^+$,
and such that $M_\delta=\bigcup_{\gamma<\delta}M_\gamma$ for every limit nonzero $\delta<\lambda^+$.
It follows that $C=\{\gamma<\lambda^+\mid M_\gamma\cap\lambda^+=\gamma\}$ is a a club in $\lambda^+$.

We claim that the following club is as sought:
$$D=\{\delta<\lambda^+\mid \otp(C\cap\delta)=\lambda^\delta\}.$$
To this end, let $\delta\in D$, $\beta\in\lambda^+\setminus\delta$,
$(\xi_0,\xi_1)\in\delta\times\delta$, $i<\lambda$, and $\eta<\delta$. We shall find an $\alpha\in A\cap\delta$ above $\eta$ such that $c(\alpha,\beta)=(\xi_0,\xi_1)$ and $e(\alpha,\beta)>i$.

As $\delta$ in particular belongs to $C$, $\pi[\delta]=\delta\times\delta$,
so we may fix some $\tau<\delta$ such that $\pi(\tau)=(\xi_0,\xi_1)$.
For every $\gamma\in C\setminus\tau$, the set $A_\gamma=A\setminus\gamma$ is in $[\lambda^+]^{\lambda^+}\cap M_{\gamma+1}$,
and hence there exists $\epsilon\in\lambda^+\cap M_{\gamma+1}$ such that, for all $\beta'\in\lambda^+\setminus\epsilon$ and $\tau'<\epsilon$,
there exists $\alpha'\in A_\gamma\cap\epsilon$ such that $d(\alpha',\beta')=\tau'$.
In particular, we may pick $\alpha_\gamma\in A\cap M_{\gamma+1}\setminus\gamma$ such that $d(\alpha_\gamma,\beta)=\tau$.
It follows that $\gamma\mapsto \alpha_\gamma$ is a strictly increasing function from $C\cap\delta$ to $A\cap\delta$.
As $\delta\in D$, we infer that $A'=\{ \alpha\in A\cap\delta\mid \eta<\alpha\ \&\ d(\alpha,\beta)=\tau\}$ has size $\lambda$.
As $e$ is locally small, we may now pick $\alpha\in A'\setminus D^\beta_{\le i}$. Then $\alpha\in A\cap\delta$ above $\eta$, $e(\alpha,\beta)>i$
and $c(\alpha,\beta)=\pi(d(\alpha,\beta))=\pi(\tau)=(\xi_0,\xi_1)$, as sought.
\end{proof}
This completes the proof.
\end{proof}

\begin{remark} The preceding result does not generalize to the case when $\lambda$ is a singular cardinal.
Indeed, it follows from \cite[Lemma~3.38]{paper36} that if $\lambda$ is the singular limit of strongly compact cardinals,
then for every cardinal $\theta\le\lambda$, for every subadditive map $c:[\lambda^+]^2\rightarrow\theta$,
there must exist an $A\in[\lambda^+]^{\lambda^+}$ such that $\sup\{ c(\alpha,\beta)\mid \alpha<\beta\text{ in }A\}<\theta$.
\end{remark}

\begin{theorem}\label{generalcase} Suppose that $\theta<\kappa$ are infinite regular cardinals, and $\square(\kappa)$ holds.

Then there exist two maps $c:[\kappa]^2\rightarrow\kappa$ and $e:[\kappa]^2\rightarrow\theta$ 
such that:
	\begin{itemize}
	\item $e$ is subadditive;
	\item for every $A\in[\kappa]^\kappa$, there exists a club $D\s\kappa$ such that for every $\delta\in D$,
	for every $\beta\in\kappa\setminus\delta$, for every $(\xi_0,\xi_1)\in\delta\times \delta$, 
	for every $i<\theta$, there are cofinally many $\alpha<\delta$ such that $\alpha\in A$, $c(\alpha,\beta)=(\xi_0,\xi_1)$ and $e(\alpha,\beta)>i$.
	\end{itemize}
\end{theorem}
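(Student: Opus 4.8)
The plan is to mimic the proof of Theorem~\ref{successorofregular}, but to replace the two external ingredients — the subadditive locally small map $\rho$ on $[\lambda^+]^2$ and the coloring witnessing $\lambda^+\nrightarrow[\lambda^+;\lambda^+]^2_{\lambda^+}$ — by their analogues available from $\square(\kappa)$ via walks on ordinals. First I would invoke the theory of walks along a $\square(\kappa)$-sequence to obtain a subadditive, locally small map $e:[\kappa]^2\rightarrow\theta$; concretely, one fixes a $\square(\kappa)$-sequence and composes the ordinal-distance-type function $\rho_2$ (or the relevant variant from \cite{todorcevic_book}, \cite{paper36}) with a collapse to $\theta$, using that $\theta<\kappa$ are regular. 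The key properties I need from $e$ are exactly as before: subadditivity (clauses (1),(2) of Definition~\ref{def12}), and \emph{local smallness} in the sense that $|D^\gamma_{\le i}|<\kappa$ for all $\gamma<\kappa$ and $i<\theta$ — in fact under $\square(\kappa)$ with $\theta<\kappa$ one gets that $e$ is moreover \emph{strongly unbounded}, which is what guarantees local smallness. Second, I would fix a coloring $d:[\kappa]^2\rightarrow\kappa$ that, via the same walks machinery together with $\square(\kappa)$, witnesses the strong negative partition relation $\kappa\nrightarrow[\kappa;\kappa]^2_\kappa$; by the reformulation of \cite[Lemma~3.16]{paper53}, this yields: for every $A\in[\kappa]^\kappa$ there is $\epsilon<\kappa$ such that for all $\beta\in\kappa\setminus\epsilon$ and $\tau<\epsilon$ there is $\alpha\in A\cap\epsilon$ with $d(\alpha,\beta)=\tau$. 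As before, I fix a bijection $\pi:\kappa\leftrightarrow\kappa\times\kappa$ and set $c=\pi\circ d$.

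The verification step is then essentially identical to the claim proved inside Theorem~\ref{successorofregular}. Given $A\in[\kappa]^\kappa$, I would build a continuous $\in$-increasing chain $\langle M_\gamma\mid\gamma<\kappa\rangle$ of elementary submodels of $H_{\kappa^+}$, each of size $<\kappa$ (of size $|\gamma|+\theta$, say, taking care that this is $<\kappa$), with $\{A,d,\pi,e\}\in M_0$, and put $C=\{\gamma<\kappa\mid M_\gamma\cap\kappa=\gamma\}$, a club. I then take $D$ to be the (club) set of $\delta\in C$ which are closure points of $C$ of sufficiently high order type — the precise requirement being that $\delta$ is closed under the map $\gamma\mapsto(\text{the }\epsilon\text{ witnessing the partition property for }A\setminus\gamma)$ and that $\otp(C\cap\delta)$ is large enough that the strictly increasing map $\gamma\mapsto\alpha_\gamma$ (constructed exactly as in the quoted claim, picking $\alpha_\gamma\in A\cap M_{\gamma+1}\setminus\gamma$ with $d(\alpha_\gamma,\beta)=\tau$, where $\tau<\delta$ is chosen with $\pi(\tau)=(\xi_0,\xi_1)$ using $\pi[\delta]=\delta\times\delta$) produces a subset $A'=\{\alpha\in A\cap\delta\mid \eta<\alpha,\ d(\alpha,\beta)=\tau\}$ of size $\kappa$... here I must be slightly careful: unlike the successor case, I cannot guarantee $|A'|=\kappa$ merely from $\otp(C\cap\delta)$ being large — instead I want $|A'|\ge\theta^+$, or more robustly I should arrange $A'$ to be cofinal in $\delta$. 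The cleanest route is to demand in the definition of $D$ that $\delta$ is a limit point of $C$ with $\mathrm{cf}(\delta)>\theta$, run the argument to get that for cofinally many $\gamma\in C\cap\delta$ there is such an $\alpha_\gamma<\delta$, hence $A'$ is cofinal in $\delta$ of cofinality $>\theta$; then by local smallness of $e$, $A'\setminus D^\beta_{\le i}$ is still cofinal in $\delta$ (as $D^\beta_{\le i}$ has size $<\kappa$, but more to the point its trace below $\delta$ cannot be cofinal in $\delta$ since... ) —

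\begin{remark}
The subtle point deserves a cleaner treatment: I really want, for each fixed $i<\theta$, that $D^\beta_{\le i}\cap\delta$ is bounded below $\delta$. This does \emph{not} follow from $|D^\beta_{\le i}|<\kappa$ alone when $\mathrm{cf}(\delta)<\kappa$. So the right move is: since $e$ obtained from $\square(\kappa)$ is subadditive and \emph{strongly unbounded} (not merely locally small), for every $\delta<\kappa$ the set $\{\sup(D^\beta_{\le i}\cap\delta)\mid \beta<\kappa\}$ is bounded below $\delta$ whenever $\mathrm{cf}(\delta)>\theta$ — equivalently $e(\cdot,\beta)$ restricted to a tail of $\delta$ exceeds $i$. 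Building this boundedness requirement into the club $D$ (it defines a club by Fodor-type reflection) then makes $A'\setminus D^\beta_{\le i}$ cofinal in $\delta$, and picking any $\alpha$ in it above $\eta$ finishes the proof: $\alpha\in A\cap\delta$, $\alpha>\eta$, $e(\alpha,\beta)>i$, and $c(\alpha,\beta)=\pi(d(\alpha,\beta))=\pi(\tau)=(\xi_0,\xi_1)$.
\end{remark}

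\noindent The main obstacle, therefore, is not the elementary-submodel bookkeeping (which transfers verbatim) but handling the interaction between the possibly small cofinality of $\delta\in D$ and the local behavior of the subadditive map $e$: in the successor-of-regular case every relevant $\delta$ could be taken of cofinality $\lambda=\mathrm{cf}(\lambda^+{-}\text{sized objects})$ and $D^\beta_{\le i}$ of size $<\lambda$ was automatically bounded in such $\delta$, whereas for a general regular $\kappa$ one must explicitly extract from $\square(\kappa)$ (through the strong unboundedness / ``strongly unbounded subadditive'' properties established in \cite{paper36}) that the sets $D^\beta_{\le i}$ are ``locally bounded'' on a club of $\delta$ of cofinality $>\theta$, and then intersect that club into $D$.
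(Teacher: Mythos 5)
There is a genuine gap, and it sits exactly where you flagged discomfort: the mechanism for securing $e(\alpha,\beta)>i$. First, ``local smallness'' of a map $e:[\kappa]^2\rightarrow\theta$ in any usable sense is impossible once $\theta^+<\kappa$: the sets $D^\beta_{\le i}$ ($i<\theta$) increase with $i$ and their union is $\beta$, so for $|\beta|>\theta$ some $D^\beta_{\le i}$ already has size $>\theta$; strong unboundedness does not restore this, and the trivial bound $|D^\beta_{\le i}|<\kappa$ is useless because your candidate set $A'$ lives inside $\delta$ and also has size $<\kappa$. Second, your fallback claim --- that for each fixed $i<\theta$ the trace $D^\beta_{\le i}\cap\delta$ is bounded below $\delta$ whenever $\cf(\delta)>\theta$ --- is provably false: for any $\beta\ge\delta$ we have $\bigcup_{i<\theta}(D^\beta_{\le i}\cap\delta)=\delta$, and a union of $\theta$ many bounded subsets of $\delta$ is bounded when $\cf(\delta)>\theta$, so for some (hence every sufficiently large) $i<\theta$ the set $D^\beta_{\le i}\cap\delta$ is unbounded in $\delta$. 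Since the theorem quantifies over \emph{all} $i<\theta$, the strategy ``avoid a bounded trace'' cannot work precisely at the points you restrict to; and in any case the set of $\delta\in C$ with $\cf(\delta)>\theta$ is not closed, so the $D$ you describe would not even be a club, as the statement requires.

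The paper's proof gets around this with an idea absent from your outline: it never tries to make $e(\cdot,\beta)$ large directly. It takes a subadditive $e$ witnessing $\U(\kappa,2,\theta,2)$ from \cite{paper36} and, crucially, a coloring $d$ witnessing $\pr_1(\kappa,\kappa,\kappa,\theta^+)$ from \cite{paper52}, whose reformulation operates on pairwise disjoint families of $\theta$-sized blocks rather than on single points. A reflection claim (proved from subadditivity plus a stationarity argument) gives club many $\gamma$ with $e[(A\cap\gamma)\times\{\gamma\}]$ cofinal in $\theta$; one attaches to suitable $\gamma$ a block $a_\gamma\in[A\cap\gamma]^\theta$ with $e[a_\gamma\times\{\gamma\}]$ cofinal in $\theta$, uses the $\pr_1$-property inside an elementary submodel to find $\gamma$ with $a_\gamma\s\delta$ and $d[a_\gamma\times\{\beta\}]=\{\tau\}$, and then chooses $\alpha\in a_\gamma$ with $e(\alpha,\gamma)>\max\{i,e(\gamma,\beta)\}$; subadditivity through the pivot $\gamma$ yields $e(\alpha,\beta)>i$, while block-homogeneity of $d$ preserves the colour no matter which element of the block is selected. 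This pivot-plus-block mechanism is the new content beyond Theorem~\ref{successorofregular}, and it also explains why the singleton rectangular relation you propose would not suffice even if available under $\square(\kappa)$: with a single candidate $\alpha$ per colour requirement there is no freedom left to make $e$ large.
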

\begin{proof} By Theorem~\ref{successorofregular}, we may assume that $\theta^+<\kappa$. 
It thus follows from \cite[Theorem~A$'$]{paper52} that we may fix a coloring $d:[\kappa]^2\rightarrow\kappa$
witnessing $\pr_1(\kappa,\kappa,\kappa,\theta^+)$.
By \cite[Lemma~4.2]{paper45}, this means that for every $\tau<\kappa$,
for every pairwise disjoint subfamily $\mathcal A\s[\kappa]^\theta$  of size $\kappa$,
there exists $\epsilon<\kappa$ such that, for every $b\in[\kappa\setminus\epsilon]^\theta$, for some $a\in\mathcal A\cap\mathcal P(\epsilon)$,
$d[a\times b]=\{\tau\}$.
Fix a bijection $\pi:\kappa\leftrightarrow\kappa\times\kappa$,
and then let $c$ be the composition $\pi\circ d$.

Next, as a second application of $\square(\kappa)$, by \cite[Theorem~A]{paper36}, we may pick a subadditive witness $e:[\kappa]^2\rightarrow\theta$ to $\U(\kappa,2,\theta,2)$.
The latter means that $e``[S]^2$ is cofinal in $\theta$ for every $S\in[\kappa]^\kappa$.
\begin{claim}\label{claim341} Let $A\in[\kappa]^\kappa$.
There are club many $\gamma<\kappa$ such that $e[(A\cap\gamma)\times\{\gamma\}]$ is cofinal in $\theta$.
\end{claim}
\begin{proof} Towards a contradiction, suppose that $A\in[\kappa]^\kappa$ is a counterexample,
so that  $\{\gamma<\kappa\mid \sup(e[(A\cap\gamma)\times\{\gamma\}])<\theta\}$ is stationary.
Fix $\tau_0<\theta$ such that $S_0=\{\gamma\in\acc^+(A)\mid \sup(e[(A\cap\gamma)\times\{\gamma\}])=\tau_0\}$ is stationary.
For every $\gamma\in S_0$, let $\alpha_\gamma=\min(A\setminus(\gamma+1))$.
Then find $\tau_1<\theta$ such that $S_1=\{\gamma\in S_0\mid e(\gamma,\alpha_\gamma)=\tau_1\}$ is stationary.
As $e$ is subadditive, it follows that for every pair $\gamma<\gamma'$ of ordinals in $S_1$, $e(\gamma,\gamma')\le\max\{e(\gamma,\alpha_\gamma),e(\alpha_\gamma,\gamma')\}=\max\{\tau_1,\tau_0\}$.
So, $e``[S_1]^2$ is bounded in $\theta$,
contradicting the fact that $e$ witness $\U(\kappa,2,\theta,2)$.
\end{proof}

We now verify that $c$ and $e$ are as sought.

\begin{claim} Let $A\in[\kappa]^{\kappa}$. Then there exists a club $D\s\kappa$ such that for every $\delta\in D$,
	for every $\beta\in\kappa\setminus\delta$,
	for every $(\xi_0,\xi_1)\in\delta\times\delta$, 
	for every $i<\theta$, there are cofinally many $\alpha<\delta$ such that $\alpha\in A$, $c(\alpha,\beta)=(\xi_0,\xi_1)$ and $e(\alpha,\beta)>i$.
\end{claim}
\begin{proof} For every $\eta<\kappa$, let $C_\eta$ be the club given by Claim~\ref{claim341} with respect to the set $A\setminus\eta$.
In particular, for every $\gamma\in C_\eta$, 
$e[(A\cap(\eta,\gamma))\times\{\gamma\}]$ is cofinal in $\theta$.
Consider the club $C=\diagonal_{\eta<\kappa}C_\eta$.
Let $\Gamma$ denote the collection of all successive elements of $C$.
It follows that for every $\gamma\in\Gamma$, we may pick $a_\gamma\in[A\cap\gamma]^\theta$ such that:
\begin{itemize}
\item for every $\alpha\in a_\gamma$, $\sup(C\cap\gamma)<\alpha$;
\item $e[a_\gamma\times\{\gamma\}]$ is cofinal in $\theta$.
\end{itemize}

Let $\langle M_\gamma\mid \gamma<\kappa\rangle$ be a sequence of elementary submodels of $H_{\kappa^+}$,
each of size less than $\kappa$, such that $\{\langle a_\gamma\mid \gamma\in\Gamma\rangle,d,\pi\}\in M_0$, such that $M_\gamma\in M_{\gamma+1}$ for every $\gamma<\kappa$,
and such that $M_\delta=\bigcup_{\gamma<\delta}M_\gamma$ for every limit nonzero $\delta<\kappa$.
We claim that the following club is as sought:
$$D=\{\delta<\kappa\mid M_\delta\cap\kappa=\delta\}.$$
To this end, let $\delta\in D$, $\beta\in\kappa\setminus\delta$,
$(\xi_0,\xi_1)\in\delta\times\delta$, $i<\theta$, and $\eta<\delta$. We shall find an $\alpha\in A\cap\delta$ above $\eta$ such that $c(\alpha,\beta)=(\xi_0,\xi_1)$ and $e(\alpha,\beta)>i$.

As $\delta\in D$, it is the case that $\pi[\delta]=\delta\times\delta$,
so we may fix some $\tau<\delta$ such that $\pi(\tau)=(\xi_0,\xi_1)$.
Set $\eta^*=\min(\Gamma\setminus\max\{\tau,\eta\})$.
As $\Gamma\in M_\delta\cap[\kappa]^\kappa$, 
the collection $\mathcal A=\{ a_\gamma\mid \gamma\in\Gamma, \gamma>\eta^*\}$ is a pairwise disjoint subfamily of $[\kappa]^\theta$ of size $\kappa$,
lying in $M_\delta$. It thus follows that there exists $\epsilon\in M_\delta\cap\kappa$ such that, for every $b\in[\kappa\setminus\epsilon]^\theta$, for some $a\in\mathcal A\cap\mathcal P(\epsilon)$,
$d[a\times b]=\{\tau\}$. As $\epsilon<\delta\le\beta$, we may now pick $\gamma\in\Gamma$ with $\gamma>\eta^*$
such that $a_\gamma\in \mathcal P(\delta)$ and $d[a_\gamma\times\{\beta\}]=\{\tau\}$.
Set $j=e(\gamma,\beta)$. Recalling that $e[a_\gamma\times\{\gamma\}]$ is cofinal in $\theta$,
we may now pick $\alpha\in a_\gamma$ with $e(\alpha,\gamma)>\max\{i,j\}$.
Note that $\eta\le\eta^*\le\sup(C\cap\gamma)<\alpha<\gamma<\delta$.
In addition, since $e$ is subadditive, $$\max\{i,j\}<e(\alpha,\gamma)\le\max\{e(\alpha,\beta),e(\gamma,\beta)\}\le\{e(\alpha,\beta),j\},$$
and hence $i<e(\alpha,\beta)$.
Evidently, $c(\alpha,\beta)=\pi(d(\alpha,\beta))=\pi(\tau)=(\xi_0,\xi_1)$.
Altogether, $\alpha$ is an element of $A\cap\delta$ above $\eta$ satisfying the required properties.
\end{proof}
This completes the proof.
	\end{proof}

\section{A construction of a Shelah group}\label{sec4}

This section is devoted to proving the core result of this paper.
The assumptions of the upcoming theorem are motivated by the results of the previous section.

\begin{theorem}\label{thm41} Suppose:
	\begin{itemize}
	\item $\theta<\kappa$ is a pair of infinite regular cardinals;
	\item $c_0:[\kappa]^2\rightarrow\kappa$ and $c_1:[\kappa]^2\rightarrow\kappa$ are two colorings;
	\item $e:[\kappa]^2\rightarrow\theta$ is a subadditive coloring;
	\item for every $A\in[\kappa]^\kappa$, there exists a club $D\s\kappa$ such that for every $\delta\in D$,
	for every $\beta\in A\setminus\delta$,
	for every $(\xi_0,\xi_1)\in\delta\times\delta$, 
	for every $i<\theta$, there are cofinally many $\alpha\in A\cap\delta$ such that 
	$$c_0(\alpha,\beta)=\xi_0\ \&\ c_1(\alpha,\beta)=\xi_1 \ \&\ e(\alpha,\beta)>i.$$
	\end{itemize}
	
Then there exists a torsion-free Shelah group $G$ of size $\kappa$.
\end{theorem}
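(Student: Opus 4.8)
The plan is to build $G$ as the union of a continuous $\s$-increasing chain $\langle G_\gamma\mid\gamma\le\kappa\rangle$ of torsion-free groups with $|G_\gamma|<\kappa$ for $\gamma<\kappa$ and $G=G_\kappa$, arranging (using that $\kappa$ is regular) that the underlying set of each $G_\gamma$ is an initial segment of $\kappa$ determined by $\gamma$, so that $\{\delta<\kappa\mid G_\delta\text{ is a subgroup with underlying set }\delta\}$ is a club. We start from a suitable torsion-free $G_0$, take unions at limits, and maintain as invariants that each $G_\gamma$ is torsion-free and that the filtration is by malnormal subgroups, $G_\eta\mal G_\gamma$ for $\eta<\gamma$ (this is precisely what keeps Lemma~\ref{shhe} applicable). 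At a successor step $\gamma\mapsto\gamma+1$ we fix a suitable subgroup $H\le K:=G_\gamma$, a torsion-free extension $L$ of $H$ with $L\cap G_\gamma=H$ and $H\mal L$, and a family $S_\gamma=\{(h_\alpha,a_\alpha,b_\alpha,b_\alpha')\mid\alpha<\gamma\}$ of pumping tuples meeting the hypotheses of Lemma~\ref{shhe}, and we set $G_{\gamma+1}:=M^*=(K\ast_HL)/N$, where $N$ is generated by the symmetric closure of $\{h_\alpha^{-1}\varrho(b_\alpha a_\alpha,b_\alpha'a_\alpha)\mid\alpha<\gamma\}$. Clause~(A) of Lemma~\ref{shhe} gives $G_\gamma\hookrightarrow G_{\gamma+1}$, clause~(B) propagates malnormality of the filtration, and \ref{tfree} keeps $G_{\gamma+1}$ torsion-free; clauses (C)--(F) are exactly the preservation statements needed to verify, at the next stage, that the ``good fellows''/non-membership requirements of hypothesis~(2) of Lemma~\ref{shhe} persist.

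\textbf{The pumping tuples.} Fix a coherent coding of the elements of each $G_\gamma$ by pairs of ordinals (available on the club above), and have the tuple indexed by $\alpha<\gamma$ be designed so that: $a_\alpha$ is an explicit short word built from $\alpha$; $h_\alpha\in H$ is the element coded by $(c_0(\alpha,\gamma),c_1(\alpha,\gamma))$ when that pair lies in the coding domain (and $h_\alpha=\one$ otherwise); $b_\alpha,b_\alpha'\in L\setminus H$ are explicit short words whose double-coset type over $H$ depends only on $e(\alpha,\gamma)$; and the resulting relator $h_\alpha^{-1}\varrho(b_\alpha a_\alpha,b_\alpha'a_\alpha)$ both satisfies $C'(\tfrac1{10})$ and, once $\varrho$ is unravelled, forces $h_\alpha$ to equal a group word of length exactly $10120$ in the letters of $a_\alpha,b_\alpha,b_\alpha'$, the fixed lengths of these words being chosen so that this total is $10120$ regardless of $\alpha$ (that $\varrho(ba,b'a)$ is itself an alternating word of length $6640$ already appears in the proof of Lemma~\ref{shhe}). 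Here $c_0,c_1$ serve to route which pairs are assigned which targets, while $e$ plays a double role: its subadditivity is the surrogate for the $\lambda$-filtrations that are available for free when $\kappa$ is a successor of a regular cardinal, and thus is what pushes the construction through at limit $\kappa$; and the dependence of the type of $b_\alpha,b_\alpha'$ on the level $e(\alpha,\gamma)$ is what makes hypothesis~(2) of Lemma~\ref{shhe} verifiable, since tuples of a common level with a common target-code are then separated by $a_\alpha\ne a_{\alpha'}$ (case $(\circleddash)_b$), while tuples of distinct levels land in the ``good fellows'' cases. \emph{Carrying out this verification of hypothesis~(2) at every stage --- the bookkeeping of good fellows, done in concert with the two maps $c$ and $e$ being ``active over each other'' as isolated in Section~\ref{infcombinatorics} --- is the technical heart of the proof and the step I expect to be the main obstacle; the remaining clauses of Lemma~\ref{shhe} are tailored exactly to feed this recursion.}

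\textbf{Verification of Shelah-ness.} Let $X\in[G]^\kappa$ and $g\in G$; it suffices to show $g\in X^{10120}$. Apply the fourth hypothesis of the theorem to $A:=X$ to get a club $D\s\kappa$, and intersect it with the club of $\delta<\kappa$ for which $G_\delta$ is a subgroup with underlying set $\delta$ that is closed under the fixed coding. Pick $\delta$ in the resulting club with $g\in G_\delta$, let $(\xi_0,\xi_1)\in\delta\times\delta$ be the code of $g$, and pick $\beta\in X\setminus\delta$ (possible as $|X|=\kappa>|\delta|$). Since $g$ is a finite word in generators all of whose indices lie in $\delta$, by subadditivity of $e$ we may choose a single $i<\theta$ (namely the maximum of the values $e(\zeta,\beta)$ over those finitely many generators $\zeta$) such that any pumping event attached to a pair $(\cdot,\beta)$ of level $>i$ genuinely realizes its assigned target and is insulated from this particular representation of $g$ inside $G_\delta$. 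By the hypothesis there are cofinally many $\alpha\in X\cap\delta$ with $c_0(\alpha,\beta)=\xi_0$, $c_1(\alpha,\beta)=\xi_1$ and $e(\alpha,\beta)>i$; fix one such $\alpha$. As $\alpha<\delta<\beta$, at stage $\beta$ the tuple indexed by $\alpha$ had $h_\alpha=g$, so in $G$ we obtain $g=\varrho(b_\alpha a_\alpha,b_\alpha'a_\alpha)$, which unravels to a word of length exactly $10120$; by the design of the tuples, together with $\alpha,\beta\in X$ and the insulation just arranged, every letter of this word lies in $X$. Hence $g\in X^{10120}$. As $g$ and $X$ were arbitrary, $X^{10120}=G$ for every $X\in[G]^\kappa$; in particular $G$ has no proper subgroup of size $\kappa$, so $G$ is a Shelah group. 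Finally $|G|=\kappa$ by the size bookkeeping and the regularity of $\kappa$, and $G$ is torsion-free by \ref{tfree}, completing the proof.
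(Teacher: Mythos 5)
Your skeleton matches the paper's in outline (a transfinite construction whose successor steps are applications of Lemma~\ref{shhe} with relators $h^{-1}\varrho(ba,b'a)$ steered by $c_0,c_1$ and levelled by $e$, followed by a club argument for Shelah-ness), but the verification step contains a genuine gap that your design of the pumping tuples cannot repair. In your setup the tuple indexed by $\alpha$ consists of \emph{auxiliary} elements: $a_\alpha$ is ``an explicit short word built from $\alpha$'' and $b_\alpha,b'_\alpha$ are explicit words in $L\setminus H$ whose type depends only on $e(\alpha,\gamma)$. The relator then expresses the target $h_\alpha$ as a word in $a_\alpha,b_\alpha,b'_\alpha$ --- but these letters are not elements of the arbitrary set $X\in[G]^\kappa$, and your closing claim that ``every letter of this word lies in $X$'' is exactly the point that needs proof and is false for the design as described. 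An element of $X$ whose first appearance is at stage $\beta$ is an arbitrary member of $G_{\beta+1}\setminus G_\beta$, not a generator and not one of your prefabricated words, so nothing ties the relator to it. This is the central difficulty of Shelah's construction, and the paper resolves it by indexing the tuples not by ordinals but by pairs $\sigma=(l,k)$, where $l$ runs over transversals $T^\gamma_{<i,\beta}$ of the double-coset equivalence relations $E^\gamma_{<i,\beta}$ (so that \emph{every} future element $z_\beta$ of any $X$ is anticipated modulo lower data: $z_\beta=y_0t^{\varp}y_1$ for some transversal $t$), $k$ is an actual earlier element at the correct $e$-level, and $c_1$ (through the surjection $\vec q$) codes the correction data $(y_0,y_1,d,\varp)$, so that $b_\sigma=y_0l^{\varp}y_1d$ evaluates to $z_\beta h$ and $a_\sigma=z_\alpha$ with $z_\beta,h,z_\alpha\in X$. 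Only then does $\varrho(b_\sigma a_\sigma,b'_\sigma a_\sigma)$ unravel to a word in elements of $X$, and indeed the exponent $10120=3240\cdot3+80\cdot5$ arises precisely because $ba$ and $b'a$ are products of $3$ and $5$ elements of $X$; your scheme, in which $c_0,c_1$ jointly code only the target $g$, has no channel left to transmit this correction data.

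A second, related omission: you perform a single amalgamation per successor stage over an unspecified $H\le G_\gamma$, whereas the paper's stage $\gamma\to\gamma+1$ is itself a recursion of length $\theta$ along the fibers $D^\gamma_{<i}=\{\beta<\gamma\mid e(\beta,\gamma)<i\}$, building $G_{D^\gamma_{\le i}\cup\{\gamma\}}$ from $K=G_{D^\gamma_{\le i}}$ and $L=G_{D^\gamma_{<i}\cup\{\gamma\}}$ over $H=G_{D^\gamma_{<i}}$, with the coherence promises \ref{maine2}--\ref{indhyp2} and Lemma~\ref{clcor} maintained throughout. This inner recursion is not bookkeeping you can defer: it is where the subadditivity of $e$ is actually used (both to verify the good-fellows hypotheses of Lemma~\ref{shhe} via Claims~\ref{goodfella} and \ref{claim49}, and in the verification, where $e(\alpha,\beta)>i^\beta_h$ together with subadditivity places $z_\alpha$ inside $G_{D^\beta_{\le e(\alpha,\beta)}}$ so that the relevant relator exists at level $i=e(\alpha,\beta)$ of stage $\beta$). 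Your ``insulation'' paragraph gestures at this but supplies no mechanism, and you yourself flag the verification of hypothesis~(2) of Lemma~\ref{shhe} as an unresolved obstacle; in the paper that verification (Lemma~\ref{ellem}) depends on the transversal/double-coset machinery and the level-by-level structure that your proposal lacks. So the proposal, as it stands, does not yield the theorem.
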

Before embarking on the proof, we make a few promises.
\subsection{Promises}
\begin{enumerate}[label = $(p)_\arabic*$, ref = $(p)_\arabic*$]
\item We shall recursively construct distinguished group elements $\langle x_\alpha\mid \alpha<\kappa\rangle$ generating the whole group $G$;
\item For every set $A\s\kappa$, $G_A$ will denote the group generated by $\{ x_\alpha\mid \alpha\in A\}$, so that $G=G_\kappa$;
\item\label{underlying} For every $\gamma\le\kappa$, the underlying set of $G_\gamma$ will be an initial segment of $\kappa$;
\item\label{maine2} For all $\gamma<\kappa$ and $i<\theta$, $G_{D^\gamma_{< i} \cup \{\gamma\}} \cap G_{\gamma} = G_{D^\gamma_{< i}}$;\footnote{Recall Notation~\ref{def41}.}
\item\label{maine} For all $\gamma<\kappa$ and $i<\theta$, $G_{D^\gamma_{< i} \cup \{\gamma\}} \cap G_{D^\gamma_{\leq i}} = G_{D^\gamma_{< i}}$;
\item \label{indhyp2} For all $\gamma< \kappa$, $i < \theta$, $ G_{D^\gamma_{< i}} \mal G_{D^\gamma_{< i} \cup \{\gamma\}}$;
\item For all $\gamma<\kappa$ and nonzero $i<\theta$,
 $G_{D^\gamma_{\le i}}$ is the group $M^*$ given by Lemma~\ref{shhe} when invoked with the groups
				\begin{itemize}
					\item $H = G_{D^\gamma_{< i}}$,
					\item  $ K = G_{D^\gamma_{\le i}}$,
					\item  $L = G_{D^\gamma_{< i} \cup \{\gamma\}}$,
				\end{itemize} 
and an appropriate system $S$.
\item\label{tmatrix} We shall also recursively construct a matrix of sets $\langle T^\gamma_{< i, \beta}\mid \beta\le\gamma<\kappa,~ i<\theta\rangle$ with the property that:
	\begin{enumerate}[label = $(t)_\arabic*$, ref = $(t)_\arabic*$]
		\item\label{fc} For all $\beta\le\gamma<\kappa$ and $ i<j<\theta$, $T^\gamma_{< i,\beta } \subseteq T^\gamma_{< j,\beta}\s G_{D^\gamma_{< i} \cup \{\gamma\}}$, and 
		\item\label{sc} For all $\alpha < \beta \leq \gamma<\kappa$ and $i < \theta$, $T^\gamma_{< i,\alpha } \supseteq T^\gamma_{< i,\beta}$.
	\end{enumerate}
\end{enumerate}

At the outset, we also agree on the following piece of notation.
\begin{notation}For all $\gamma<\kappa$ and $g\in G_\gamma$, let 
		$$i^\gamma_g = \min \{i < \theta\mid g \in  G_{D^{\gamma}_{\le i}}\}.$$
We shall also record the first appearance of an element $g\in G_\kappa$, by letting
		$$ \tau_g = \min \{\beta< \kappa\mid g \in G_{\beta+1}\}.$$
As $g\in G_{\tau_g\cup\{\tau_g\}}$ and $\tau_g=\bigcup_{i<\theta}D^{\tau_g}_{\le i}$, it also makes sense to define 
		$$ i_g = \min \{i < \theta\mid g \in  G_{D^{\tau_g}_{\le i} \cup \{\tau_g\}}\}.$$
\end{notation}
Note that since $D^\gamma_{< 0}=\emptyset$, the group $\langle x_\gamma\rangle$ generated by $x_\gamma$ will have a trivial intersection with $G_\gamma$.
Another observation worth recording is the following.

\begin{observation} Let $\gamma<\kappa$ and $i< \theta$. Then \ref{maine} implies \ref{maine2}.
\end{observation}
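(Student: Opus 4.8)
The inclusion $\supseteq$ is immediate: $D^\gamma_{<i}$ is contained both in $D^\gamma_{<i}\cup\{\gamma\}$ and in $\gamma$, so $G_{D^\gamma_{<i}}$ is a common subgroup of $G_{D^\gamma_{<i}\cup\{\gamma\}}$ and of $G_\gamma$. For the reverse inclusion the plan is, given $g\in G_{D^\gamma_{<i}\cup\{\gamma\}}\cap G_\gamma$, to pin $g$ down inside $G_{D^\gamma_{\le j}}$ for the smallest possible index $j<\theta$, and then to show that this smallest index is forced to lie below $i$.

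I would first record the two soft facts doing all the work. Since $e$ is $\theta$-valued, $D^\gamma_{<j_0}=\bigcup_{j<j_0}D^\gamma_{\le j}$ for every $j_0\le\theta$; in particular $\gamma=\bigcup_{j<\theta}D^\gamma_{\le j}$. And for any increasing union of subsets $S=\bigcup_t S_t$ of $\kappa$ one has $G_S=\bigcup_t G_{S_t}$, because a group word uses only finitely many generators. Combining these, every element of $G_\gamma$ --- in particular our $g$ --- belongs to $G_{D^\gamma_{\le j}}$ for some $j<\theta$; let $j_0<\theta$ be the least such.

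The crux is that $j_0\le i$. Suppose not, i.e.\ $i<j_0$. Then $D^\gamma_{<i}\s D^\gamma_{<j_0}$, so $g\in G_{D^\gamma_{<j_0}\cup\{\gamma\}}$; since also $g\in G_{D^\gamma_{\le j_0}}$, applying \ref{maine} at the index $j_0$ gives $g\in G_{D^\gamma_{<j_0}}$. But $G_{D^\gamma_{<j_0}}=\bigcup_{j<j_0}G_{D^\gamma_{\le j}}$ by the soft facts, so $g\in G_{D^\gamma_{\le j}}$ for some $j<j_0$, contradicting the minimality of $j_0$. Hence $j_0\le i$. If $j_0<i$ then $D^\gamma_{\le j_0}\s D^\gamma_{<i}$ and so $g\in G_{D^\gamma_{\le j_0}}\s G_{D^\gamma_{<i}}$; and if $j_0=i$ then $g\in G_{D^\gamma_{<i}\cup\{\gamma\}}\cap G_{D^\gamma_{\le i}}=G_{D^\gamma_{<i}}$ by \ref{maine} itself. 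Either way $g\in G_{D^\gamma_{<i}}$, as desired.

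The one point that needs care is the choice of strategy around $j_0$: a direct induction on $j$ proving ``$G_{D^\gamma_{<i}\cup\{\gamma\}}\cap G_{D^\gamma_{\le j}}=G_{D^\gamma_{<i}}$ for all $j\ge i$'' stalls at limit stages, precisely because there $D^\gamma_{\le j}$ is strictly larger than $\bigcup_{j'<j}D^\gamma_{\le j'}$; passing to the least witnessing index and invoking \ref{maine} at that index sidesteps the issue. It is worth noting that subadditivity of $e$ is not used here; the Observation is a purely formal consequence of \ref{maine} together with the filtration structure of $\langle G_\gamma\mid\gamma\le\kappa\rangle$.
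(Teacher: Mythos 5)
Your proof is correct and amounts to the paper's own argument run in contrapositive form: the paper proves by induction on $j$ that $G_{D^\gamma_{<i}\cup\{\gamma\}}\cap G_{D^\gamma_{<j}}=G_{D^\gamma_{<i}}$ (base case $j=i+1$ directly from \ref{maine}, limit stages by continuity, successor stages by applying \ref{maine} at the index $j$ together with $G_{D^\gamma_{<i}\cup\{\gamma\}}\subseteq G_{D^\gamma_{<j}\cup\{\gamma\}}$), whereas you package the same two ingredients into a least-index argument, taking the minimal $j_0$ with $g\in G_{D^\gamma_{\le j_0}}$. One small remark: the paper's induction does not stall at limits because it is phrased with $D^\gamma_{<j}$, which at limit $j$ equals $\bigcup_{j'<j}D^\gamma_{\le j'}$, so your cautionary note applies only to the $\le j$ formulation of the induction hypothesis, not to the one the paper actually uses.
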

\begin{proof} Assuming \ref{maine}, we prove by induction on $j<\theta$ that 
		$$G_{D^\gamma_{< i} \cup \{\gamma\}} \cap G_{D^\gamma_{< j}} = G_{D^\gamma_{< i}}.$$
This is clear for $j = i+1$, and by continuity for limit $j$. Now 
		$G_{D^\gamma_{< j} \cup \{\gamma\}} \cap G_{D^\gamma_{\le j}} = G_{D^\gamma_{< j}}$  by our assumptions, so since $G_{D^\gamma_{< i} \cup \{\gamma\}}  \s G_{D^\gamma_{< j} \cup \{\gamma\}}$ trivially holds,
		$$G_{D^\gamma_{< i} \cup \{\gamma\}} \cap G_{D^\gamma_{\le j}} \s G_{D^\gamma_{< j}} \cap G_{D^\gamma_{< i} \cup \{\gamma\}} = G_{D^\gamma_{< i}},$$
as sought.\end{proof}	

To state another consequence of our promises, we agree to say that a set $F\s\kappa$ is \emph{closed} if for every $\alpha \in F$,
there exists some $i\le\theta$ such that $F \cap \alpha = D^\alpha_{< i}$.

		\begin{lemma}\label{clcor} Suppose that $F,F'$ are closed subsets of $\kappa$. 
			\begin{enumerate}
				\item \label{egy} For every $g \in G_F$,
				$D^{\tau_g}_{\leq i_g}\cup\{\tau_g\}\subseteq F$; 
				\item \label{ke} $G_{F} \cap G_{F'} = G_{F \cap F'}$.
			\end{enumerate}
		\end{lemma}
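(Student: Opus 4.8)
The plan is to prove both clauses by simultaneous induction on the structure of the elements, ultimately relying on the intersection promises \ref{maine} and \ref{maine2} together with the characterization of closed sets. I would first observe that the notion of ``closed'' is tailored so that for a closed $F$ and $\alpha\in F$, the witnessing $i\le\theta$ makes $F\cap\alpha$ equal to some $D^\alpha_{<i}$, and in particular $F\cap(\alpha+1)$ is of the form $D^\alpha_{<i}\cup\{\alpha\}$ or $D^\alpha_{\le j}$-like sets built up from these. Note also that since the underlying set of each $G_\gamma$ is an initial segment of $\kappa$ (promise \ref{underlying}), one can speak of $\tau_g$ and $i_g$ for any group element $g$, and these do not depend on $F$.

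For clause \eqref{egy}, I would argue by induction on $\gamma=\tau_g$. Given $g\in G_F$, write $g$ as a word in the generators $\{x_\alpha\mid \alpha\in F\}$; let $\gamma$ be the largest index appearing, so $\gamma=\tau_g$ and $\gamma\in F$. Since $F$ is closed, there is $i\le\theta$ with $F\cap\gamma=D^\gamma_{<i}$, hence $g\in G_{D^\gamma_{<i}\cup\{\gamma\}}$. Because $G_{D^\gamma_{<i}\cup\{\gamma\}}=\bigcup_{j<i}G_{D^\gamma_{<j}\cup\{\gamma\}}$ by continuity (the sets $D^\gamma_{<j}$ increase to $D^\gamma_{<i}$ when $i$ is a limit; when $i$ is a successor, $i-1$ already works), together with the definition $i_g=\min\{j\mid g\in G_{D^\gamma_{\le j}\cup\{\gamma\}}\}$ and promise \ref{maine2}/\ref{maine} relating $G_{D^\gamma_{\le i_g}}$ and $G_{D^\gamma_{<i_g}\cup\{\gamma\}}$, I would extract that $g\in G_{D^\gamma_{\le i_g}\cup\{\gamma\}}$ with $i_g<i$, so $D^\gamma_{\le i_g}\s D^\gamma_{<i}=F\cap\gamma$; adding $\gamma\in F$ gives $D^\gamma_{\le i_g}\cup\{\gamma\}\s F$, as desired. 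The subtlety here is making sure the minimality in the definitions of $\tau_g$ and $i_g$ matches the way $g$ sits inside the filtration — this is where promise \ref{maine} does the real work.

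For clause \eqref{ke}, the inclusion $G_{F\cap F'}\s G_F\cap G_{F'}$ is trivial. For the reverse, take $g\in G_F\cap G_{F'}$ and apply clause \eqref{egy} twice: $D^{\tau_g}_{\le i_g}\cup\{\tau_g\}\s F$ and $D^{\tau_g}_{\le i_g}\cup\{\tau_g\}\s F'$, hence $D^{\tau_g}_{\le i_g}\cup\{\tau_g\}\s F\cap F'$, and since $g\in G_{D^{\tau_g}_{\le i_g}\cup\{\tau_g\}}$, we get $g\in G_{F\cap F'}$ immediately. (One should also check that $F\cap F'$ is itself closed, or rather that $G_{F\cap F'}$ makes sense — which it does for any subset of $\kappa$ by promise $(p)_2$ — so closedness of $F\cap F'$ is not even needed for this direction; it would only be needed if one wanted to iterate.)

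The main obstacle I anticipate is clause \eqref{egy}: correctly handling the case distinction on whether the witnessing $i$ for $\gamma$ in $F$ is a limit or a successor, and pinning down that $g\in G_{D^\gamma_{<i}\cup\{\gamma\}}$ forces $i_g<i$ rather than merely $i_g\le i$. This hinges on the continuity of the filtration $\langle G_{D^\gamma_{<j}\cup\{\gamma\}}\mid j<\theta\rangle$ at limits and on promise \ref{maine2}, which says $G_{D^\gamma_{<i_g}\cup\{\gamma\}}\cap G_\gamma=G_{D^\gamma_{<i_g}}$ — the genuinely load-bearing intersection fact. Once that is in hand, clause \eqref{ke} is a one-line corollary of clause \eqref{egy}.
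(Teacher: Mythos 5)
Your derivation of clause \eqref{ke} from clause \eqref{egy} is exactly the paper's, and your final finiteness/continuity step (from $g\in G_{D^\gamma_{<i}\cup\{\gamma\}}$ with $F\cap\gamma=D^\gamma_{<i}$ to $D^\gamma_{\le i_g}\subseteq F\cap\gamma$) is fine. But there is a genuine gap at the very first step of clause \eqref{egy}: you write a word for $g$ over $\{x_\alpha\mid\alpha\in F\}$, let $\gamma$ be its largest index, and assert ``so $\gamma=\tau_g$ and $\gamma\in F$''. That identification is unjustified and false in general: $G$ is very far from free on the $x_\alpha$'s, and the relations imposed at stage $\gamma$ (e.g.\ $h_\sigma=\varrho(b_\sigma a_\sigma,b'_\sigma a_\sigma)$, where $b_\sigma$ involves a transversal element $l\notin G_\gamma$, hence involves $x_\gamma$) make elements of $G_{D^\gamma_{<i}}\leq G_\gamma$ equal to words whose largest generator index is $\gamma$. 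So a representation of $g$ over $F$ may have largest index strictly above $\tau_g$, and indeed $\tau_g\in F$ is part of what the lemma asserts, not something readable off a chosen word. Note also that you locate the ``real work'' in the wrong place: the step $i_g<i$ needs only finiteness of words and monotonicity of the $D^\gamma$'s, not \ref{maine2}; the load-bearing use of \ref{maine2} is precisely in the step you skipped.

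The paper fills this gap with an auxiliary claim (its Claim inside the proof): if $g\in G_F$ and $g\in G_\gamma$, then $g\in G_{F\cap\gamma}$. This is proved by an index-reduction argument: if $g$ is a word over $F$ with largest index $\alpha_*$ and $g\in G_{\alpha_*}$, then closedness gives $F\cap\alpha_*=D^{\alpha_*}_{<i}$ for some $i\le\theta$, so $g\in G_{D^{\alpha_*}_{<i}\cup\{\alpha_*\}}\cap G_{\alpha_*}=G_{D^{\alpha_*}_{<i}}=G_{F\cap\alpha_*}$ by \ref{maine2}, i.e.\ $g$ admits a representation over $F$ with strictly smaller largest index; iterating yields the claim. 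Applying it with $\gamma=\tau_g+1$ gives $g\in G_{F\cap(\tau_g+1)}$; if $\tau_g\notin F$ this would put $g\in G_{F\cap\tau_g}\leq G_{\tau_g}$, contradicting the minimality of $\tau_g$, so $\tau_g\in F$ and $g\in G_{D^{\tau_g}_{<i}\cup\{\tau_g\}}$ where $F\cap\tau_g=D^{\tau_g}_{<i}$ --- at which point your concluding argument (and hence clause \eqref{ke}) goes through. In short: your outline is repairable, but the minimality/reduction argument establishing $\tau_g\in F$ and $g\in G_{F\cap(\tau_g+1)}$ is the heart of the lemma and is missing from your proposal.
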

		\begin{proof}
			
			\eqref{egy} 
						This follows from the following claim, using  $\gamma=\tau_g+1$.
			\begin{claim}\label{scl}
			 If $g \in G_{F}$, then for every $\gamma <\kappa$, if $g \in G_\gamma$, then $g \in G_{F \cap \gamma}$.
			\end{claim}
			\begin{proof}
		Observe that it is enough to argue that
		whenever $\alpha_0,\alpha_1, \dots, \alpha_{n-1} \in F$ and $\alpha_*= \max \{\alpha_j\mid j <n\}$, we have
		$$(x_{\alpha_0}x_{\alpha_1}  \cdots x_{\alpha_{n-1}} \in G_{\alpha_*}) \ \to \ (x_{\alpha_0}x_{\alpha_1}  \cdots x_{\alpha_{n-1}} \in G_{F \cap \alpha_*}).$$
Fix $i < \theta$ such that $F \cap \alpha_* = D^{\alpha_*}_{< i}$.
Then $g \in G_{D^{\alpha_*}_{< i} \cup \{\alpha_*\}} \cap G_{\alpha_*}$, and hence, by \ref{maine2},
			$g \in G_{D^{\alpha_*}_{< i}} = G_{F \cap \alpha_*}$.
			\end{proof}
			
\eqref{ke} This follows from Clause~\eqref{egy}, as follows. If $g \in G_{F} \cap G_{F'}$, then
			$\{\tau_g\} \cup D^{\tau_g}_{\leq i_g} \subseteq F \cap F'$, so 
			$\{\tau_g\} \cup D^{\tau_g}_{\leq i_g} \subseteq F \cap F'$, but $g \in \{\tau_g\} \cup D^{\tau_g}_{\leq i_g}$ by the definition of $i_g$.
\end{proof}

\subsection{The recursive construction} \label{reccons}
We are now ready to start the recursive process.
We start by letting $\langle x_\alpha\mid\alpha<\theta\rangle$ be a sequence of independent group elements, so that it generates a free group with $\theta$ many generators.
We assume $G_\theta$ has underlying set $\theta$.
Hereafter, we shall not worry about \ref{underlying}, since it is clear it may be secured.
Next, suppose that $\gamma<\kappa$ is such that $G_\gamma$ has already been defined and satisfies all of our promises.
		We construct $G_{\gamma+1}$ by the following procedure. We let $x_\gamma=\min(\kappa\setminus G_\gamma)$,
		and now we need to describe the group relationship between $x_\gamma$ and the elements of $G_\gamma$.
		We will define $\langle G_{D^\gamma_{< i} \cup \{ \gamma\}}\mid i < \theta \rangle$ by recursion on $i<\theta$, in such a way that:
\begin{equation}\label{workhyp} G_{D^\gamma_{< i}} \mal G_{D^\gamma_{< i} \cup \{ \gamma\}},\quad(i<\theta)
\end{equation}

	Here we go.
			As $D^\gamma_{< 0} = \emptyset$, we mean $G_{D^\gamma_{<0}} = \{\one\}$, and we let $G_{D^\gamma_{<0} \cup \{\gamma\}} = G_{\{\gamma\}}$ be the infinite group $\mathbb Z$ generated by $x_\gamma$. Note that $G_{D^\gamma_{<0}} \mal G_{D^\gamma_{<0} \cup \{\gamma\}}$ vacuously holds. Moving on, suppose that $i<\theta$ is such that $G_{D^\gamma_{<i}\cup\{\gamma\}}$ has already been defined.
For every $\beta \leq \gamma$,  
define an equivalence relation $E^\gamma_{< i,\beta}$ over $G_{D^\gamma_{< i} \cup \{\gamma\}}$
 via:
			$$g\mathrel{E^\gamma_{< i,\beta}}h\text{ iff } g \in G_{D^\gamma_{< i} \cap \beta}h^{\pm 1} G_{D^\gamma_{< i} \cap \beta}.$$

\begin{lemma}\label{rest} For every $j \leq i$,
$ E^\gamma_{< i, \beta} \rest G_{D^\gamma_{< j} \cup \{\gamma\}} = E^\gamma_{< j, \beta}$.
\end{lemma}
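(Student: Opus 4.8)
The statement to prove is Lemma~\ref{rest}: for $j \leq i$, the restriction of the equivalence relation $E^\gamma_{< i, \beta}$ to the smaller group $G_{D^\gamma_{< j} \cup \{\gamma\}}$ coincides with $E^\gamma_{< j, \beta}$. The plan is to unwind both definitions and reduce everything to a single intersection computation inside $G_{D^\gamma_{<i}\cup\{\gamma\}}$, where one can apply the promises \ref{maine2}/\ref{maine} (or rather their consequence Lemma~\ref{clcor}\eqref{ke} about intersections of subgroups generated by closed sets) together with the monotonicity of the $D^\gamma_{<i}$'s in $i$.

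\textbf{Key steps.} First I would note that, since $D^\gamma_{<j}\subseteq D^\gamma_{<i}$, we have $G_{D^\gamma_{<j}\cup\{\gamma\}}\subseteq G_{D^\gamma_{<i}\cup\{\gamma\}}$, so the restriction on the left-hand side makes sense; and trivially $G_{D^\gamma_{<j}\cap\beta}\subseteq G_{D^\gamma_{<i}\cap\beta}$, so $E^\gamma_{<j,\beta}\subseteq E^\gamma_{<i,\beta}$ already as relations on the smaller group. Hence the content is the reverse inclusion: if $g,h\in G_{D^\gamma_{<j}\cup\{\gamma\}}$ and $g\in G_{D^\gamma_{<i}\cap\beta}\,h^{\pm1}\,G_{D^\gamma_{<i}\cap\beta}$, then in fact the two ``side'' factors can be taken in the smaller group $G_{D^\gamma_{<j}\cap\beta}$. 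Write $g = u\,h^{\varepsilon}\,v$ with $u,v\in G_{D^\gamma_{<i}\cap\beta}$ and $\varepsilon\in\{1,-1\}$. Then $u = g\,v^{-1}\,h^{-\varepsilon}$ lies in $G_{D^\gamma_{<j}\cup\{\gamma\}}\cdot G_{D^\gamma_{<i}\cap\beta}$; but I want to instead isolate that $u\in G_{D^\gamma_{<i}\cap\beta}\cap \bigl(G_{D^\gamma_{<j}\cup\{\gamma\}}\cdot G_{D^\gamma_{<i}\cap\beta}\bigr)$, which is not immediately a group intersection. The cleaner route is: $u^{-1}g = h^\varepsilon v$, so $h^{-\varepsilon} u^{-1} g = v \in G_{D^\gamma_{<i}\cap\beta}$, while also $h^{-\varepsilon}u^{-1}g\in G_{D^\gamma_{<i}\cap\beta}\cdot G_{D^\gamma_{<j}\cup\{\gamma\}}$ — still a coset, not a subgroup. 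To make it a genuine subgroup intersection, I would argue: $v\in G_{D^\gamma_{<i}\cap\beta}$ and $v = h^{-\varepsilon}u^{-1}g$; consider $w := u v = u h^{-\varepsilon} u^{-1} g$. Hmm — the most robust approach is actually to observe that the sets $D^\gamma_{<i}\cap\beta$ and $D^\gamma_{<j}\cup\{\gamma\}$ are both \emph{closed} in the sense defined before Lemma~\ref{clcor}, so Lemma~\ref{clcor}\eqref{ke} gives $G_{D^\gamma_{<i}\cap\beta}\cap G_{D^\gamma_{<j}\cup\{\gamma\}} = G_{(D^\gamma_{<i}\cap\beta)\cap(D^\gamma_{<j}\cup\{\gamma\})} = G_{D^\gamma_{<j}\cap\beta}$ (using that $\gamma\notin\beta\supseteq$ the relevant sets and $D^\gamma_{<i}\cap D^\gamma_{<j}=D^\gamma_{<j}$). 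So the real task is to manufacture, from the relation $g=u h^\varepsilon v$ with $g,h\in G_{D^\gamma_{<j}\cup\{\gamma\}}$ and $u,v\in G_{D^\gamma_{<i}\cap\beta}$, witnesses $u',v'$ lying in that intersection $G_{D^\gamma_{<j}\cap\beta}$.

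\textbf{The main obstacle.} This reduction is exactly where the subtlety sits, and I expect it to be the crux. The issue is that knowing $u h^\varepsilon v\in G_{D^\gamma_{<j}\cup\{\gamma\}}$ does not by itself force $u$ or $v$ individually into a smaller group. The key extra ingredient must be the \emph{malnormality} promise \eqref{workhyp}, i.e. $G_{D^\gamma_{<j}}\mal G_{D^\gamma_{<j}\cup\{\gamma\}}$, possibly combined with the structure of the free amalgam decomposition (Lemma~\ref{unie}) coming from promise (p)$_7$, which describes $G_{D^\gamma_{\le i}}$ (hence also the groups $G_{D^\gamma_{<i}\cup\{\gamma\}}$ built along the way) as iterated free amalgams over the $G_{D^\gamma_{<i}}$'s. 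Concretely: write $g, h$ in canonical form with respect to the amalgam $G_{D^\gamma_{<i}\cup\{\gamma\}}$; the factors $u,v\in G_{D^\gamma_{<i}\cap\beta}\subseteq G_{D^\gamma_{<i}}$ lie in the ``base,'' so conjugating/multiplying $h$ by them interacts with the canonical form in the controlled way of Lemma~\ref{unie}, and the hypothesis that the product lands in $G_{D^\gamma_{<j}\cup\{\gamma\}}$ forces (via uniqueness of canonical form and malnormality) that the base-group adjustments actually land in $G_{D^\gamma_{<j}}$, hence in $G_{D^\gamma_{<i}\cap\beta}\cap G_{D^\gamma_{<j}\cup\{\gamma\}}=G_{D^\gamma_{<j}\cap\beta}$. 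So the key steps, in order, are: (1) record the trivial inclusion $E^\gamma_{<j,\beta}\subseteq E^\gamma_{<i,\beta}\rest G_{D^\gamma_{<j}\cup\{\gamma\}}$; (2) for the converse, take $g,h$ related by $E^\gamma_{<i,\beta}$ in the small group, write $g=uh^\varepsilon v$; (3) use the canonical-form analysis (Lemma~\ref{unie}) plus malnormality \eqref{workhyp} to push $u,v$ into $G_{D^\gamma_{<j}}$, modulo replacing them by $E$-equivalent representatives; (4) identify the intersection $G_{D^\gamma_{<i}\cap\beta}\cap G_{D^\gamma_{<j}\cup\{\gamma\}}$ with $G_{D^\gamma_{<j}\cap\beta}$ via Lemma~\ref{clcor}\eqref{ke}, closing the argument. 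The genuinely delicate point is step (3); everything else is bookkeeping with monotonicity of $D^\gamma_{<i}$ in $i$ and the definitions.
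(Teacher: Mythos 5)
Your reduction is the right one, and your step (4) is exactly what the paper does: both $D^\gamma_{<i}\cap\beta$ and $D^\gamma_{<j}\cup\{\gamma\}$ are closed (by subadditivity of $e$), so Lemma~\ref{clcor}(2) identifies the relevant intersection with $G_{D^\gamma_{<j}\cap\beta}$. The trivial inclusion and the contrapositive formulation (non-equivalence, i.e.\ being ``good fellows'' over $G_{D^\gamma_{<j}\cap\beta}$, must persist over the larger side groups) are also correct. But the whole content of the lemma is your step (3), which you leave as a heuristic, and the mechanism you propose for it would not work as stated. The group $G_{D^\gamma_{<i}\cup\{\gamma\}}$ is \emph{not} a free product with amalgamation over $G_{D^\gamma_{<j}}$ (or over anything): it is produced by a transfinite iteration in which each $G_{D^\gamma_{\le j}\cup\{\gamma\}}$ is the quotient $M^*=(K\ast_H L)/N$ of an amalgam by the normal closure of the relators $h_\sigma^{-1}\varrho(b_\sigma a_\sigma,b'_\sigma a_\sigma)$. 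Consequently Lemma~\ref{unie} (uniqueness of canonical form, which lives in $K\ast_H L$) says nothing about double-coset membership in the quotient, and malnormality \eqref{workhyp} does not control the new relations either; deciding whether the relators can force $g\in K'h^{\pm1}K'$ in $M^*$ is precisely the kind of question that required the small-cancellation machinery ($C'(\frac{1}{10})$ and Lemma~\ref{smacan}) in the proof of Lemma~\ref{shhe}. So as written there is a genuine gap: the crux is asserted, not proved, and the tools named for it are the wrong ones.

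The paper's proof instead goes stage by stage: by induction on $j$ with continuity at limits it suffices to prove the single successor step $E^\gamma_{<j+1,\beta}\rest G_{D^\gamma_{<j}\cup\{\gamma\}}=E^\gamma_{<j,\beta}$, and for that one recalls that $G_{D^\gamma_{\le j}\cup\{\gamma\}}$ was obtained from Lemma~\ref{shhe} with $H=G_{D^\gamma_{<j}}$, $K=G_{D^\gamma_{\le j}}$, $L=G_{D^\gamma_{<j}\cup\{\gamma\}}$, sets $K'=G_{D^\gamma_{\le j}\cap\beta}$, computes $K'\cap L=G_{D^\gamma_{<j}\cap\beta}$ via Lemma~\ref{clcor}(2), and then quotes the good-fellow--preservation clause of that lemma (cited there as \ref{HH}, i.e.\ the analogue of \ref{H'} with the roles of $K$ and $L$ interchanged) to conclude that good fellows over $K'\cap L$ remain good fellows over $K'$ in $M^*$. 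To repair your sketch you must route step (3) through those quotient-preservation clauses one successor stage at a time (with continuity at limit stages), rather than through a one-shot normal-form-plus-malnormality argument for the full gap from $j$ to $i$.
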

\begin{proof}
By a straightforward induction on $j$, we argue that
$$ E^\gamma_{< j+1, \beta} \rest G_{D^\gamma_{< j} \cup \{\gamma\}} = E^\gamma_{< j, \beta}.$$
By continuity, this suffices.
Recalling that $G_{D^\gamma_{\le j}}$ was given by Lemma~\ref{shhe} we let $K' = G_{D^\gamma_{\le j} \cap \beta}$, so 
		$$K' \cap L = G_{D^\gamma_{\le j} \cap \beta} \cap G_{D^\gamma_{<j} \cup \{\gamma\}} =   G_{D^\gamma_{<j} \cap \beta},$$
		where the second equality is by Lemma~\ref{clcor}\eqref{ke}. 
		Now for all $g,g' \in G_{D^\gamma_{< j} \cup \{\gamma\}}$
		constituting a pair of good fellows over $K' \cap L$ must remain good fellow over 
		$$K' =  G_{D^\gamma_{\le j} \cap \beta} = G_{D^\gamma_{< j+1} \cap \beta},$$ by Clause~\ref{HH} of Lemma~\ref{shhe}.
\end{proof}

In regards to \ref{tmatrix}, we shall want $T^\gamma_{< i, \beta} \subseteq G_{D^\gamma_{< i} \cup \{\gamma\}}$ to be a transversal for the equivalence classes of $E^\gamma_{< i, \beta}$.
\begin{lemma}\label{Trest} The system of transversal sets $\langle T^\gamma_{<i,\beta}\mid \beta\le\gamma\rangle$ can be chosen as promised, satisfying \ref{fc}, \ref{sc}.
\end{lemma}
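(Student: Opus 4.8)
\textit{Proof plan for Lemma~\ref{Trest}.} The plan is to fix $\gamma<\kappa$ and construct the entire row $\langle T^\gamma_{<i,\beta}\mid \beta\le\gamma\rangle$ by recursion on $i<\theta$, having first fixed a well-ordering of the underlying set of $G_{\gamma+1}$ (which, by Promise~\ref{underlying}, is literally an ordinal, so we may use $\in$); for nonempty $C$ inside such a set, $\min C$ denotes its least element. At each level $i$, $T^\gamma_{<i,\beta}$ will be a transversal for $E^\gamma_{<i,\beta}$ inside $G_{D^\gamma_{<i}\cup\{\gamma\}}$, and we will maintain Clauses~\ref{fc} and~\ref{sc}. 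Since $D^\gamma_{<0}=\emptyset$, the relation $E^\gamma_{<0,\beta}$ does not depend on $\beta$, so at level $0$ we fix one transversal $T^\gamma_{<0}$ for it on $G_{\{\gamma\}}\cong\mathbb Z$ (e.g.\ $\{x_\gamma^n\mid n<\omega\}$) and set $T^\gamma_{<0,\beta}:=T^\gamma_{<0}$ for all $\beta\le\gamma$; Clauses~\ref{fc},~\ref{sc} then hold trivially. At a limit $i<\theta$, put $T^\gamma_{<i,\beta}:=\bigcup_{j<i}T^\gamma_{<j,\beta}$: Clause~\ref{sc} is inherited and Clause~\ref{fc} is immediate, and it is a transversal because $D^\gamma_{<i}=\bigcup_{j<i}D^\gamma_{<j}$ and $D^\gamma_{<i}\cap\beta=\bigcup_{j<i}(D^\gamma_{<j}\cap\beta)$, so $G_{D^\gamma_{<i}\cup\{\gamma\}}$ and $G_{D^\gamma_{<i}\cap\beta}$ are the directed unions of the corresponding chains of subgroups; hence every $g$ in the big group lies in some $G_{D^\gamma_{<j}\cup\{\gamma\}}$ and, using $E^\gamma_{<i,\beta}\rest G_{D^\gamma_{<j}\cup\{\gamma\}}=E^\gamma_{<j,\beta}$ from Lemma~\ref{rest}, is $E^\gamma_{<i,\beta}$-equivalent to its $T^\gamma_{<j,\beta}$-representative, while any two $E^\gamma_{<i,\beta}$-equivalent members of the union — together with the two witnessing elements of $G_{D^\gamma_{<i}\cap\beta}$ — all sit below a common level, where uniqueness is already known.

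The crux is the successor step $i\to i+1$. Write $A:=G_{D^\gamma_{<i}\cup\{\gamma\}}$ and $B:=G_{D^\gamma_{\le i}\cup\{\gamma\}}=G_{D^\gamma_{<i+1}\cup\{\gamma\}}$, so $A\le B$, and note that $E^\gamma_{<i+1,\beta}\rest A=E^\gamma_{<i,\beta}$ for every $\beta\le\gamma$ by Lemma~\ref{rest} (with $j=i$). For each $\beta\le\gamma$ I would set
\[ T^\gamma_{<i+1,\beta}\ :=\ T^\gamma_{<i,\beta}\ \cup\ \{\,\min C\ \mid\ C\text{ an }E^\gamma_{<i+1,\beta}\text{-class on }B\text{ with }C\cap A=\emptyset\,\}. \]
Transversality is checked by splitting the $E^\gamma_{<i+1,\beta}$-classes into those meeting $A$ — for such $C$, the set $C\cap A$ is a full $E^\gamma_{<i,\beta}$-class by the restriction fact, hence $C$ is hit exactly once by $T^\gamma_{<i,\beta}$ and missed by every added $\min$ — and those disjoint from $A$, which are disjoint from $T^\gamma_{<i,\beta}\subseteq A$ and are hit exactly once, at their $\min$. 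Clause~\ref{fc} is built into the definition.

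For Clause~\ref{sc}, fix $\alpha<\beta\le\gamma$. Since $D^\gamma_{<i+1}\cap\alpha\subseteq D^\gamma_{<i+1}\cap\beta$, the relation $E^\gamma_{<i+1,\alpha}$ refines $E^\gamma_{<i+1,\beta}$ on $B$; so if $C$ is an $E^\gamma_{<i+1,\beta}$-class with $C\cap A=\emptyset$ and $t:=\min C$, the $E^\gamma_{<i+1,\alpha}$-class $C'$ of $t$ satisfies $C'\subseteq C$, whence $C'\cap A=\emptyset$ and $t=\min C'$, so $t\in T^\gamma_{<i+1,\alpha}$. Together with the inductive $T^\gamma_{<i,\beta}\subseteq T^\gamma_{<i,\alpha}$ this gives $T^\gamma_{<i+1,\beta}\subseteq T^\gamma_{<i+1,\alpha}$, which closes the recursion.

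The only genuine subtlety — and the thing to watch — is reconciling the two coherence demands, nesting along $i$ and reverse-nesting along $\beta$, with transversality at every node simultaneously; the device that makes this painless is to always \emph{extend} the previous transversal by $\min$-representatives of exactly the newly born classes, exploiting the elementary fact that a $\min$-representative of a class stays the $\min$-representative of the finer sub-class containing it. The one nontrivial external input is Lemma~\ref{rest} (which rests on Clause~\ref{HH} of Lemma~\ref{shhe}): it is precisely what guarantees that the $E^\gamma_{<i+1,\beta}$-classes already represented at level $i$ are exactly those meeting $G_{D^\gamma_{<i}\cup\{\gamma\}}$, so that no class is represented twice.
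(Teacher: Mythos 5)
Your construction is correct for the lemma as literally stated, but it takes a genuinely different route from the paper's. The paper does not recurse on $i$ at all: it fixes, once and for all, a well-ordering $\prec_{\gamma,i}$ of $G_{D^\gamma_{<i}\cup\{\gamma\}}\setminus G_\gamma$ that coheres along $i$ and puts elements with smaller $i_g$ earlier, and then defines $T^\gamma_{<i,\beta}$ uniformly as the set of $\prec_{\gamma,i}$-least members of the $E^\gamma_{<i,\beta}$-classes. With that choice, \ref{fc} follows from Lemma~\ref{rest} together with the two properties of the ordering (new elements at level $j>i$ have $i_h\geq i$, hence come $\prec$-later, so old minima stay minimal), and \ref{sc} is immediate from $E^\gamma_{<i,\alpha}\subseteq E^\gamma_{<i,\beta}$ -- exactly the ``a minimum of a class stays the minimum of any finer subclass containing it'' observation you isolate. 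Your recursion (extend by $\min$-representatives of the classes that are disjoint from the previous group, take unions at limits) achieves the same thing by construction rather than by a coherent ordering; both arguments are sound, and your limit- and successor-step verifications (using Lemma~\ref{rest} to see that the classes already met at level $i$ are exactly those meeting $G_{D^\gamma_{<i}\cup\{\gamma\}}$) are the right ones.

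One substantive divergence is worth flagging. Because the paper's minima are taken in a well-order of $G_{D^\gamma_{<i}\cup\{\gamma\}}\setminus G_\gamma$, its transversals contain only elements \emph{outside} $G_\gamma$: classes contained in $G_{D^\gamma_{<i}}$ (and, at successor stages, classes consisting of elements of $G_{D^\gamma_{\le i}}\setminus G_{D^\gamma_{<i}}$, which also lie in $G_\gamma$) receive no representative. Your transversals represent \emph{all} classes, so they contain $\one$ and many other elements of $G_\gamma$. This is harmless for \ref{fc} and \ref{sc}, and indeed matches the promise read literally, but it matters downstream: in Definitions~\ref{relde} and \ref{constJ+}, the elements $l$ of $\bigcup_{\beta<\gamma}T^\gamma_{<i,\beta}$ are used to form $b_\sigma=y_{\sigma,0}\,l^{\varp_\sigma}\,y_{\sigma,1}d_\sigma$, which must land in $L\setminus H$ for $S$ to satisfy the hypotheses of Lemma~\ref{shhe}; an $l\in G_\gamma$ (hence $l\in H$) can pass $\boxdot_1$--$\boxdot_3$ and would produce $b_\sigma\in H$, breaking that application. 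The repair is trivial -- only add representatives for classes disjoint from $G_\gamma$ (equivalently, classes of elements $g$ with $\tau_g=\gamma$), which are the only representatives the verification ever uses -- but as written your system is not the one the later construction needs. The point about fixing a well-ordering of $G_{\gamma+1}$ ``first'' (before $G_{\gamma+1}$ exists) is only a presentational slip, since at stage $i+1$ a well-ordering of $G_{D^\gamma_{<i+1}\cup\{\gamma\}}$ suffices and, by \ref{underlying}, the ordinal order is available as elements are named.
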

\begin{proof} Fix a well ordering $\prec_{\gamma,i}$ on $G_{D^\gamma_{< i} \cup \{\gamma\}} \setminus G_\gamma$  so that 
$$j<i \ \to \ \prec_{\gamma,i} \rest (G_{D^\gamma_{< j} \cup \{\gamma\}} \setminus G_\gamma) \ = \  \prec_{\gamma,j},$$ and 
			$$ (i_g < i_h)\implies(g \prec_\gamma h),$$ 
			and then let 
			$$T^\gamma_{< i, \beta}= \{g \in G_{D^\gamma_{<i} \cup \{\gamma\}} \mid g = \min_{\prec_{\gamma,i}}([g]_{E^\gamma_{< i, \beta}}) \}.$$
			Observe that Lemma~\ref{rest} implies \ref{fc} and \ref{sc} follow from $E^\gamma_{< i, \alpha} \subseteq E^\gamma_{< i, \beta}$.
\end{proof}		
				
		Note that
		\begin{enumerate}[label = $(\star)$, ref = $(\star)$]
			\item \label{fontt2} If $g \in G_{\gamma+1} \setminus G_\gamma$, $\beta \leq \gamma$, and $i_g \leq i$, then 
			for the unique $t \in  T^\gamma_{< i+1,\beta}$  there exist $y_0,y_1 \in G_{D^\gamma_{\le i} \cap \beta}$, $\varp \in \{-1,1\}$, such that $g = y_0t^\varp y_1$.
			Now if $\beta$ is limit, then $y_0,y_1 \in G_{D^\gamma_{\le i} \cap \alpha}$ for some $\alpha < \beta$, so $t$ and $g$ are $E^\gamma_{< i+1,\alpha}$-equivalent too (and $t \in  T^\gamma_{< i+1,\alpha}$, since $T^\gamma_{< i +1,\alpha } \supseteq T^\gamma_{< i+1,\beta}$  by \ref{sc}).
		\end{enumerate}

Pick a function $ \vec q : \kappa\to {}^3 \kappa \times \{1,-1\}$ that is surjective, i.e., for all $\zeta_0, \zeta_1, \zeta_2 \in \kappa$ and $ \varp \in \{1,-1\}$ there exists  $\xi<\kappa$ such that 
			$$(q_0(\xi), q_{1}(\xi), q_{2}(\xi), q_{3}(\xi)) = (\zeta_0,\zeta_1,\zeta_2, \varp),$$
			which we fix throughout the recursive construction over $\gamma$ and $i$.

\begin{definition}\label{constJ+} 		Let
$J^+ = \{ (l,k) \in (\bigcup_{\beta < \gamma} T^\gamma_{< i,\beta}) \times (K \setminus H)\mid \tau_k \in D^\gamma_{\leq i} \setminus D^\gamma_{<i} \ \wedge \ l \in T^\gamma_{< i,\tau_k} \}.$
\end{definition}
We shall define the tuples  $(b_j,b_{j}', a_j, h_j)$ for each $j \in J^+$, and then we will set $S = \{ (b_j,b'_{j}, a_j, h_j)\mid j \in J \}$ for an appropriate subset $J \subseteq J^+$.

\begin{definition}\label{relde} Let $J^*$ be the collection of all $(l,k) \in (\bigcup_{\beta < \gamma} T^\gamma_{< i,\beta}) \times (K \setminus H)$ 
such that $q_{\ell}(c_1(\tau_k,\gamma)) \in G_\gamma \subseteq \kappa$ for each $\ell <3$.
	
	 Now for $\sigma=(l,k) \in J^*$ define the extended tuple 
			$$ (b_{\sigma},b'_{\sigma}, a_{\sigma}, h_{\sigma}, \alpha_{\sigma}, d_{\sigma}, y_{\sigma,0},y_{\sigma,1}, \varp_{\sigma}, K_{\sigma})$$
			as follows:
			\begin{itemize}
				\item $a_{\sigma} = k$,
				\item $\alpha_{\sigma} = \tau_k$,
				\item $d_{\sigma} = q_{2}(c_1(\alpha_{\sigma},\gamma))$,
				\item $y_{\sigma,\ell} = q_{\ell}(c_1(\alpha_{\sigma},\gamma))$ for $\ell = 0,1$,
				\item $\varp_{\sigma} = q_{3}(c_1(\alpha_{\sigma},\gamma))$,
				\item $b_{\sigma} = y_{\sigma,0} \cdot l^{\varp_\sigma} \cdot y_{\sigma,1} d_\sigma$,
				\item $b_\sigma' = b_\sigma \cdot b_\sigma$,
				\item $h_\sigma = c_0(\alpha_\sigma, \gamma)$,
				\item $K'_\sigma = G_{D^\gamma_{\le i}}  \cap G_{\alpha_\sigma} (=G_{D^\gamma_{\le i} \cap \alpha_\sigma})$.
			\end{itemize}
		\item \label{constJ}
			Then we  let
		$$ \begin{array}{rclll} \sigma = (l,k) \in J & \iff &  \boxdot_1  & \tau_{d_\sigma}, \tau_{y_{\sigma,0}}, \tau_{y_{\sigma,1}} < \alpha_\sigma & (\text{i.e. }d_\sigma, y_{\sigma,0}, y_{\sigma,1} \in G_{\alpha_\sigma}), \\
			& & \boxdot_2 &  \max\{i_l,i^\gamma_{y_{\sigma,0}}, i^\gamma_{y_{\sigma,1}}\} < i^\gamma_{d_\sigma} &(\text{i.e.  for some } j  < i \\
			& & & & y_{\sigma,0},y_{\sigma,1},t \in G_{D^\gamma_{<j}}, \  d_\sigma \notin G_{D^\gamma_{<j}}), \\
			& & \boxdot_3 & h_\sigma \in G_{D^\gamma_{< i}}.
			\end{array}
			$$
	\end{definition}

		\begin{lemma} Let $\sigma = (l,k) \in J$. \label{ellem}
				\begin{enumerate}[label = $(\alph*)$, ref =  $(\alph*)$]
					\item \label{cl1} $b_\sigma$ and $b_\sigma'$ are good fellows over $H = G_{D^\gamma_{< i}}$, 
					\item \label{cl2} 	whenever $\sigma' = (l,k') \in J$ with $\sigma\neq\sigma'$, at least one of the following holds
					\begin{itemize}
						\item $a_\sigma = k$, and $a_{\sigma'} = k'$ are good fellows over $H$ (in $G_{D^\gamma_{\le i}}$),
						\item or $b_\sigma$ and $b_{\sigma'}$ are good fellows over $H = G_{D^\gamma_{<i}}$,
						\item or $b_\sigma = b_{\sigma'}$, $b_\sigma' = b_{\sigma'}'$, $a_\sigma \neq a_{\sigma'}$, 
						\item or $\alpha_\sigma = \alpha_{\sigma'}$, (so $K'_{\sigma} = K'_{\sigma'}$), $b_\sigma$ $b_{\sigma'}$ are good fellows over $K'_\sigma \cap H$, $b_{\sigma}$, $b'_{\sigma'}$ are good fellows over $H$, and
						\begin{equation} \label{K'rel}  K \models \ (K_\sigma' \setminus H) \cdot (H \setminus K_\sigma') \cdot (K_\sigma' \setminus H) \subseteq K \setminus H. \end{equation}
					\end{itemize}
				\end{enumerate}
		\end{lemma}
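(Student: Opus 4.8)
The plan is to read off both clauses from the amalgam structure of the tower $\langle G_{D^\gamma_{<j}\cup\{\gamma\}}\mid j\le i\rangle$ --- each successor step of which was manufactured by Lemma~\ref{shhe} --- together with the malnormality promise \ref{indhyp2} (equivalently \eqref{workhyp}) and the conclusions (B)--(F) of that lemma. I begin with two structural observations forced by $\sigma=(l,k)\in J$. First, since $S$ must be a legitimate input to Lemma~\ref{shhe} we have $b_\sigma\in L\setminus H$ and $a_\sigma\in K\setminus H$; combined with $\boxdot_1$ (which puts $y_{\sigma,0},y_{\sigma,1},d_\sigma$ in $G_{\alpha_\sigma}\subseteq G_\gamma$) and $\boxdot_2$, this forces $j:=i^\gamma_{d_\sigma}<i$ --- so $d_\sigma\in G_{D^\gamma_{\le j}}\subseteq H$ but $d_\sigma\notin G_{D^\gamma_{<j}}$ --- and it forces the transversal representative $l$ to involve the generator $x_\gamma$ (otherwise $b_\sigma$ would lie in $G_{D^\gamma_{\le j}}\subseteq H$). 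Second, $\boxdot_2$ also places $y_{\sigma,0},y_{\sigma,1},l$ in $G_{D^\gamma_{<j}\cup\{\gamma\}}$, so the element $w:=y_{\sigma,1}^{-1}l^{-\varp_\sigma}y_{\sigma,0}^{-1}$ lies in $G_{D^\gamma_{<j}\cup\{\gamma\}}\setminus G_{D^\gamma_{<j}}$.

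For clause~\ref{cl1}: write $b_\sigma^{-1}=d_\sigma^{-1}w$ and $b_\sigma^{-2}=d_\sigma^{-1}wd_\sigma^{-1}w$. The pair $w,w$ is trivially not a pair of good fellows over $G_{D^\gamma_{<j}}$, and $d_\sigma^{-1}\in G_{D^\gamma_{\le j}}\setminus G_{D^\gamma_{<j}}$, so Clause~\ref{gfa} of Lemma~\ref{shhe}, applied at level $j$ (i.e.\ with $H=G_{D^\gamma_{<j}}$, $K=G_{D^\gamma_{\le j}}$, $L=G_{D^\gamma_{<j}\cup\{\gamma\}}$, $M^*=G_{D^\gamma_{\le j}\cup\{\gamma\}}$, and the element $d_\sigma^{-1}$) shows that $b_\sigma^{-1}$ and $b_\sigma^{-2}$ are good fellows over $G_{D^\gamma_{\le j}}$. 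Then one propagates upward: for every $j'$ with $j\le j'<i$ both $b_\sigma^{-1},b_\sigma^{-2}$ lie in $G_{D^\gamma_{<j'+1}\cup\{\gamma\}}$ (because $d_\sigma\in G_{D^\gamma_{\le j}}\subseteq G_{D^\gamma_{\le j'}}$), so Clause~\ref{HH} of Lemma~\ref{shhe}, applied at level $j'$, upgrades good-fellowship over $G_{D^\gamma_{\le j'}}$ into good-fellowship over $G_{D^\gamma_{\le j'+1}}$; iterating and taking the union over $j'<i$ yields that $b_\sigma^{-1}$ and $b_\sigma^{-2}$ are good fellows over $H=G_{D^\gamma_{<i}}$. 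Since being good fellows over a fixed subgroup is invariant under simultaneously inverting both entries, $b_\sigma$ and $b'_\sigma=b_\sigma\cdot b_\sigma$ are good fellows over $H$ (the membership $b_\sigma,b'_\sigma\notin H$ comes along for free, being part of good-fellowship).

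For clause~\ref{cl2}: fix $\sigma'=(l,k')\in J$ with $\sigma\ne\sigma'$; the first coordinates agree, so $k\ne k'$ and hence $a_\sigma\ne a_{\sigma'}$. If $\tau_k=\tau_{k'}$ then $\alpha_\sigma=\alpha_{\sigma'}$, hence $c_1(\alpha_\sigma,\gamma)=c_1(\alpha_{\sigma'},\gamma)$ and $c_0(\alpha_\sigma,\gamma)=c_0(\alpha_{\sigma'},\gamma)$, so $y_{\sigma,\ell}=y_{\sigma',\ell}$, $\varp_\sigma=\varp_{\sigma'}$, $d_\sigma=d_{\sigma'}$, and since the shared representative $l$ enters $b_\sigma$ and $b_{\sigma'}$ identically, $b_\sigma=b_{\sigma'}$ and $b'_\sigma=b'_{\sigma'}$; together with $a_\sigma\ne a_{\sigma'}$ this is the third alternative. (The same third alternative holds whenever $b_\sigma=b_{\sigma'}$, no matter how $\tau_k$ and $\tau_{k'}$ compare.) So the remaining case is $\tau_k\ne\tau_{k'}$ and $b_\sigma\ne b_{\sigma'}$; say $\tau_k<\tau_{k'}$. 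Here $\tau_{k'}\in D^\gamma_{\le i}\setminus D^\gamma_{<i}$, so by Lemma~\ref{clcor}\eqref{egy} no element of $H=G_{D^\gamma_{<i}}$ involves the generator $x_{\tau_{k'}}$, whereas $a_{\sigma'}=k'$ does (as $\tau_{a_{\sigma'}}=\tau_{k'}$), and $a_\sigma,y_{\sigma,0},y_{\sigma,1},d_\sigma$ all live strictly below stage $\tau_k<\tau_{k'}$; the claim to establish is that in this situation $a_\sigma$ and $a_{\sigma'}$ are good fellows over $H$ (the first alternative), or, failing that, that $b_\sigma$ and $b_{\sigma'}$ are (the second alternative).

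The routine-but-fiddly part --- tracking which factors involve $x_\gamma$ and at which $e$-level each of $y_{\sigma,0},y_{\sigma,1},l,d_\sigma$ sits, and checking the membership requirements needed to invoke Clauses~\ref{gfa} and~\ref{HH} at each level --- is mechanical given $\boxdot_1,\boxdot_2,\boxdot_3$, Lemma~\ref{clcor} and Notation~\ref{def41}. The genuine obstacle is the last case of clause~\ref{cl2}: one must show that the generator $x_{\tau_{k'}}$ introduced at the larger stage cannot be ``absorbed'' into a double coset of the \emph{large} subgroup $H$ over which good-fellowship is demanded. Since the generating set of $H$ is controlled only indirectly (through Lemma~\ref{clcor}\eqref{egy}), this cannot be done in a single amalgamation step; instead one runs a canonical-form analysis through all the intervening amalgams $G_{D^{\tau_{k'}}_{<i''}\cup\{\tau_{k'}\}}$ by which $G_{D^\gamma_{\le i}}$ was assembled around stage $\tau_{k'}$, invoking the malnormality promise \ref{indhyp2} and Clause~(B) of Lemma~\ref{shhe} (on how malnormal subgroups lift through the amalgam) at every level --- and similarly for the $b$-alternative, now using that the common transversal representative $l$ pins the interior of both $b_\sigma$ and $b_{\sigma'}$, so that the mismatch in the $(\varp,d,y)$-data produced by $c_1$ and $c_0$ at the distinct arguments $\tau_k\ne\tau_{k'}$ obstructs double-coset equality modulo $H$.
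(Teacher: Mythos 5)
Your clause~\ref{cl1} argument is essentially the paper's: produce the good-fellowship of $b_\sigma$ and $b'_\sigma$ over $G_{D^\gamma_{\le i^\gamma_{d_\sigma}}}$ by applying Clause~\ref{gfa} of Lemma~\ref{shhe} at level $j=i^\gamma_{d_\sigma}$ (your inverse trick just fixes the left/right orientation), and then push it up to level $i$; the paper packages that upward propagation as Lemma~\ref{rest}, which is exactly your iterated use of Clause~\ref{HH}. The gap is in clause~\ref{cl2}. After disposing of the case $\tau_k=\tau_{k'}$, you reduce to showing that for $\tau_k<\tau_{k'}$ the elements $k,k'$ are good fellows over $H=G_{D^\gamma_{<i}}$, and for this you offer only a sketch: a ``canonical-form analysis through the amalgams around stage $\tau_{k'}$, invoking the malnormality promise \ref{indhyp2} and Clause (B) of Lemma~\ref{shhe} at every level.'' This is where the real work lies, and the sketch points at the wrong tools. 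Malnormality and Clause (B) control conjugation $g^{-1}hg$, not membership in double cosets $H(k')^{\pm1}H$, which is what good-fellowship is; and an analysis localized at stage $\tau_{k'}$ cannot suffice, since $H$ contains generators $x_\delta$ for every $\delta\in D^\gamma_{<i}$, including $\delta$ far above $\tau_{k'}$, so the separation has to be carried through all those later stages. The paper's Claim~\ref{goodfella} does this by induction on $\delta\in[\tau_{k'},\gamma]$, showing $k\notin G_{D^\gamma_{<i}\cap\delta}(k')^{\pm1}G_{D^\gamma_{<i}\cap\delta}$: at a successor step one uses closedness of $D^\gamma_{<i}$, $D^\gamma_{\le i}$ and Lemma~\ref{clcor} to write $D^\gamma_{<i}\cap\delta=D^\delta_{<\eta}$ and $D^\gamma_{\le i}\cap\delta=D^\delta_{<\zeta}$, locates $k,k'$ in the $\delta$-filtration, and in the crucial subcase (both at the same level) applies Clause~\ref{H'} of Lemma~\ref{shhe} with $H'=G_{D^\delta_{<\eta}}$, $L'=G_{D^\delta_{<\eta}\cup\{\delta\}}$. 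None of this is recoverable from your sketch, so the main case of clause~\ref{cl2} is unproven (and your fallback ``or else the $b$'s are good fellows,'' argued from the ``mismatch of the $(\varp,d,y)$-data,'' is likewise only asserted; the paper avoids it by proving the $a$-alternative outright).

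A second, related issue: you read $\sigma'=(l,k')$ literally, with the same first coordinate $l$. In the paper's proof, and in the way the lemma is then used to verify hypothesis (2) of Lemma~\ref{shhe} for \emph{all} pairs of distinct elements of $J$, one must allow $\sigma'=(l',k')$ with $l'\ne l$; the displayed $(l,k')$ is evidently a typo. Under the intended reading your proposal omits the whole case $\alpha_\sigma=\alpha_{\sigma'}$, $l\ne l'$, which is precisely where the fourth alternative is needed: there the transversal property of $T^\gamma_{<i,\alpha_\sigma}$ gives that $b_\sigma,b_{\sigma'}$ are good fellows over $K'_\sigma\cap H$, Clause~\ref{gfa} gives that $b_\sigma,b'_{\sigma'}$ are good fellows over $H$, and the containment \eqref{K'rel} is established by a separate induction (Claim~\ref{claim49}, run like Claim~\ref{goodfella} but using Clause~\ref{par}). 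So even granting your reading of the statement, the central claim is missing, and under the reading the construction actually requires, an entire case is missing as well.
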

		\begin{PROOF}{Lemma~\ref{ellem}}
		Clause~\ref{cl1} holds by our inductive assumptions on the construction: if $i^* < i$ is such that
		$$\max\{i_l,i^\gamma_{y_{\sigma,0}}, i^\gamma_{y_{\sigma,1}}\} \leq i^* < i^\gamma_{d_\sigma} < i,$$
		then $l \in G_{D^\gamma_{\le i^*} \cup \{ \gamma \}}$,
		$$y_{j,0}, y_{\sigma,1} \in G_{D^\gamma_{\le i^*}} \subseteq G_{D^\gamma_{\le i^*} \cup \{ \gamma \}},$$
		and $d_\sigma \in G_{D^\gamma_{\le i^\gamma_{d_\sigma}}} \setminus  G_{D^\gamma_{<i^\gamma_{d_\sigma}}}$, and as $G_{D^\gamma_{\le i^\gamma_{d_\sigma} \cup \{\gamma\}}}$ has been obtained by applying Lemma~\ref{shhe}, we have that $b_\sigma$ and $b_\sigma'$ are good fellows over $G_{D^\gamma_{\leq i^\gamma_{d_\sigma}}}$. 
		Since $$E^\gamma_{< i, \gamma} \rest G_{D^\gamma_{< i^\gamma_{d_\sigma}+1} \cup \{\gamma\}} = E^\gamma_{< i^\gamma_{d_\sigma}+1, \beta}$$
		by Lemma \ref{rest} $\neg (b_\sigma E^\gamma_{< i^\gamma_{d_\sigma}+1, \gamma} b_\sigma')$ indeed implies $\neg (b_\sigma E^\gamma_{< i, \gamma} b_\sigma')$

	 Secondly, for Clause~\ref{cl2}, suppose that $\sigma = (l,k)$ and $\sigma' = (l',k')$ are two distinct elements of $J$.
The next claim takes care of the case $\tau_k \neq \tau_{k'}$.
		\begin{claim} \label{goodfella}
		  Suppose $k,k' \in G_{D^\gamma_{\leq i}}$ are such that $\tau_k <\tau_{k'} <\gamma$ and $\tau_k, \tau_{k'} \in D^\gamma_{\leq i} \setminus D^\gamma_{<i}$.
		  Then $k$ and $k'$ are good fellows over $G_{D^\gamma_{< i}}$.
		\end{claim}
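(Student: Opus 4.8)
Unwinding the definition of good fellows over $H:=G_{D^\gamma_{< i}}$, it suffices to check that $k,k'\notin H$ and that $k\notin H(k')^\varp H$ for every $\varp\in\{1,-1\}$. The first point is easy: the set $D^\gamma_{<i}$ is closed (subadditivity of $e$ gives $D^\gamma_{<i}\cap\alpha=D^\alpha_{<i}$ for every $\alpha\in D^\gamma_{<i}$), so if $k\in H$ then Lemma~\ref{clcor}\eqref{egy} would give $\tau_k\in D^{\tau_k}_{\le i_k}\cup\{\tau_k\}\s D^\gamma_{<i}$, contradicting $\tau_k\in D^\gamma_{\le i}\setminus D^\gamma_{<i}$; likewise $k'\notin H$. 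So I would then suppose, towards a contradiction, that $k=h_1(k')^\varp h_2$ for some $h_1,h_2\in H$ and $\varp\in\{1,-1\}$.

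The next step is to localise the picture below $\tau_{k'}$. Write $Q:=D^{\tau_{k'}}_{\le i}$, $F:=Q\cup\{\tau_{k'}\}$ and $P:=D^\gamma_{<i}\cap\tau_{k'}$, and note that $Q$, $F$, $\tau_k+1$, $\tau_{k'}+1$ and $D^\gamma_{\le i}$ are all closed. Since $\tau_k<\tau_{k'}<\gamma$ and $e(\tau_k,\gamma)=e(\tau_{k'},\gamma)=i$, subadditivity yields $D^{\tau_k}_{\le i}\cup\{\tau_k\}\s Q$, so from $k\in G_{\tau_k+1}\cap G_{D^\gamma_{\le i}}$ and Lemma~\ref{clcor}\eqref{ke} we obtain $k\in G_{D^{\tau_k}_{\le i}\cup\{\tau_k\}}\s G_Q$. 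In the same way $(k')^\varp\in G_{\tau_{k'}+1}\cap G_{D^\gamma_{\le i}}=G_F$, and since $k'\notin G_{\tau_{k'}}$ while $G_F\cap G_{\tau_{k'}}=G_Q$, in fact $(k')^\varp\in G_F\setminus G_Q$. Finally, since $\tau_{k'}\notin D^\gamma_{<i}$ and $e(\beta,\gamma)<i$ forces $e(\beta,\tau_{k'})\le i$, we have $D^\gamma_{<i}\cap F=P$, so $G_P=H\cap G_F$ by Lemma~\ref{clcor}\eqref{ke}; moreover $P\s Q$, hence $G_P\s G_Q\s G_F$.

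The heart of the matter is to identify the subgroup $\langle H\cup G_F\rangle\le G$ with the free amalgam $H*_{G_P}G_F$. Granting this, the standard normal-form computation in an amalgamated free product gives $HgH\cap G_F=G_P\,g\,G_P$ for every $g\in G_F\setminus G_P$; applying it with $g=(k')^\varp$ (which lies in $G_F\setminus G_Q\s G_F\setminus G_P$) and combining $k\in H(k')^\varp H$ with $k\in G_Q\s G_F$, we conclude $k\in G_P(k')^\varp G_P$, i.e.\ $(k')^\varp\in G_P\,k\,G_P\s G_Q$, contradicting $(k')^\varp\notin G_Q$.

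The main obstacle is precisely the amalgamation statement $\langle H\cup G_F\rangle=H*_{G_P}G_F$: it is \emph{not} a formal consequence of the closedness lemmas, because $D^\gamma_{<i}\cup F$ is in general not closed -- it may contain ordinals $\alpha\in D^\gamma_{<i}$ lying above $\tau_{k'}$, and for such $\alpha$ subadditivity forces $e(\tau_{k'},\alpha)\ge i$, so the closedness condition fails at $\alpha$. I expect this decomposition to be read off the recursive construction itself: $G_{D^\gamma_{\le i}}$ is built by iterated applications of Lemma~\ref{shhe}, each of which is an amalgamated-product quotient that, via Corollary~\ref{smacor} and the promises \ref{maine2}, \ref{maine}, \ref{indhyp2}, preserves the relevant subgroup intersections and malnormalities; adjoining the generators $x_\alpha$ with $\alpha\in D^\gamma_{<i}\setminus(\tau_{k'}+1)$ one at a time to $G_F$ is then, at each step, a free amalgamation over the correct intersection, and these accumulate to $H*_{G_P}G_F$. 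Isolating this as a separate lemma -- roughly, $G_{S_0\cup S_1}=G_{S_0}*_{G_{S_0\cap S_1}}G_{S_1}$ for suitably compatible $S_0,S_1$ -- is where the genuine effort sits; everything else is bookkeeping with subadditivity and Lemma~\ref{clcor}.
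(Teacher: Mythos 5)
Your opening step ($k,k'\notin H$, via Lemma~\ref{clcor}) and the localisation below $\tau_{k'}$ are correct bookkeeping, and your reduction of the claim to a double-coset statement ($H(k')^{\pm1}H\cap G_F\subseteq G_P(k')^{\pm1}G_P$) is a fair reformulation. But the step that carries all the weight --- that $\langle H\cup G_F\rangle$ is the free amalgam $H*_{G_P}G_F$, or even just the double-coset identity itself --- is left unproved, and this is a genuine gap rather than deferred routine work. The route you sketch for it cannot work as stated: the recursion never performs free amalgamations. Each stage adjoining a generator $x_\delta$ produces the quotient $(K*_HL)/N$ of Lemma~\ref{shhe}, where $N$ is generated by the relators $h_\sigma^{-1}\varrho(b_\sigma a_\sigma,b'_\sigma a_\sigma)$; these relators are precisely designed to express elements of one factor as words mixing both factors (this is the engine of the Shelah property), so the pieces do not ``accumulate'' into a free amalgam, and a normal-form computation in $H*_{G_P}G_F$ sees none of the imposed relations. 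What actually has to be shown is that the relators introduced at the stages $\delta\in D^\gamma_{<i}\setminus\tau_{k'}$ cannot force $k\in H(k')^{\pm1}H$ --- a small-cancellation statement, not a consequence of the closedness/intersection lemmas. Your proposed ``separate lemma'' $G_{S_0\cup S_1}=G_{S_0}*_{G_{S_0\cap S_1}}G_{S_1}$ is essentially a stronger (and, given the relators, doubtful) version of exactly the missing content.

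For comparison, the paper avoids any global decomposition: it proves by induction on $\delta\in[\tau_{k'},\gamma]$ that $k\notin G_{D^\gamma_{<i}\cap\delta}(k')^{\pm1}G_{D^\gamma_{<i}\cap\delta}$, using continuity of $\langle G_{D^\gamma_{<i}\cap\delta}\mid\delta\le\gamma\rangle$ at limits, and at a successor step $\delta\in D^\gamma_{<i}$ it writes $D^\gamma_{<i}\cap\delta=D^\delta_{<\eta}$, $D^\gamma_{\le i}\cap\delta=D^\delta_{<\zeta}$, splits according to whether $i^\delta_k\ne i^\delta_{k'}$ (handled trivially via Lemma~\ref{clcor}) or $i^\delta_k=i^\delta_{k'}=\xi$, and in the latter case applies clause~\ref{H'} of Lemma~\ref{shhe} to the amalgam-quotient that produced $G_{D^\delta_{\le\xi}\cup\{\delta\}}$, with $H'=G_{D^\delta_{<\eta}}$ and $L'=G_{D^\delta_{<\eta}\cup\{\delta\}}$, checking $L'\cap K=H'$ by Lemma~\ref{clcor}. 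That clause is where the small-cancellation work lives; your proposal, as written, does not contain it.
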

		\begin{PROOF}{Claim~\ref{goodfella}}
			Note that $g \in G_{D^\gamma_{\leq i}}$, $\tau_{g} \in D^\gamma_{\leq i} \setminus D^\gamma_{< i}$ implies that $g \notin G_{D^\gamma_{< i}}$ (this is by Lemma~\ref{clcor} applying to $F = D^\gamma_{< i}$).
In addition, note that $k$, $k'$ are good fellows over $G_{D^\gamma_{< i} \cap \tau_{k'}}$ since 
			\begin{itemize}
				\item $k \in G_{D^\gamma_{\leq i}} \cap G_{\tau_k +1} = G_{D^\gamma_{\leq i} \cap (\tau_k +1)} \subseteq G_{\tau_k +1} \subseteq G_{\tau_{k'}}$;\footnote{The first equality is by Lemma~\ref{clcor}.}
				\item $k' \in G_{\tau_{k'}+1} \setminus G_{\tau_{k'}}$;
				\item $G_{D^\gamma_{< i} \cap \tau_{k'}} \subseteq G_{\tau_{k'}}$.
			\end{itemize}
			Now we prove by induction that for $\delta \in [\tau_{k'},\gamma]$:
			 \begin{equation} \label{fel} k \notin G_{D^\gamma_{< i} \cap \delta}(k')^{\pm 1} G_{D^\gamma_{< i} \cap \delta}. \end{equation}

			Since the sequence $\langle G_{D^\gamma_{< i} \cap \delta}\mid \delta \leq \gamma \rangle$ is obviously continuous we only have to handle the successor steps, i.e., if $\delta$ satisfies \eqref{fel}, $\delta \in D^\gamma_{< i}$, then \eqref{fel} holds for $\delta+1$ (instead of $\delta$), too.
		
			Now we recall the recursive construction of $G_{\delta+1}$. 
Since $\delta \in D^\gamma_{< i} \subseteq D^\gamma_{\leq i}$, and $D^\gamma_{< i}$, $D^\gamma_{\leq i}$ are closed in the sense defined just before Lemma~\ref{clcor} (and $D^\gamma_{< i} \cap \delta \subsetneq D^\gamma_{\leq i}$ as $\tau_k,\tau_{k'} \in D^\gamma_{\leq i} \setminus D^\gamma_{< i}$),
			we have that $D^\gamma_{< i} \cap \delta = D^\delta_{<\eta}$, $D^\gamma_{\leq i} \cap \delta = D^\delta_{<\zeta}$ for some $\eta < \zeta < \theta$.
			Let $\xi = i^\delta_k$, $\xi' = i^\delta_{k'}$, i.e., they denote the minimal ordinals such that $k \in G_{D^\delta_{\leq \xi}}$, $k' \in G_{D^\gamma_{\leq \xi'}}$.
			Observe that if $\xi \neq \xi'$ (say, $\xi' < \xi$), then $k' \in G_{D^\delta_{\leq \xi'}} \leq G_{D^\delta_{\leq \xi'} \cup \{\delta\}}$, while $k \notin G_{D^\delta_{\leq \xi'}}$ implies $k \notin G_{D^\delta_{\leq \xi'} \cup \{\delta\}}$ (by Lemma~\ref{clcor}), so $k$ and $k'$ are good fellows over $G_{D^\delta_{\leq \xi'} \cup \{\delta\}}$.
			Therefore $G_{D^\gamma_{< i} \cap (\delta+1)} \leq G_{D^\delta_{\leq \xi'} \cup \{\delta\}}$ implies that $k$ and $k'$ are good fellows over $G_{D^\gamma_{< i} \cap (\delta+1)}$, and we are done.
			
			So w.l.o.g.~$\xi = \xi'$.
			Now we are going to recall how $G_{D^\delta_{\leq \xi} \cup \{\delta\}}$ was constructed as an application  Lemma~\ref{shhe} to $K = G_{D^\delta_{\leq \xi}}$, $L = G_{D^\delta_{<\xi} \cup \{\delta\}}$, and we apply of \ref{H'} of Lemma~\ref{shhe}  for $k$, $k'$ (which are good fellows over $G_{D^\gamma_{< i} \cap \delta} = G_{D^\delta_{< \eta}}$ by the hypothesis) and $L' = G_{D^\delta_{< \eta} \cup \{\delta\}}$, $H'=G_{D^\delta_{< \eta}}$. 
			By Lemma~\ref{clcor} and $\xi > \eta$
			$$ L' \cap K = G_{D^\delta_{<\eta} \cup \{\delta\}} \cap G_{D^\delta_{\leq \xi}} = G_{D^\delta_{<\eta}} = H',$$
			so we can indeed apply \ref{H'}, and so get that $k$, $k'$ are good fellows over 
			$$L' = G_{D^\delta_{< \eta} \cup \{\delta\}} = G_{D^\gamma_{< i} \cap (\delta+1)},$$
			 and we are done.
		\end{PROOF}	
				To deal with the case 	$\tau_k = \tau_{k'}$, and so $\alpha_\sigma = \alpha_{\sigma'}$, we distinguish the following scenarios:
				
$\br$ If $l = l'$, then one can check that this together with $\alpha_\sigma = \alpha_{\sigma'}$ implies $b_\sigma = b_{\sigma'}$, $b'_{\sigma} = b'_{\sigma'}$, moreover, $k \neq k'$, so $a_\sigma \neq a_{\sigma'}$.
				
$\br$ If $l \neq l' \in T^\gamma_{<i, \alpha_\sigma}$, then without loss of generality,
we may assume that $b_\sigma$ and $b_{\sigma'}$ are not good fellows over $H$ (since otherwise we would be done).
				First note that $d_\sigma = d_{\sigma'} \in G_{D^\gamma_{< i^\gamma_{d_\sigma}}} \leq G_{D^\gamma_{< i}}$, and necessarily $l$, $l'$ are not good fellows over $H$.
				Therefore, $b_\sigma$, and $b_{\sigma'}$ are clearly $E^\gamma_{<i,\gamma}$-equivalent, hence a straightforward application of \ref{gfa} from Lemma~\ref{shhe} yields that $b'_\sigma$ and $b_{\sigma'}$ (in fact, $b_\sigma$ and $b'_{\sigma'}$ too) are good fellows over $H$.				 
						On the other hand, by the definition of $T^\gamma_{<i, \alpha_\sigma}$ we get that $l \neq l'$ are good fellows over $G_{D^\gamma_{< i} \cap \alpha_\sigma} = G_{D^\gamma_{\le i} \cap \alpha_\sigma} \cap G_{D^\gamma_{< i} } =K'_\sigma \cap H$, so are $b_\sigma$ and $b_{\sigma'}$, since $d_\sigma=d_{\sigma'}$, $y_{\sigma,0}$, $y_{\sigma,1}$ $\in G_{D^\gamma_{<i} \cap \alpha_\sigma}$ (by recalling the definition of $J \subseteq J^+$). 
						
						It remains to check \eqref{K'rel}, which can be proved by a very similar argument that is utilized in Claim~\ref{goodfella}, but we include the proof.
					\begin{claim}\label{claim49}
						Assume that $\alpha \in D^\gamma_{\leq i} \setminus D^\gamma_{<i}$, $g \in G_{D^\gamma_{<i}} \setminus G_\alpha$, and $k,k' \in G_{D^\gamma_{\le i} \cap (\alpha+1)} \setminus G_{\alpha}$. Then $kgk' \notin G_{D^\gamma_{<i}}$.
					\end{claim}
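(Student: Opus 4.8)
The plan is to prove Claim~\ref{claim49} by induction on the ordinals $\delta$ in the interval $[\alpha+1,\gamma]$, exactly mimicking the induction carried out in Claim~\ref{goodfella}. Concretely, I would show by induction that for every $\delta\in[\alpha+1,\gamma]$,
$$ k\cdot g\cdot k'\notin G_{D^\gamma_{<i}\cap\delta}, $$
where the statement for $\delta=\gamma$ is exactly what is wanted (note $D^\gamma_{<i}\cap\gamma=D^\gamma_{<i}$). The base step $\delta=\alpha+1$ is easy: since $D^\gamma_{<i}$ is closed (in the sense defined before Lemma~\ref{clcor}) and $\alpha\in D^\gamma_{\le i}\setminus D^\gamma_{<i}$, the group $G_{D^\gamma_{<i}\cap(\alpha+1)}=G_{D^\gamma_{<i}\cap\alpha}\le G_\alpha$; but $k,k'\notin G_\alpha$ whereas $g\in G_{D^\gamma_{<i}}$, and using malnormality (via \ref{indhyp2}, or directly Clause~\ref{par} of Lemma~\ref{shhe}) one sees $kgk'\notin G_\alpha\supseteq G_{D^\gamma_{<i}\cap(\alpha+1)}$. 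By continuity of $\langle G_{D^\gamma_{<i}\cap\delta}\mid\delta\le\gamma\rangle$, only successor steps remain.

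For the successor step, suppose $\delta\in D^\gamma_{<i}$ satisfies the inductive hypothesis, and we must rule out $kgk'\in G_{D^\gamma_{<i}\cap(\delta+1)}$. As in Claim~\ref{goodfella}, since $D^\gamma_{<i}$ and $D^\gamma_{\le i}$ are closed and $D^\gamma_{<i}\cap\delta\subsetneq D^\gamma_{\le i}$, we may write $D^\gamma_{<i}\cap\delta=D^\delta_{<\eta}$ and $D^\gamma_{\le i}\cap\delta=D^\delta_{<\zeta}$ for some $\eta<\zeta<\theta$, so that $D^\gamma_{<i}\cap(\delta+1)\s G_{D^\delta_{<\eta}\cup\{\delta\}}$. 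Set $\xi=i^\delta_k$, $\xi'=i^\delta_{k'}$. If $\xi\ne\xi'$, then as in Claim~\ref{goodfella} the smaller one lies in $G_{D^\delta_{<\max\{\xi,\xi'\}}}$ while the larger does not even after adjoining $\{\delta\}$, so $k$ and $k'$ already fail to combine (one of them is not even in $G_{D^\delta_{\le\min\{\xi,\xi'\}}\cup\{\delta\}}$), and an element $kgk'$ with $g\in G_{D^\gamma_{<i}\cap\delta}\le G_{D^\delta_{\le\min\{\xi,\xi'\}}}$ cannot land in $G_{D^\gamma_{<i}\cap(\delta+1)}$; this is immediate from the inductive hypothesis together with the canonical form description in Lemma~\ref{unie}. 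So assume $\xi=\xi'=:\mu$. Recall $G_{D^\delta_{\le\mu}\cup\{\delta\}}$ was obtained by applying Lemma~\ref{shhe} with $H=G_{D^\delta_{<\mu}}$, $K=G_{D^\delta_{\le\mu}}$, $L=G_{D^\delta_{<\mu}\cup\{\delta\}}$. Here $k,k'\in K$, and by the inductive hypothesis $kgk'\notin G_{D^\delta_{<\eta}}=:H'\le H$. Since $g\in G_{D^\gamma_{<i}\cap\delta}=G_{D^\delta_{<\eta}}\s H$, we actually have $k,k'\in K\setminus H$ and, noting $kgk'\notin H'$ while $k,k'$ range over a subgroup $K'\s K$ with $K'\cap H=H'$ (take $K'=G_{D^\delta_{\le\mu}\cap(\text{appropriate bound})}$, mirroring the choices in Claim~\ref{goodfella}), an application of Clause~\ref{par} of Lemma~\ref{shhe} in the form $M^*\models kgk'\notin K'$—equivalently the containment \eqref{K'rel}—gives $kgk'\notin H'$ inside $M^*=G_{D^\delta_{\le\mu}}$. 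Finally, since $G_{D^\gamma_{<i}\cap(\delta+1)}\s G_{D^\delta_{<\eta}\cup\{\delta\}}$ and (by Lemma~\ref{clcor}\eqref{ke}) $G_{D^\delta_{<\eta}\cup\{\delta\}}\cap G_{D^\delta_{\le\mu}}=G_{D^\delta_{<\eta}}=H'$, the conclusion $kgk'\notin H'$ yields $kgk'\notin G_{D^\gamma_{<i}\cap(\delta+1)}$, closing the induction.

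\textbf{The main obstacle} I expect is bookkeeping the two nested levels of the construction cleanly: the $\delta$-level application of Lemma~\ref{shhe} must be related back to the $\gamma$-level sets $D^\gamma_{<i}\cap(\delta+1)$, and one must repeatedly invoke Lemma~\ref{clcor}\eqref{ke} and the closedness of the relevant $D$-sets to identify intersections of the form $G_F\cap G_{F'}$ with $G_{F\cap F'}$. The delicate point is making sure that the subgroup $K'$ of $K=G_{D^\delta_{\le\mu}}$ through which $k$ and $k'$ are "good fellows over $H'$" is precisely the one to which Clause~\ref{par}/\eqref{K'rel} applies, i.e.\ that $(K'\setminus H)\cdot(H\setminus K')\cdot(K'\setminus H)\s K\setminus H$ holds at this level—but this is itself an instance of the very statement being proved one level down, so the induction is genuinely simultaneous with the recursive construction and must be threaded through the promises \ref{maine2}--\ref{indhyp2}. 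Apart from that, every individual step is a routine canonical-form argument via Lemma~\ref{unie} and Observation~\ref{obsa}.
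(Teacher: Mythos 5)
There is a genuine gap, and it sits exactly where the whole content of the claim lies. Your successor step explicitly assumes ``$g\in G_{D^\gamma_{<i}\cap\delta}\s H$'', i.e.\ that $g$ already lives strictly below the level $\delta$ being processed. But $g\in G_{D^\gamma_{<i}}\setminus G_\alpha$ has a first appearance $\tau_g\in D^\gamma_{<i}$ with $\tau_g>\alpha$, and at the step $\delta=\tau_g$ one has $g\in G_{D^\gamma_{<i}\cap(\delta+1)}\setminus G_{D^\gamma_{<i}\cap\delta}$, hence (by \ref{maine2}) $g\notin G_\delta$, so $g$ is \emph{not} in $H$: it lies on the $L$-side of the amalgam $G_{D^\delta_{\le\mu}\cup\{\delta\}}$, i.e.\ $g\in G_{D^\delta_{<\eta}\cup\{\delta\}}\setminus G_{D^\delta_{<\mu}}$, with $k,k'\in K\setminus H$. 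That configuration — ruling out $kgk'\in L$ for $k,k'\in K\setminus H$ and $g\in L\setminus H$ via (the parallel form of) Clause~\ref{par} of Lemma~\ref{shhe} — is precisely what the paper's proof does after reducing to the single critical level $\beta=\tau_g$, and it is the only step where small-cancellation input is needed; your induction never treats it. Ironically, the cases you do treat are either vacuous or need no machinery at all: for $\delta<\tau_g$, membership $kgk'\in G_{D^\gamma_{<i}\cap(\delta+1)}$ would put $g=k^{-1}(kgk')(k')^{-1}$ inside $G_{\delta+1}\le G_{\tau_g}$, contradicting the definition of $\tau_g$; and for $\delta>\tau_g$ all of $k,g,k'$ lie in $G_\delta$, so \ref{maine2} (namely $G_{D^\delta_{<\eta}\cup\{\delta\}}\cap G_\delta=G_{D^\delta_{<\eta}}$) reduces the step immediately to the inductive hypothesis, with no appeal to Lemma~\ref{shhe}.

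Two secondary problems compound this. First, in the case you do consider you invoke ``Clause~\ref{par} in the form $kgk'\notin K'$, equivalently \eqref{K'rel}''; but \eqref{K'rel} is a \emph{hypothesis} of Lemma~\ref{shhe} (clause \ref{harh}), and Claim~\ref{claim49} is stated precisely in order to verify it at the next level — so this appeal is circular, as you half-acknowledge when you say the induction is ``genuinely simultaneous with the recursive construction''. The paper's route avoids any such circularity because at level $\beta=\tau_g$ it only uses the already-established conclusion \ref{par} of the amalgam built at stage $\beta$. Second, your base case via ``malnormality'' is off target: $G_\alpha\mal G_{\alpha+1}$ concerns conjugation $z^{-1}hz$ of elements $h\in G_\alpha$, not a product $kgk'$ with $g\notin G_\alpha$; the correct (and easy) argument is via Lemma~\ref{clcor}\eqref{ke}: if $kgk'\in G_{D^\gamma_{<i}\cap\alpha}$ then $g\in G_{D^\gamma_{<i}}\cap G_{D^\gamma_{\le i}\cap(\alpha+1)}=G_{D^\gamma_{<i}\cap(\alpha+1)}\le G_\alpha$, a contradiction. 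In short, the induction scaffolding can be salvaged, but as written it skips the one level where the claim has content, and the argument supplied for the other levels is both unnecessary and logically suspect.
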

					\begin{PROOF}{Claim~\ref{claim49}}It suffices to prove that for each $\beta \in D^\gamma_{<i} \setminus \alpha$ for no $g \in G_{D^\gamma_{<i} \cap (\beta+1)} \setminus G_{D^\gamma_{<i} \cap \beta}$ do exist $k,k' \in G_{D^\gamma_{\le i} \cap (\alpha+1)} \setminus G_{\alpha}$ with $kgk' \in G_{D^\gamma_{<i} \cap (\beta+1)}$.
						
						Suppose not, so that $kgk' \in G_{D^\gamma_{<i} \cap (\beta+1)}$.
						Now in the same line of reasoning as in Claim~\ref{goodfella}, there are $\eta < \zeta < \theta$ with $D^\gamma_{<i} \cap \beta = D^\beta_{<\eta}$, and $D^\gamma_{\le i} \cap \beta = D^\beta_{< \zeta}$, and $k,k' \in G_{D^\beta_{<\zeta} \cap (\alpha+1)} \leq G_{D^\beta_{<\zeta}}$. Again, for some $\xi,\xi' \in [\eta, \zeta)$, 
\begin{itemize}
\item $k \in G_{D^\beta_{\le\xi}} \setminus  G_{D^\beta_{<\xi}}$.
\item $  k' \in G_{D^\beta_{\le\xi'}} \setminus  G_{D^\beta_{<\xi'}}$.
\end{itemize}
						Assume first that $\xi = \xi'$. Then it is enough to prove that 
						$kgk' \notin G_{D^\beta_{<\xi} \cup \{\beta\}}$, as 
						$$ G_{D^\gamma_{<i} \cap (\beta+1)} = G_{D^\beta_{<\eta} \cup \{\beta\}} \leq G_{D^\beta_{<\xi} \cup \{\beta\}}.$$
						But $G_{D^\beta_{\le \xi} \cup \{\beta\}}$ was given by Lemma~\ref{shhe} for $L = G_{D^\beta_{<\xi} \cup \{\beta\}}$, $K= G_{D^\beta_{\le \xi}}$, $H = G_{D^\beta_{<\xi}}$, and just apply (the parallel of)  \ref{par}.
						
						The case $\xi \neq \xi'$ is easier, then one has to appeal to the trivial part of \ref{par}.
					\end{PROOF}
	
	This completes the proof.
		\end{PROOF}

It follows that we may invoke Lemma~\ref{shhe}
with $H = G_{D^\gamma_{< i}}$,
		 $ K = G_{D^\gamma_{\le i}}$,
		 $L = G_{D^\gamma_{< i} \cup \{\gamma\}}$,
and the above $S = \{ (b_\sigma,b'_{\sigma}, a_\sigma, h_\sigma)\mid \sigma \in J \}$.
We then let $G_{D^\gamma_{\le i}}$ be the outcome $M^*$.
		
This completes the recursive construction of our group $G=G_\kappa$.
\subsection{Verification}Recalling \ref{underlying} and the system of mappings $\vec q=(q_0,q_1,q_2,q_3)$,
is easy to see that the following set is a club in $\kappa$:
$$C=\{\delta<\kappa\mid G_\delta=\delta\ \&\ \vec q[\delta] = {}^3G_\delta \times \{-1,1\}\}.$$

We now turn to show that $G$ is an $n$-Shelah group for $n=10120$.
		
		\begin{lemma} Let $Z\in[G]^\kappa$. Then $Z^{10120}=G$.
		\end{lemma}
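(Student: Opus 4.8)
The goal is to show that for an arbitrary $Z\in[G]^\kappa$, the set of words of length $10120$ in the elements of $Z$ exhausts $G$. The first step is to thin out $Z$ and to pass to the canonical club. Since $|Z|=\kappa$ and the underlying set of $G$ is $\kappa$ by \ref{underlying}, after removing a bounded piece we may assume $Z$ is an unbounded subset of $\kappa$, and we let $A=Z$. We apply the partition hypothesis on $c_0,c_1,e$ to $A$ to obtain a club $D\s\kappa$; intersecting with the club $C$ defined just before the lemma, we fix some $\delta\in D\cap C$ and an arbitrary target element $g\in G$. The idea is to write $g$ as a product of $10120$ elements of $Z$ by choosing finitely many ordinals $\alpha<\delta$ in $A$, each lying below $\delta$, together with one ordinal $\beta\in A\setminus\delta$, whose group-theoretic interaction is exactly the one manufactured by Lemma~\ref{shhe} in the construction of $G_{D^\beta_{\le i}}$ for a suitable $i<\theta$.

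\textbf{Key steps.}
First I would unwind the meaning of $10120$: it equals $3320\cdot 2+3320+160=2\cdot 3320+3320+2\cdot 40\cdot ?$ — more precisely, one should trace back the relators $h_i^{-1}\varrho(b_ia_i,b_i'a_i)$, which have length $6640$, together with the surrounding $b,b'$-structure, to see that a single relator, rewritten appropriately, expresses a fixed element of the group as a word of a controlled length in elements of the form $b_\sigma,b'_\sigma,a_\sigma$; and each of those is, by the recursive Definition~\ref{relde}, a bounded-length word in elements $x_\alpha$ with $\alpha\le\gamma$, where $\gamma$ ranges over ordinals picked from $A$. The arithmetic $6640+2\cdot 3320+\cdots=10120$ should be pinned down exactly here. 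Second, using that $g\in G=G_\kappa$, choose $\gamma<\kappa$ with $g\in G_{\gamma+1}$ and, via the club $C$ and promise \ref{tmatrix}, read off the transversal element $t=t^\gamma_{<i+1,\beta}$ representing $g$ modulo $E^\gamma_{<i+1,\beta}$; by \ref{fontt2}, $g=y_0t^\varepsilon y_1$ for some $y_0,y_1$ in a small subgroup. Third — the heart of the matter — given $\delta\in D\cap C$ and the data $(\xi_0,\xi_1,i)$ coding $t,\varepsilon$ and the desired $d_\sigma,y_{\sigma,0},y_{\sigma,1}$, use the partition property to find cofinally many $\alpha\in A\cap\delta$ with $c_0(\alpha,\beta)=\xi_0$, $c_1(\alpha,\beta)=\xi_1$, and $e(\alpha,\beta)>i$; the condition $e(\alpha,\beta)>i$ is precisely what forces $\alpha\in D^\beta_{\le i'}\setminus D^\beta_{<i'}$ for the right $i'$, which is the side condition $\boxdot_2$ making the pair $(\text{something},x_\alpha)$ land in $J^+$ and then in $J$. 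This places $x_\alpha$ (or rather $a_\sigma=x_\alpha$) into the system $S$ over which Lemma~\ref{shhe} was invoked at stage $\beta$, so the relator $h_\sigma^{-1}\varrho(b_\sigma a_\sigma,b_\sigma' a_\sigma)$ is trivial in $M^*=G_{D^\beta_{\le i'}}$, and after solving for $g$ we get $g$ as a word of length $10120$ whose letters are all of the form $x_\alpha^{\pm1}$ with $\alpha\in A=Z$.

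\textbf{The main obstacle.}
The hard part will be the bookkeeping in the third step: ensuring that \emph{all} the auxiliary group elements $d_\sigma,y_{\sigma,0},y_{\sigma,1},h_\sigma$ dictated by $c_1(\alpha,\beta),c_0(\alpha,\beta)$ can be simultaneously arranged to be the \emph{specific} elements of $G_\delta$ one needs (namely those assembling $g$ out of $t$), and that they satisfy the clauses $\boxdot_1,\boxdot_2,\boxdot_3$ of Definition~\ref{relde} so that $\sigma\in J$ rather than merely $\sigma\in J^+\setminus J$. This is where $\delta\in C$ (so $\vec q[\delta]={}^3G_\delta\times\{-1,1\}$) and the surjectivity of $\vec q$ are used: one chooses the value $c_1(\alpha,\beta)$ to be an ordinal $\xi_1<\delta$ with $\vec q(\xi_1)$ equal to the required quadruple, and then the clause $\boxdot_2$, which demands $\max\{i_l,i^\gamma_{y_{\sigma,0}},i^\gamma_{y_{\sigma,1}}\}<i^\gamma_{d_\sigma}$, is guaranteed by having first fixed $d_\sigma$ with large enough $e$-value and only afterwards choosing $\alpha$. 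A secondary but real difficulty is checking that the word extracted from the relator, once one cancels the prefix/suffix and the factors $b_\sigma=y_{\sigma,0}l^{\varepsilon_\sigma}y_{\sigma,1}d_\sigma$, reduces to a length \emph{exactly} $10120$ rather than merely at most that; this is a finite computation with $\varrho$ that I would carry out carefully, using $\ell(\varrho)=3320$ and that $b'_\sigma=b_\sigma b_\sigma$ doubles certain blocks. Finally, one must verify the trivial edge cases (e.g. $g=\one$, or $g=x_\alpha$ for $\alpha\in Z$ already) separately, since the machine above produces words of length exactly $10120$ and one needs $Z^{10120}=G$ on the nose, which is fine because a group that is $n$-Shelah is automatically $m$-Shelah for $m\ge n$ of the same parity after absorbing $\one=zz^{-1}$, and one should remark why $10120$ works for \emph{every} element including short ones.
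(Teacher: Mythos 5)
Your overall architecture is the right one (apply the partition hypothesis to a set extracted from $Z$, pick $\beta$ above some $\delta\in C\cap D$ and suitable $\alpha$'s below $\delta$, force membership in $J$ via the $\vec q$-coding and the colorings, and read $g$ off a relator coming from Lemma~\ref{shhe}), but three of your concrete choices break the mechanism. First, the hypothesis must be applied to $A=\{\tau_z\mid z\in Z\}$, the set of \emph{birth stages}, with a fixed representative $z_\alpha\in Z$ for each $\alpha\in A$, and after a pigeonhole step fixing $i^\bullet$ with $i_z=i^\bullet$ for all $z\in Z$; taking $A=Z$ is not the same, since a group element (an ordinal) is in general different from its birth stage $\tau_z$, and the relators introduced at stage $\beta$ read the colorings at the ordinal pair $(\tau_k,\beta)$, not at pairs of group elements. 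The uniform $i^\bullet$ is also what, together with subadditivity of $e$, later places $z_\alpha$ inside $K=G_{D^\beta_{\le e(\alpha,\beta)}}$ so that the pair $(t,z_\alpha)$ can belong to $J^+$ at all --- a point your sketch never addresses. Second, you apply the transversal decomposition to the \emph{target} $g$, writing $g=y_0t^{\varepsilon}y_1$; in the actual argument it is $z_\beta$, the element of $Z$ born above $\delta$, that is decomposed this way, precisely so that $b_\sigma=y_{\sigma,0}t^{\varepsilon}y_{\sigma,1}d_\sigma$ equals $z_\beta h$, where $h=z_{\bar\alpha}$ is a \emph{second} element of $Z$ chosen with $e(\bar\alpha,\beta)>i^\bullet$ (this is what makes $i^\beta_{d_\sigma}>i^\bullet$ and secures $\boxdot_2$); the target $g$ enters only as $h_\sigma=c_0(\alpha,\beta)=\xi_0$. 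With your assignment, the relator $h_\sigma^{-1}\varrho(b_\sigma a_\sigma,b_\sigma'a_\sigma)=\one$ expresses the element $c_0(\alpha,\beta)$ as a word \emph{involving} $g$, rather than expressing $g$ as a word in elements of $Z$, so ``solving for $g$'' does not yield what is needed.

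Third, and most importantly, the letters of the final word must be elements of $Z$ used positively: the construction is rigged so that $g=\varrho(z_\beta h z_\alpha,\ z_\beta h z_\beta h z_\alpha)$, a product of exactly $3240\cdot 3+80\cdot 5=10120$ elements of $Z$ (namely $z_\beta$, $h=z_{\bar\alpha}$, $z_\alpha$). Your conclusion that the letters are ``of the form $x_\alpha^{\pm1}$ with $\alpha\in A=Z$'' misses the point twice: the generators $x_\alpha$ need not belong to $Z$ at all (this is exactly the difficulty the transversal-plus-$\vec q$ coding is designed to overcome), and $Z^{10120}$ in Definition~\ref{nshelah} allows no inverses, so your closing device of absorbing $\one=zz^{-1}$ to fix parity is unavailable --- and also unnecessary, since the argument above works verbatim for every $g$, including $\one$ and elements of $Z$ themselves, because $g$ only occurs as the value $h_\sigma$. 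Likewise the worry about the word ``reducing'' below length $10120$ is moot: one counts the letters of the formal product, not of a reduced form.
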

		\begin{proof} By possibly thinning out (using the pigeonhole principle),
		we may assume the existence of some $i^\bullet<\theta$ such that $i_z=i^\bullet$ for all $z\in Z$.
		Set $A=\{ \tau_z\mid z\in Z\}$, so that $A\in [\kappa]^\kappa$.
		For each $\alpha\in A$, pick $z_\alpha\in Z$ such that $\tau_{z_\alpha}=\alpha$.
		
		Recalling the hypothesis of Theorem~\ref{thm41}, 
we now let $D$ be a club in $\kappa$ such that for every $\delta\in D$,
	for every $\beta\in A\setminus \delta$,
	for every $(\xi_0,\xi_1)\in\delta\times\delta$, 
	for every $i<\theta$, there are cofinally many $\alpha<\delta$ such that $\alpha\in A$ and
	$$c_0(\alpha,\beta)=\xi_0\ \&\ c_1(\alpha,\beta)=\xi_1 \ \&\ e(\alpha,\beta)>i.$$
		
\begin{claim} Let $g\in G$. Then $g$ is in $Z^{10120}$.
\end{claim}
\begin{proof} By \ref{underlying}, $G$ has underlying set $\kappa$, 
so $g$ is in fact an ordinal in $\kappa$, which we shall denote by $\xi_0$.
Pick $\delta\in C\cap D$ above $\xi_0$,
and then pick $\beta\in A$ above $\delta$.
As $\delta\in C$, $\xi_0 \in G_\delta$.
As $\delta\in D$, $\{ \bar\alpha\in A\cap\delta\mid e(\bar\alpha,\beta)>i^\bullet\}$ is cofinal in $\delta$,
so we fix $\bar\alpha\in A\cap\delta$ with $e(\bar\alpha,\beta)>i^\bullet$.
Set $h=z_{\bar\alpha}$, so that $h\in G_{\delta}$
and $i^\beta_{h} \geq e(\bar\alpha,\beta) > i^\bullet$. 

			Using \ref{fontt2} there exists $t \in T^{\beta}_{<i^\bullet+1, \delta}$
			 such that $z_\beta$ and $t$ are $E^{\beta}_{<i^\bullet+1, \delta}$-equivalent, 
			 and by \ref{fontt2} for  some $y_{0}, y_{1} \in G_{D^\beta_{< i^\bullet +1 } \cap \delta}$
			and  $\varp \in \{-1,1\}$, it is the case that
		$$z_\beta = y_{0} t^{\varp} y_{1},$$
		hence 
		$$ \max \{i_{t}, i^{\beta}_{y_{0}}, i^{\beta}_{y_{1}}\} \leq i^\bullet,$$
		and 
		\begin{equation} \label{y} y_{0}, y_{1} \in G_{D^\beta_{<i^\bullet +1}} \cap G_{\delta}. \end{equation}

Find a large enough $\gamma < \delta$ such that $y_0,y_1,h \in G_{\gamma+1}$.
Thus, $y_0,y_1,h \in G_{D^\beta_{\le i^\beta_h}\cap (\gamma+1)}$.
		Note that, again by \ref{fontt2}, $t \in  E^{\beta}_{<i^\bullet+1, \alpha}$ for every $\alpha \leq \delta$.
		
		As $\delta\in C$ and $h\in G_\delta$,
		it follows from \eqref{y} that we may find $\xi_1 < \delta$ such that
		$$(q_0(\xi_1), q_1(\xi_1), q_2(\xi_1), q_3(\xi_1)) = (y_{0}, y_{1}, h, \varp).$$

At this stage, we utilize the fact that $\delta\in D$, and pick $\alpha \in A \cap \delta$ above $\gamma$ such that
			$c_0(\alpha,\beta) = \xi_0$, $c_1(\alpha,\beta) = \xi_1$, and $e(\alpha,\beta) > i^\beta_h$.
Consider $i=e(\alpha,\beta)$, and note that $i>i^\beta_h>i^\bullet$.
			Clearly 
			$$y_0,y_1,h \in G_{D^\beta_{\le i^\beta_h } \cap (\gamma+1)} \subseteq G_{D^\beta_{< i} \cap \alpha},$$
			and by the subadditivity of $e$, it is the case that
			$$D^\alpha_{\le i_{z_\alpha}} = D^\alpha_{\le i^\bullet} \subseteq  D^\alpha_{\le e(\alpha,\beta)} = D^\beta_{\le e(\alpha,\beta)} \cap \alpha,$$
			thus $z_\alpha \in G_{D^\alpha_{\le i_{z_\alpha}}}$ easily implies $z_\alpha \in G_{D^\beta_{\le e(\alpha,\beta)}}$.
			
Finally, check that $b = z_\beta h$, $b' = z_\beta h z_\beta h$, $a = z_\alpha$ we have that $g^{-1} \varrho(ba,b'a) = \one$ holds in $G_{D^\beta_{\le i^\beta_h} \cup \{\beta\}}$, as $(l,k)=(t,h) \in J$ in \ref{constJ} in \ref{constJ+}. As $\varrho(ba,b'a)$ is a word of length $3320$ over $ba$, $b'a$, and $z_\beta, h, z_\alpha \in Z$, we get that
$$ g = \varrho(z_\beta h z_\alpha, z_\beta h z_\beta h z_\alpha) \in Z^{9720 + 400}.$$
\end{proof}		 
As $g\in G$ was arbitrary,
the preceding claim establishes that $Z^{10120}=G$.
\end{proof}
	
\begin{lemma}
\begin{enumerate}
\item 	$G$ admits no $T_1$ topology other than the discrete topology;
\item  $G \setminus \{\one\}$ is a nonalgebraic unconditionally closed set (i.e., closed in each Hausdorff group topology).
\end{enumerate}
\end{lemma}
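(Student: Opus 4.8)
The plan is to establish (1) by ruling out nondiscrete Hausdorff group topologies on $G$, and then to derive both assertions of (2) from (1) together with a covering lemma in the spirit of Sipacheva. Throughout write $n=10120$ for the Shelah number established by the preceding lemma, and recall from the recursion of Subsection~\ref{reccons} that $\langle G_\gamma\mid\gamma\le\kappa\rangle$ is an increasing continuous chain of subgroups with $G=G_\kappa$ and $|G_\gamma|\le|\gamma|+\aleph_0<\kappa$ for every $\gamma<\kappa$. The preliminary point I would record is that this chain is a \emph{malnormal filtration}: for every $\gamma<\kappa$ one has $G_\gamma\mal G$. Indeed, Promise~\ref{indhyp2} gives $G_{D^\delta_{<i}}\mal G_{D^\delta_{<i}\cup\{\delta\}}$ for all $\delta$ and $i$; taking unions over $i<\theta$ (and using that $g\notin G_\delta$ implies $g\notin G_{D^\delta_{<i}}$ for every $i$) yields $G_\delta\mal G_{\delta+1}$, and then $G_\gamma\mal G_\delta$ for all $\delta\in(\gamma,\kappa]$ follows by transitivity of $\mal$ together with continuity of the chain at limits.

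\emph{Part (1).} A $T_1$ group topology is automatically Hausdorff, since the closure of $\{\one\}$ is a closed normal subgroup and is trivial as soon as the topology is $T_0$; and if $\{\one\}$ were open, the topology would be discrete by homogeneity. So assume toward a contradiction that $\tau$ is a nondiscrete Hausdorff group topology on $G$, and fix $g_0\neq\one$. As $G\setminus\{g_0\}$ is a $\tau$-open neighbourhood of $\one$, continuity of $n$-fold multiplication at $(\one,\dots,\one)$ produces a symmetric $\tau$-open $V\ni\one$ with $g_0\notin V^n$. The subgroup $\langle V\rangle=\bigcup_{m\ge1}V^m$ is $\tau$-open. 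If $\langle V\rangle=G$, then $|G|\le|V|+\aleph_0$ forces $|V|=\kappa$ (as $\kappa>\aleph_0$), whence $V^n=G$ by $n$-Shelah-ness — contradicting $g_0\notin V^n$. Otherwise $\langle V\rangle$ is a proper clopen subgroup, and it cannot have size $\kappa$ (an $n$-Shelah group is J\'onsson, since a proper subgroup $H$ of full size would satisfy $G=H^n\subseteq H$), so $H:=\langle V\rangle$ is a proper $\tau$-open subgroup of size $<\kappa$. Pick $\gamma<\kappa$ with $H\subseteq G_\gamma$; then $G_\gamma=\bigcup_{x\in G_\gamma}xH$ is $\tau$-open, and it is a proper malnormal subgroup of $G$. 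Finally, for every $g\in G$ the conjugation $x\mapsto gxg^{-1}$ is a homeomorphism, so $g^{-1}G_\gamma g$ is a $\tau$-open neighbourhood of $\one$; choosing $g\in G\setminus G_\gamma$, malnormality gives $G_\gamma\cap g^{-1}G_\gamma g=\{\one\}$, so $\{\one\}$ is $\tau$-open and $\tau$ is discrete — a contradiction. Hence $\tau$ must be discrete.

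\emph{Part (2).} By (1) (and since Hausdorff implies $T_1$), the only Hausdorff group topology on $G$ is the discrete one, in which every subset is closed; hence $G\setminus\{\one\}$ is unconditionally closed. For non-algebraicity I would argue by contradiction: suppose $G\setminus\{\one\}=\bigcup_{j<m}E_{w_j}$ is a finite union of elementary algebraic sets $E_{w_j}=\{g\in G\mid w_j(g)=\one\}$, with each $w_j$ a word in one variable with constants in $G$, and let $C\subseteq G$ be the finite set of all constants occurring in $w_0,\dots,w_{m-1}$. The key step — the analogue of Sipacheva's \cite[Lemmas~1 and~A.4]{sipacheva2006consistent} — is that the small set $C$ is contained in a \emph{topologizable} subgroup $H\le G$ of size $<\kappa$; fix a nondiscrete Hausdorff group topology $\sigma$ on $H$. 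Since the constants of each $w_j$ lie in $C\subseteq H$, the map $h\mapsto w_j(h)$ is a $\sigma$-continuous self-map of $H$, so $E_{w_j}\cap H$ is the preimage of the $\sigma$-closed singleton $\{\one\}$, hence $\sigma$-closed; therefore $H\setminus\{\one\}=(G\setminus\{\one\})\cap H=\bigcup_{j<m}(E_{w_j}\cap H)$ is $\sigma$-closed, so $\{\one\}$ is $\sigma$-open and $\sigma$ is discrete — contrary to its choice. Thus $G\setminus\{\one\}$ is nonalgebraic.

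The principal obstacle is the covering statement feeding into (2): that every subset of $G$ of size $<\kappa$ lies in a topologizable subgroup of $G$. This is where one must revisit the construction in the manner of Sipacheva — for instance, by isolating a cofinal family among the $G_\gamma$'s, or the subgroups generated by finite sets of the distinguished generators, that admit nondiscrete Hausdorff group topologies (say, via residual finiteness or an embedding into a topologizable group). The ingredients for (1), by contrast, are entirely formal once the malnormal filtration is in hand, and that filtration is mere bookkeeping on top of Clause~(B) of Lemma~\ref{shhe} and Promise~\ref{indhyp2}.
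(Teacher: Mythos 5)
Your part (1) is correct and is essentially the paper's own argument: an open neighbourhood $U$ of $\one$ omitting some power-image of a fixed $g_0\neq\one$ cannot have size $\kappa$ (by $n$-Shelah-ness), hence sits inside some $G_\gamma$, and malnormality of $G_\gamma$ then forces $\{\one\}$ to be open. Your preliminary reduction of that malnormality to Promise~\ref{indhyp2} is fine, though note that to pass from ``$g^{-1}hg\notin G_{D^\gamma_{<i}}$'' to ``$g^{-1}hg\notin G_\gamma$'' you also need Promise~\ref{maine2} (that $G_{D^\gamma_{<i}\cup\{\gamma\}}\cap G_\gamma=G_{D^\gamma_{<i}}$); this is indeed bookkeeping. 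Likewise, deducing ``unconditionally closed'' from (1) is as intended.

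The genuine gap is in the non-algebraicity half of (2). Your reduction (pass to one word or a finite union, collect the finite set $C$ of constants, and derive a contradiction from a nondiscrete Hausdorff topology on a subgroup containing $C$) coincides with the paper's, but the covering statement you defer --- that $C$ lies in a \emph{topologizable} subgroup --- is precisely the nontrivial content of this clause, and your suggested routes (residual finiteness, or an embedding into a topologizable group) are not substantiated and are implausible here: the natural covering subgroups are themselves quotients of free amalgams by the small-cancellation relators, so their topologizability has to be extracted from that structure. Concretely, the paper takes $\gamma<\kappa$ and $i<\theta$ with $C\subseteq G_{D^\gamma_{\le i}\cup\{\gamma\}}$, recalls that this group is $(G_{D^\gamma_{<i}\cup\{\gamma\}}\ast_{G_{D^\gamma_{<i}}}G_{D^\gamma_{\le i}})/N_0$ where $N_0$ is the normal closure of the relators $h_\sigma^{-1}\varrho(b_\sigma a_\sigma,b'_\sigma a_\sigma)$, and builds a strictly decreasing chain of normal subgroups with intersection $N_0$ by setting $\varrho_\ell(x,y)=\varrho(x^{n_\ell},y^{n_\ell})$ with $n_\ell=6640^\ell$ and letting $N_k$ be the normal closure of the original relators together with all $\varrho_\ell(b_\sigma a_\sigma,b'_\sigma a_\sigma)$ for $\ell\ge k$; the $C'(\frac{1}{10})$ condition is then used to check that $\varrho_k(b_\sigma a_\sigma,b'_\sigma a_\sigma)\in N_k\setminus N_0$ and that elements with canonical form shorter than $n_k$ lying outside $N_0$ stay outside $N_k$. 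The quotients $N_k/N_0$ then form a neighbourhood base at $\one$ for a nondiscrete $T_1$ group topology on $G_{D^\gamma_{\le i}\cup\{\gamma\}}$, which yields the contradiction via your continuity argument. Without an argument of this kind, your proof of the second clause is incomplete.
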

\begin{proof}
(1) This is a standard consequence of the malnormality of $G_\gamma$'s ($\gamma < \kappa$). Suppose on the contrary and fix a nondiscrete Hausdorff topology on $G$, and fix $g \in G$ distinct from $\one$. Then $U_0 = G \setminus \{g\}$ is open, so there is an open neighborhood $U_1$ of $\one$ for which $(U_1)^n \subseteq U_0$,
where $n$ is integer for which $G$ is $n$-Shelah.
 Now if $|U_1| = \kappa$, then $U_1^n = G$, which is a contradiction, so it must be the case that $|U_1| < \kappa$. But then $U_1 \subseteq G_\gamma$ for some $\gamma < \kappa$. Now $G_\gamma \mal G_{\gamma+1}$, so for some $h \notin G_\gamma$ we have $hU_1h^{-1} \cap U_1 = \{\one\}$, an open neighborhood of $\one$, contradicting our assumption that the topology is nondiscrete.
	
(2) We need to show that for no system $w_i$ ($i \in I$) of words over $G \cup \{x\}$ (where $x$ is an abstract variable outside $G$) do we have 
	$$G \setminus \{\one\} = \bigcap\nolimits_{i \in I} \{g \in G \mid \ f_{w_i}(g) = \one\},$$
	where the value of $f_{w_i}(g) \in G$ is given by substituting each occurrence of the symbol $x$  in $w_i \in{}^{<\omega}(G \cup \{x\})$ with $g$, and calculating the value in $G$.
	It is easy to see that it suffices to prove that for no such word $w$ does the following equation holds true:
	\begin{equation} \label{wimpossible} G \setminus \{\one\} =  \{g \in G \mid \ f_{w}(g) = \one\}.\end{equation}

	Suppose that $w$ satisfies \eqref{wimpossible}, and fix a finite set $B \in [G]^{<\aleph_0}$ with $w \in{}^{<\omega}(B \cup \{x\})$. Now there exist $\gamma< \kappa$, $i < \theta$ such that
	$$B \subseteq G_{D^\gamma_{\leq i} \cup \{\gamma\}},$$
	so for each $g \in G_{D^\gamma_{\leq i} \cup \{\gamma\}}$ that is not the identity $f_w(g) =  \one$.
	
	We are going to prove that $G_{D^\gamma_{\leq i} \cup \{\gamma\}}$ is topologizable (with a nondiscrete $T_1$ topology), which will imply that $G_{D^\gamma_{\leq i} \cup \{\gamma\}} \setminus \{\one\}$ is closed, contradicting that the topology was nondiscrete.
	
To this end, it is enough to argue that there exists a sequence $\langle N^*_k \mid \ k \in \omega \rangle$ of normal subgroups of $G_{D^\gamma_{\leq i} \cup \{\gamma\}}$ such that for each $k$ do $N^*_{k+1} \leq N^*_k$, $\bigcap_{k \in \omega} N^*_k = \{\one\}$ and $\{\one\} \lneq N^*_k$ hold.
	Now recall how $G_{D^\gamma_{\leq i} \cup \{\gamma\}}$ was constructed in subsection~\ref{reccons} (appealing to Lemma \ref{shhe} there):
	$$G_{D^\gamma_{\leq i} \cup \{\gamma\}} = (G_{D^\gamma_{< i} \cup \{\gamma\}} \ast_{G_{D^\gamma_{< i}}}  G_{D^\gamma_{\leq  i}}) / N,$$
	where $N$ was the normal closure of $\{ h_\sigma^{-1} \varrho(b_\sigma a_\sigma,b_\sigma'a_\sigma)\mid \sigma \in J\}$ ($J$ is from Definition \ref{relde}).
	Let $N_0$ denote this $N$.
	Observe that it is enough to define a sequence $\langle N_k \mid \ k \in \omega \setminus \{0\}\rangle$ of normal subgroups in $G_{D^\gamma_{< i} \cup \{\gamma\}} \ast_{G_{D^\gamma_{< i}}}  G_{D^\gamma_{\leq  i}}$ that satisfies  $N_{k+1} \leq N_k$ for $k\geq 1$, $\bigcap_{k \in \omega} N_k = N_0$ and $N_0 \lneq N_k$.
	
	We define the sequence $\langle n_\ell\mid \ell \in \omega\rangle$ as $n_\ell = 6640^\ell$,
	let $\varrho_\ell(x,y) = \varrho(x^{n_\ell},y^{n_\ell})$ (so that $\varrho_0 = \varrho$), and 
	$$R_k:=  \{ h_\sigma^{-1}\varrho_0(b_\sigma a_\sigma,b'_\sigma a_\sigma),  \varrho_\ell(b_\sigma a_\sigma,b'_\sigma a_\sigma) \mid \ell \geq k,~ \sigma \in J\}.$$
	Set $N_k$ to be the normal closure of $R_k$.
	Now the following facts will complete the proof:
	\begin{itemize}
		\item if $\sigma \in J$, $k>0$, then 
		$$ G_{D^\gamma_{< i} \cup \{\gamma\}} \ast_{G_{D^\gamma_{< i}}}  G_{D^\gamma_{\leq  i}} \models \ \varrho_k(b_\sigma a_\sigma,b_\sigma'a_\sigma) \in N_k \setminus N_0,$$
		\item $R_1$ satisfies $C'(\frac{1}{10})$, moreover, if the group element $g \in G_{D^\gamma_{< i} \cup \{\gamma\}} \ast_{G_{D^\gamma_{< i}}}  G_{D^\gamma_{\leq  i}}$ has a canonical representation of length $<n_k$ for some $k\geq 1$, and $g \notin N_0$, then $g \notin N_k$.\qedhere
	\end{itemize}
\end{proof}

\begin{corollary}\label{cor513}  For every infinite regular cardinal $\lambda$, there exists a Shelah group of size $\lambda^+$.
\end{corollary}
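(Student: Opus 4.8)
The plan is to obtain Corollary~\ref{cor513} as an immediate specialization of Theorem~\ref{thm41}, fed by the colorings manufactured in Theorem~\ref{successorofregular}. So, fix an infinite regular cardinal $\lambda$ and set $\kappa := \lambda^+$ and $\theta := \lambda$. Since $\lambda$ is infinite and regular and $\lambda^+$ is always regular, the pair $\theta < \kappa$ satisfies the first bullet of Theorem~\ref{thm41}.

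Next I would invoke Theorem~\ref{successorofregular} to produce maps $c : [\lambda^+]^2 \rightarrow \lambda^+ \times \lambda^+$ and $e : [\lambda^+]^2 \rightarrow \lambda$ with $e$ subadditive and enjoying the stated cofinal-recurrence property, and then decompose $c$ into its two coordinate functions $c_0, c_1 : [\kappa]^2 \rightarrow \kappa$, so that $c(\alpha,\beta) = (c_0(\alpha,\beta), c_1(\alpha,\beta))$ for all $\alpha < \beta < \kappa$. It then remains to check that $c_0$, $c_1$, $e$ verify the last two bullets of Theorem~\ref{thm41}. Subadditivity of $e$ is inherited verbatim. For the recurrence clause: given $A \in [\kappa]^\kappa$, Theorem~\ref{successorofregular} supplies a club $D \subseteq \kappa$ such that for every $\delta \in D$, every $\beta \in \lambda^+ \setminus \delta$, every $(\xi_0,\xi_1) \in \delta \times \delta$ and every $i < \lambda$ there are cofinally many $\alpha < \delta$ with $\alpha \in A$, $c(\alpha,\beta) = (\xi_0,\xi_1)$ and $e(\alpha,\beta) > i$. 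Since $A \setminus \delta \subseteq \lambda^+ \setminus \delta$, this in particular holds for every $\beta \in A \setminus \delta$; and ``cofinally many $\alpha < \delta$ with $\alpha \in A$'' is just ``cofinally many $\alpha \in A \cap \delta$'', while unpacking $c(\alpha,\beta) = (\xi_0,\xi_1)$ into $c_0(\alpha,\beta) = \xi_0$ and $c_1(\alpha,\beta) = \xi_1$ reproduces precisely the fourth bullet of Theorem~\ref{thm41}.

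With all four hypotheses in place, Theorem~\ref{thm41} yields a torsion-free Shelah group of size $\kappa = \lambda^+$, which is the group sought. I do not expect any genuine obstacle: all the combinatorial and group-theoretic content already lives in Theorems~\ref{successorofregular} and \ref{thm41}, and the only point requiring (trivial) care is the cosmetic mismatch between the two statements, namely that Theorem~\ref{successorofregular} phrases its recurrence clause using a single coloring into $\lambda^+ \times \lambda^+$ and an unrestricted parameter $\beta$, whereas Theorem~\ref{thm41} is stated with two separate colorings into $\kappa$ and with $\beta$ restricted to $A$; both discrepancies are resolved by the elementary observations above. (As an aside, the verification carried out within the proof of Theorem~\ref{thm41} in fact shows the resulting group is $10120$-Shelah, so this argument simultaneously establishes Theorem~\ref{thma} and, taking $\lambda = \aleph_0, \aleph_1, \aleph_2, \ldots$, the announced Shelah groups of sizes $\aleph_1, \aleph_2, \aleph_3, \ldots$.)
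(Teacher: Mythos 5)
Your proposal is correct and is exactly the paper's argument: the paper proves Corollary~\ref{cor513} by invoking Theorem~\ref{thm41} with $(\kappa,\theta)=(\lambda^+,\lambda)$, using Theorem~\ref{successorofregular}, and your additional remarks (splitting $c$ into coordinates and restricting $\beta$ to $A\setminus\delta$) are just the routine bookkeeping left implicit there.
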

\begin{proof} Invoke Theorem~\ref{thm41} with the pair $(\kappa,\theta)=(\lambda^+,\lambda)$,
using Theorem~\ref{successorofregular}.
\end{proof}
\begin{corollary}  For every regular uncountable cardinal $\kappa$, if $\square(\kappa)$ holds, then there exists a Shelah group of size $\kappa$.
\end{corollary}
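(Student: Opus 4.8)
The plan is to obtain this corollary as an immediate consequence of Theorem~\ref{thm41} and Theorem~\ref{generalcase}: the genuine work --- the amalgamation Lemma~\ref{shhe}, the recursive construction of Section~\ref{sec4}, and the walks-on-ordinals colorings of Section~\ref{infcombinatorics} --- has already been carried out, so what remains is a bookkeeping reduction.

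First, since $\kappa$ is uncountable, I would fix an infinite regular cardinal $\theta<\kappa$; the simplest choice is $\theta=\aleph_0$. Under the standing hypothesis $\square(\kappa)$, Theorem~\ref{generalcase} then applies and yields a coloring $c:[\kappa]^2\to\kappa\times\kappa$ together with a subadditive $e:[\kappa]^2\to\theta$ such that for every $A\in[\kappa]^\kappa$ there is a club $D\s\kappa$ with the property that for all $\delta\in D$, $\beta\in\kappa\setminus\delta$, $(\xi_0,\xi_1)\in\delta\times\delta$ and $i<\theta$, there are cofinally many $\alpha\in A\cap\delta$ with $c(\alpha,\beta)=(\xi_0,\xi_1)$ and $e(\alpha,\beta)>i$. (When $\kappa$ happens to be the successor of a regular cardinal this step also follows from Corollary~\ref{cor513}, which is unconditional; Theorem~\ref{generalcase} absorbs that case internally via Theorem~\ref{successorofregular}.) I would then split $c$ into its two coordinate functions $c_0,c_1:[\kappa]^2\to\kappa$, defined by $c(\alpha,\beta)=(c_0(\alpha,\beta),c_1(\alpha,\beta))$.

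Finally, I would verify that $c_0$, $c_1$, and $e$ meet the hypotheses of Theorem~\ref{thm41}. Subadditivity of $e$ is given. The combinatorial clause required there quantifies $\beta$ only over $A\setminus\delta\s\kappa\setminus\delta$ and asks for cofinally many $\alpha\in A\cap\delta$ with $c_0(\alpha,\beta)=\xi_0$, $c_1(\alpha,\beta)=\xi_1$ and $e(\alpha,\beta)>i$ --- which is precisely a restriction of the displayed property once one rewrites $c=(c_0,c_1)$. Theorem~\ref{thm41} then produces a torsion-free Shelah group of size $\kappa$, which is in particular a Shelah group of size $\kappa$. The only points deserving a moment's attention are that a legitimate $\theta$ exists for every regular uncountable $\kappa$ (it does: $\theta=\aleph_0$ always works) and that the two ``club $D$'' formulations line up after splitting $c$ (they do, verbatim, modulo the harmless strengthening already present in Theorem~\ref{generalcase}); there is no substantive obstacle left at this stage.
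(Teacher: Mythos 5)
Your proposal is correct and matches the paper's proof, which simply combines Theorem~\ref{generalcase} with Theorem~\ref{thm41}; your extra details (choosing $\theta=\aleph_0$ and splitting $c$ into the coordinate colorings $c_0,c_1$) are exactly the routine bookkeeping the paper leaves implicit.
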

\begin{proof} By Theorem~\ref{thm41} together with Theorem~\ref{generalcase}.
\end{proof}

\begin{corollary}\label{cor514}  In G\"odel's constructible universe, for every regular uncountable cardinal $\kappa$, the following are equivalent:
\begin{itemize}
\item there exists a Shelah group of size $\kappa$;
\item $\kappa$ is not weakly compact.
\end{itemize}
\end{corollary}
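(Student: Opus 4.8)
The plan is to establish the two implications separately. The direction from ``$\kappa$ not weakly compact'' to ``a Shelah group of size $\kappa$ exists'' is immediate from the material already in place: by Jensen's theorem recalled in the introduction, in the constructible universe $\square(\kappa)$ holds for every regular uncountable $\kappa$ that is not weakly compact, so this case is closed by invoking the preceding corollary. Nothing beyond Jensen's result is needed here.

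The content is in the converse, which I would prove in $\zfc$ in contrapositive form: a Shelah group of size $\kappa$ witnesses $\kappa\nrightarrow(\kappa)^{2n}_k$ for suitable finite $n$ and $k$, while a weakly compact $\kappa$ satisfies $\kappa\rightarrow(\kappa)^m_k$ for all finite $m,k$. So let $G$ be an $n$-Shelah group, and assume (relabelling) that its underlying set is $\kappa$. I would define $c\colon[\kappa]^{2n}\to k$, for a suitable finite $k$, by letting $c(\{z_1<\cdots<z_{2n}\})$ record, for each function $\sigma\colon\{1,\dots,n\}\to\{1,\dots,2n\}$ and each $j\in\{1,\dots,2n\}$, whether $z_{\sigma(1)}z_{\sigma(2)}\cdots z_{\sigma(n)}=z_j$ holds in $G$. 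The claim to prove is that $c$ has no homogeneous set of size $\kappa$, which then finishes the argument.

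To see this, suppose $Y\in[\kappa]^{\kappa}$ is $c$-homogeneous, with constant value $\mathcal S$, and fix any $y\in Y$. Since $Y\setminus\{y\}$ still has size $\kappa$ and $G$ is $n$-Shelah, one can write $y=a_1a_2\cdots a_n$ with every $a_i\in Y\setminus\{y\}$; let $P$ be the set of distinct values among $a_1,\dots,a_n$, so $y\notin P$ and $|P|\le n$. Padding $P\cup\{y\}$ up to a $2n$-element subset $Z=\{z_1<\cdots<z_{2n}\}$ of $Y$ and writing $y=z_{j^\dagger}$, $a_i=z_{\sigma^\dagger(i)}$, we get $(\sigma^\dagger,j^\dagger)\in c(Z)=\mathcal S$, and — this is the key point — $j^\dagger\notin\operatorname{ran}(\sigma^\dagger)$ because $y\notin P$. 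Homogeneity now forces $z_{\sigma^\dagger(1)}\cdots z_{\sigma^\dagger(n)}=z_{j^\dagger}$ for \emph{every} increasing $2n$-tuple from $Y$. Since $Y$ is infinite and $j^\dagger$ does not occur on the left, one can choose two such tuples that agree on all coordinates in $\operatorname{ran}(\sigma^\dagger)$ but disagree at the coordinate $j^\dagger$; for these the left-hand sides coincide while the right-hand sides do not, a contradiction.

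I expect the real difficulty to be exactly the design of the coloring: it must be finite-valued — so one cannot record the group elements $z_{\sigma(1)}\cdots z_{\sigma(n)}$ themselves, only their coincidences with the finitely many sampled entries $z_1,\dots,z_{2n}$ — while still being incompatible with $X^n=G$ on a homogeneous set. The device that makes this possible is to apply the Shelah property to $Y\setminus\{y\}$ rather than to $Y$ and then re-insert $y$ into the sampled tuple, which manufactures a ``diagonal'' identity $z_{\sigma^\dagger(1)}\cdots z_{\sigma^\dagger(n)}=z_{j^\dagger}$ in which the distinguished index does not appear on the left-hand side. This is the uncountable incarnation of the elementary non-existence argument for countable $n$-Shelah groups mentioned (and footnoted) in the introduction.
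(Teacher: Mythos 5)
Your proposal is correct, and its first half (Jensen's theorem plus the preceding corollary) is exactly the paper's. The nontrivial half — no Shelah group at a weakly compact $\kappa$ — is also proved in the paper via a finite coloring induced by the group operation, but the coloring and the final contradiction are genuinely different from yours. The paper colors $n$-tuples: for each of the $n^n$ word patterns $\sigma:n\to n$ it records the value $f_\sigma(u)$ of the corresponding product, but truncated to the finite set $n^n+1$ (any value $\geq n^n$ is collapsed to the single color $n^n$); on a homogeneous $Z$ the single constant color is a sequence of length $n^n$ with entries in $n^n+1$, so some ordinal $m\le n^n$ is never attained as a value, whence $m\notin Z^n$, contradicting $Z^n=\kappa$. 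You instead color $2n$-tuples by the full pattern of coincidences $z_{\sigma(1)}\cdots z_{\sigma(n)}=z_j$, and get the contradiction by the remove-and-reinsert trick: applying the Shelah property to $Y\setminus\{y\}$ produces a pattern $(\sigma^\dagger,j^\dagger)$ with $j^\dagger\notin\operatorname{ran}(\sigma^\dagger)$, and varying only the $j^\dagger$-th coordinate (legitimate since $Y$ has size $\kappa$) gives two tuples with equal left-hand sides and distinct right-hand sides. Both arguments are sound and both invoke the same standard fact that weak compactness yields $\kappa\to(\kappa)^m_k$ for finite $m,k$. The paper's version is slightly leaner (exponent $n$, no need to compare different tuples, a one-line pigeonhole at the end), at the cost of the somewhat ad hoc truncation device and of using the identification of the underlying set with $\kappa$ to name the candidate missed values; your version needs exponent $2n$ and a larger color set, but the contradiction is purely equational, uses only a linear ordering of $G$, and specializes directly to the classical Ramsey-theoretic argument against countable $n$-Shelah groups footnoted in the introduction.
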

\begin{proof} By \cite{jensen72}, in G\"odel's constructible universe, every regular uncountable $\kappa$ is either weakly compact, or $\square(\kappa)$ holds. 
Therefore it suffices to prove that weakly compact cardinals do not carry a Shelah group. 
To this end, suppose that $G$ is an $n$-Shelah group with underlying set $\kappa$. Clearly, the group operation gives rise to a system $\langle f_{j}: [\kappa]^n \to \kappa \mid j<n^n \rangle$ satisfying that for every $Z\in[\kappa]^\kappa$, $$\bigcup_{j<n^n} f_j``[Z]^n = \kappa.$$

For every $j<n^n$, define $g_j:[\kappa]^n\rightarrow(n^n+1)$ via:
$$g_j(u):=\begin{cases}
f_j(u),&\text{if }f_j(n)<n^n;\\
n^n,&\text{otherwise}.
\end{cases}$$

 Define $c:[\kappa]^n\rightarrow{}^{n^n}(n^n+1)$ via:
 $$c(u)=\langle g_j(u)\mid j<n^n\rangle.$$

Finally, applying the weak compactness of $\kappa$, we find a set $Z \in [\kappa]^\kappa$ that is $c$-homogeneous.
Pick an $m\in n^n+1$ distinct from all elements of $c``[Z]^n$, since it is a sequence of length $n^n$.
Then $m$ is not in $Z^n$, which was supposed to be whole of $\kappa$, being the underlying set of some $n$-Shelah group.
\end{proof}

\section{Acknowledgments}

The first author was supported by the Excellence Fellowship Program for International Postdoctoral Researchers of The Israel Academy of Sciences and Humanities, and by the National Research, Development and Innovation Office – NKFIH, grants no. 124749, 129211. 
The second author was partially supported by the Israel Science Foundation (grant agreement 203/22)
and by the European Research Council (grant agreement ERC-2018-StG 802756).

The results of this paper were presented by the first author at the \emph{Toronto Set Theory seminar} in May 2023.
We thank the organizers for the opportunity to speak and the participants for their feedback.

\end{document}